\newtheorem{thm}{Theorem}[section]
\newtheorem{lem}[thm]{Lemma}
\newtheorem{cor}[thm]{Corollary}
\newtheorem{re}[thm]{Remark}
\newtheorem{prop}[thm]{Proposition}
\newtheorem{prob}{Problem}
\newtheorem*{claim}{Claim}
\newcommand{\diam}{{\rm diam}}
\title{\bf \large On graphs with exactly one anti-adjacency eigenvalue and beyond\footnote{The first two authors are supported by National Natural Science Foundation of China (No. 11971274) and the third author is supported by National Natural Science Foundation of China (No. 11771247).}}
\author{
{\small Jianfeng Wang$^{a,}$\footnote{Corresponding author.
\newline{\it \hspace*{5mm}Email addresses:} jfwang@sdut.edu.cn (J.F. Wang),
xyleiyuki@aliyun.com (X. Lei), lumei@tsinghua.edu.cn (M. Lu), srgnrzs@gmail.com (S. Sorgun), hakankucuk1979@gmail.com (H. K\"{u}\c{c}\"{u}k).}\;, \ \ Xingyu Lei$^{a}$, \ \ Mei Lu$^{b}$, \ \  Sezer Sorgun$^{c}$, \ \ Hakan K\"{u}\c{c}\"{u}k$^{c}$}\\[2mm]
\footnotesize $^a$School of Mathematics and Statistics, Shandong University of Technology, Zibo 255049, China\\
\footnotesize $^b$Department of Mathematical Sciences, TsingHua University, Beijing 100084, China\\
\footnotesize $^c$Department of Mathematics, Nev\c{s}ehir Hac{\i} Bekta\c{s} Veli University, Turkey}
\date{ }
\begin{document}
\maketitle
\begin{abstract}
The anti-adjacency matrix of a graph is constructed from the distance matrix of a graph by keeping each row and each column only the largest distances. This matrix can be interpreted as the opposite of the adjacency matrix, which is instead constructed from the distance matrix of a graph by  keeping in  each row and each column only the distances equal to 1. The (anti-)adjacency eigenvalues of a graph are those of its (anti-)adjacency matrix. Employing
a novel technique introduced by Haemers [Spectral characterization of mixed extensions of small graphs, Discrete Math. 342 (2019) 2760--2764], we characterize all connected graphs with exactly one positive anti-adjacency eigenvalue, which is an analog of Smith's classical result that a connected graph with exactly one positive adjacency eigenvalue iff it is a complete multipartite graph. On this basis, we identify the connected graphs with all but at most two anti-adjacency eigenvalues equal to $-2$ and $0$. Moreover, for the anti-adjacency matrix we determine the HL-index of graphs with exactly one positive anti-adjacency eigenvalue, where the HL-index  measures how large in absolute value may be the median  eigenvalues of a graph. We finally propose some problems for further study.\\

\noindent {\it AMS classification:} 05C50\\[1mm]
\noindent {\it Keywords}: Mixed extension;  Anti-adjacency matrix; Eccentricity matrix; Eigenvalues; HL-index.
\end{abstract}

\baselineskip=0.202in

\section{\large Introduction}

To study the graphs determined by the spectrum of  adjacency matrices, Haemers \cite{Haem} introduced a useful operation on a graph named as {\it mixed extension}. Consider a graph $G$ with vertex set $\{1,\ldots, n\}$. Let $V_1,\dots,V_n$ be mutually disjoint nonempty finite sets. We define a graph
$H$ with vertex set the union of $V_1,\ldots, V_n$ as follows. For each $i$, the vertices of $V_i$ are either all mutually adjacent ($V_i$ is a clique), or all mutually nonadjacent ($V_i$ is a coclique). When $i \neq j$, a vertex of $V_i$ is adjacent to a vertex of $V_j$ if and only if $i$ and
$j$ are adjacent in $G$. We call $H$ a {\it mixed extension} of $G$. We represent a mixed extension by an $n$-tuple $(t_1, \ldots, t_n)$ of nonzero integers, where $t_i > 0$ indicates that $V_i$ is a clique of order $t_i$, and $t_i < 0$ means that $V_i$ is a coclique of order $-t_i$. Haemers's definition is more convenient and powerful, which is motivated by a concrete question for which the pineapple graphs are determined by the adjacency  spectra \cite{top-pin2}. We refer to \cite{Haem,Haem1} for basic results on mixed extensions and to \cite{bro-hae} for graph spectra.

Along with other techniques, Haemers \cite{Haem} determined  all graphs with at most three eigenvalues unequal to 0 and $-1$, consisting of all mixed extensions of graphs on at most three vertices together with some particular mixed extensions of the paths $P_4$ and $P_5$. Subsequently, Haemers et al. \cite{Haem1} investigated the mixed extension of $P_3$ on being determined by the adjacency spectrum and presented several cospectral families. Comparatively, Cioab\v{a} et al. \cite{Cioa1} provided another method  for constructing graphs with all but two eigenvalues equal to $\pm 1$. Moreover, Cioab\v{a} et al. \cite{Cioa2}  identified the graphs with all but two eigenvalues equal to $-2$ or 0. For the  distance matrices of graphs, an approach has been successful for the graphs with exactly two distance eigenvalues different from $-1$ and $-2$ \cite{xueyi} or $-1$ and $-3$ \cite{Lulu}.

In this paper, one will see that the mixed extension of a graph has new applications to the so-called {\it anti-adjacency matrix} of graphs. We  only consider finite, simple and connected graphs. Let $G = (V(G), E(G))$ be a graph with order $|V(G)| =n$. The {\it distance} $d_G(v,w)$  between two vertices $v$ and $w$ is the minimum length of the paths joining them. The {\it diameter} of $G$, denoted by dim$(G)$, is the greatest distance between any two vertices in $G$. The {\it eccentricity} $\varepsilon_G(u)$ of the vertex $u \in V(G)$ is given by $\varepsilon_G(u)= \max\{d(u, v) \mid v \in V(G)\}$. Then
the {\it anti-adjacency matrix} (or {\it eccentricity matrix}) $\mathcal{A}(G)=(\epsilon_{uv})$ of $G$ are defined as follows \cite{wang-dam-anti}:
\begin{equation}\label{A-defi}
\epsilon_{uv}=\left\{
   \begin{array}{ll}
    d_G(u,v)  &\mbox{if}\phantom{o} d_G(u,v)= \min\{\varepsilon_G(u),\varepsilon_G(v)\},\\
     0        &\mbox{otherwise}.
    \end{array}
    \right.
\end{equation}
By comparing the definitions, it turns out that  $\mathcal{A}(G)$ is equal to the $D_{\rm MAX}$-matrix introduced by Randi\'c in \cite{Rand} as a tool for Chemical Graph Theory. Anyway, since the importance of vertex-eccentricity is not limited to applications to chemistry,  the author asserted that such matrix might open new directions of exploration in other branches of graph theory as well.

The matrix  $\mathcal{A}(G)$ is constructed from the distance matrix by only keeping the largest distances for each row and each column, whereas the remaining entries become null. That is why  $\mathcal{A}(G)$ can be interpreted as the opposite of the adjacency matrix,
which is instead constructed from the distance matrix by keeping only distances equal to $1$ on each row and each column.
From this point of view, $A(G)$ and $\mathcal{A}(G)$ are extremal among all possible
distance-like matrices.  As a contrast with $A(G)$, the anti-adjacency matrix has some fantastic properties, one of which is that $\mathcal{A}(G)$ of a  connected graph is not necessarily irreducible. See the published papers \cite{he,mah-gur-raj-aro,wang-li-CILS,wang-dam-anti,wang-dam-spec,wei-he-li} and the arXiv preprints \cite{mah-gur-raj-aro-1,tura} for more results about this newer matrix.

We next introduce some notations borrowed from spectral graph theory. The {\it $\mathcal{A}$-polynomial} of $G$ is defined as $\phi(G,\lambda) = \det(\lambda{I} - \mathcal{A}(G))$, where $I$ is the identity matrix. The roots of the $\mathcal{A}$-polynomial are the  {\it $\mathcal{A}$-eigenvalues} and the {\it
$\mathcal{A}$-spectrum}, denoted also by ${\rm Spec}_{\mathcal{A}}(G)$, of $G$ is  the
multiset consisting of the $\mathcal{A}$-eigenvalues. Since $\mathcal{A}(G)$ is symmetric, the $\mathcal{A}$-eigenvalues are real. Let $\xi_1 \geq
\xi_2 \geq \cdots \geq \xi_n$ be the anti-adjacency eigenvalues of a graph with order $n$. If $\xi_1' >
\xi_2' > \cdots > \xi_k'$ are all distinct $\mathcal{A}$-eigenvalues, then the
$\mathcal{A}$-spectrum can be written as
$$
{\rm Spec}_{_{\mathcal{A}}}(G) = \{\xi_1,\xi_2, \ldots, \xi_n\} =
 \left\{\!\!\begin{array}{cccc}
                                 \xi_1' & \xi_2' & \cdots & \xi_k' \\
                                 m_1   &  m_2           & \cdots &  m_k
                                \end{array}
                         \!\!\right\},
$$
where $m_i$ is the algebraic multiplicity of the eigenvalue $\xi_i'$ ($1 \leq i \leq k$).

For any  graph matrix $M$,  the {\it $M$-cospectral graphs} are non-isomorphic graphs with the same $M$-spectrum. We say that $G$ is {\it determined by the $M$-spectrum} if  no $M$-cospectral graphs of $G$ exist. Usually, $M$ is the adjacency, or the Laplacian, or the anti-adjacency matrices and so on.

The most important reason why the authors \cite{wang-lu-0} tended to build a spectral theory based on the anti-adjacency matrix is that they tried to detect the
proportion of cospectral graphs relating to two famous conjectures: One is that almost all graphs are adjacency cospectral posed by Schwenk \cite{sch-almost};
the other is that almost all graphs are determined by the adjacency (or Laplacian) spectrum formally proposed by Haemers \cite{hae-conj}. In their paper, the
authors \cite{wang-lu-0}  showed that, when $n \rightarrow \infty$, the fractions of non-isomorphic cospectral graphs with respect to the adjacency and the
anti-adjacency matrix behave like those only concerning the self-centered graphs with diameter two. Moreover,  they also obtained that the connected graphs have
exactly two distinct $\mathcal{A}$-eigenvalues iff they are $r$-antipodal graphs, which could be used to construct much more $\mathcal{A}$-cospectral graphs.

Recall, a classical result, due to Smith \cite[Theorem 6.7]{cve-book}, is that  a connected graph has exactly one positive adjacency eigenvalue if and only if $G$ is a complete multipartite graph $K_{n_1,n_2,\cdots,n_t}$, which is just the mixed extension  of complete graph $K_t$ of type $(-n_1, -n_2, \ldots, -n_t)$. Contrastively, we will determine the graphs with exactly one positive anti-adjacency eigenvalue, which is the mixed extension of star $K_{1,k+1}$ in Theorem \ref{E-main-1-one}. Based on this result, we classify the graphs with all but two anti-adjacency eigenvalues equal to $-2$ and $0$ in Theorem \ref{E-main-2-two}.
Relatively,  Cioab\u{a} et.al \cite{Cioa2} determined all connected graphs for which adjacency  matrices have at most two eigenvalues  not equal to $-2$ and $0$, which are different from the graphs in the above theorem. In  their classification, there are thirteen families of such graphs. Additionally, for the adjacency matrix, the multiplicity of eigenvalue 0 is well-known as the {\it nullity} of a graph that has been studied widely; while it was a hot topic to determined the graphs with the smallest eigenvalue at least $-2$, which has been summarized in \cite{cve-book-2}. For the anti-adjacency matrix, the graphs with the smallest $\mathcal{A}$-eigenvalue at least $-2$ was characterized in \cite{wang-li-CILS}.

Fowler and Pisanski [16,17] introduced the notion of the HL-index of a graph w.r.t. the adjacency matrix. It is related to the HOMO-LUMO separation studied in theoretical chemistry. Similarly,  the {\it {\rm HL}-index} $R_{\mathcal{A}}(G)$ w.r.t.  anti-adjacency matrix of a (molecular) graph $G$ of order $n$  is defined as $$R_{\mathcal{A}}(G) = \max\{|\xi_H|, |\xi_L|\},$$ where $H = \lfloor\frac{n+1}{2}\rfloor$, $L = \lceil\frac{n+1}{2}\rceil$. See \cite[eg.]{mohar-HL1,mohar-HL2,mohar-HL3} for more details about the  HL-index w.r.t. adjacency matrices of graphs. Actually, for the anti-adjacency matrix, by cumbersome calculations we can completely determine the HL-index of graphs with exactly one positive anti-adjacency eigenvalue.

In order to state the following main results in this paper, we describe the mixed extensions of a star. Let $K_{1,k+1}$  be a star with the vertex $v_0$ of degree $k+1$ and the other vertices $v_{1},v_{2},\ldots,v_{k+1}$.  We represent by  $S(t_0,t_1, \ldots, t_k)$ the mixed extension of the star $K_{1,k+1}$ of the type $(t_0, t_1, \ldots, t_k)$.
If $t_1 \geq t_2 \geq \cdots \geq t_{q} \geq 2 > t_{q+1} = \cdots = t_k = 1$, then $S(t_0,t_1, \ldots, t_k)\cong S(t_0, -p, t_1,\cdots, t_q)$, where $0 \leq  p, q \leq k$ $(p = k - q)$ and
$t_j \geq 2$ $(1 \leq j \leq q)$. It is worth noting that each $t_i\geq 2$ ($i=1,2,\cdots, k$) if  $p=0$.

\begin{thm}\label{E-main-1-one}
A connected graph $G$ has exactly one  positive $\mathcal{A}$-eigenvalue if and only if $G$ is the mixed extension of star $S_{1,q+1}$ of type $(t_0, -p, t_1, \ldots, t_q)$ with $p,q\geq 0$ and $t_j \geq 2$ ($1 \leq j \leq q$), where
\begin{itemize}
\item[$\mathrm{(i)}$]
$t_0 = 1$, $p +q \geq 1$;
\item[$\mathrm{(ii)}$]
$t_0 = 2$, $p, q \geq 0$;
\item[$\mathrm{(iii)}$]
$t_0 = 3$, $0 \leq q \leq 4 - p$ $(0 \leq p \leq 4)$;
\item[$\mathrm{(vi)}$]
$t_0 = 4$, $0 \leq q \leq 3 - p$ $(0 \leq p \leq 3)$;
\item[$\mathrm{(v)}$]
$t_0 \geq 5$, $0 \leq q \leq 2 - p$ $(0 \leq p \leq 2)$.
\end{itemize}
\end{thm}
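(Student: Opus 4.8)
The plan is to analyze the anti-adjacency matrix $\mathcal{A}(G)$ of a general mixed extension $S(t_0,-p,t_1,\dots,t_q)$ of a star and then to invoke interlacing to pin down exactly when it has a single positive eigenvalue. First I would record the eccentricity structure of these graphs: the center-class $V_0$ (clique of order $t_0$, with $t_0$ counted so that $p+q\geq 1$ keeps the graph nontrivial) has eccentricity $1$ when $t_0\geq 2$ and eccentricity $2$ when $t_0=1$, while every ``leaf'' class $V_i$ ($i\geq 1$) and the coclique class (the $-p$ part) has eccentricity $2$, since two leaves in distinct classes are at distance $2$. From this one writes down $\mathcal{A}(G)$ explicitly as a block matrix: within each class the block is either $0$ (coclique) or $2(J-I)$ rescaled appropriately — actually the diagonal blocks record distance-$2$ pairs inside a coclique class and distance-$1$ pairs are suppressed unless they realize the eccentricity, so some care is needed — and the off-diagonal blocks between two leaf classes are $2J$, between the center class and a leaf class are $0$ if $t_0\geq 2$ (distance $1$ is not the eccentricity of either endpoint once $t_0\geq 2$) but are (partly) nonzero when $t_0=1$. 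The key structural observation is that $\mathcal{A}(G)$, after collapsing each class to a single representative via equitable-partition / quotient-matrix arguments, is controlled by a small quotient matrix $B$ whose size is $q+2$ or so, and $\mathcal{A}(G)$ has exactly one positive eigenvalue iff $B$ does (the remaining eigenvalues coming from the ``within-class'' differences are all $\leq 0$, typically $0$ or $-2$).

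Next I would carry out the sufficiency direction: for each of the five regimes (i)–(v) on $t_0$, compute the quotient matrix $B$ and verify it has exactly one positive eigenvalue, e.g. by checking that $-B$ (or $B$ restricted to the relevant subspace) is positive semidefinite plus a rank-one correction, or by directly exhibiting that $B$ has the sign pattern of a matrix congruent to $\mathrm{diag}(+,-,\dots,-)$. The bookkeeping that produces the inequalities $t_0=3\Rightarrow q\leq 4-p$, $t_0=4\Rightarrow q\leq 3-p$, $t_0\geq 5\Rightarrow q\leq 2-p$ should fall out of requiring a single leading principal minor sign change; concretely, larger $t_0$ inflates one diagonal entry of $B$ and forces the number of leaf-classes-plus-coclique $p+q$ to stay small so that no second positive eigenvalue appears. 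For necessity, I would argue the contrapositive by interlacing: if $G$ is connected with exactly one positive $\mathcal{A}$-eigenvalue then, by Theorem~\ref{E-main-1-one}'s counterpart for adjacency (Smith) one expects $G$ to have no induced subgraph whose $\mathcal{A}$-matrix has two positive eigenvalues; identifying a short list of ``forbidden'' small induced configurations (a few mixed extensions of $P_4$, $P_5$, or of a triangle with too many pendant classes) and showing each has two positive $\mathcal{A}$-eigenvalues, then checking that avoiding all of them forces $G$ into the stated families, completes the classification. This is essentially Haemers's technique referenced in the abstract, adapted to $\mathcal{A}$.

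The main obstacle will be the necessity direction — in particular, proving that a connected graph with one positive $\mathcal{A}$-eigenvalue must itself be a mixed extension of a star (rather than of some other small graph). Unlike the adjacency case, $\mathcal{A}(G)$ need not be irreducible and its entries are $0$ or $2$ (for diameter-$2$ graphs) or larger, so the ``eigenvalue $+1$ of adjacency'' intuition does not transfer verbatim; I expect to need a separate lemma showing that one positive $\mathcal{A}$-eigenvalue forces $\operatorname{diam}(G)\leq 2$ (or a tightly controlled diameter-$3$ exception), and then a Ramsey-type / forbidden-subgraph argument on diameter-$2$ graphs to extract the star structure. A secondary technical nuisance is that the quotient-matrix reduction must correctly handle the boundary cases $t_0=1$ (where $V_0$ has eccentricity $2$ and contributes differently), $p=0$ (no coclique class, every $t_i\geq 2$), and $q=0$ (a ``pineapple''-like graph $S(t_0,-p)$), so the five-case split in the statement is really forced by where the sign change in the characteristic polynomial of $B$ occurs, and I would verify each endpoint by hand.
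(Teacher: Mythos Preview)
Your overall strategy---show $\operatorname{diam}(G)\leq 2$, use forbidden-subgraph/interlacing arguments to force the star structure, then compute the characteristic polynomial of the quotient matrix to verify sufficiency and extract the parameter constraints---is exactly the route the paper takes. The paper's specific forbidden configurations for the star structure are $P_4$, $C_4$, and $P_3\cup K_1$ (ruled out once $\operatorname{diam}=2$), followed by an induction on $|V(G)|$; the parameter bounds in (iii)--(v) come from a finite list $F_1,\dots,F_{12}$ of small mixed extensions whose second $\mathcal{A}$-eigenvalue is positive.

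However, your description of $\mathcal{A}(G)$ is wrong in a way that would derail the quotient-matrix computation. The center vertices in $S(t_0,-p,t_1,\dots,t_q)$ are adjacent to \emph{every} other vertex, so a center vertex always has eccentricity $1$ (not $2$ when $t_0=1$). Consequently, for a center vertex $u$ and a leaf vertex $v$ one has $d(u,v)=1=\min\{\varepsilon(u),\varepsilon(v)\}$, so the center--leaf block of $\mathcal{A}(G)$ is $J$, not $0$; and the center--center block is $J-I$. The paper's explicit block form is
\[
\mathcal{A}(G)=\begin{pmatrix} J-I & J & J & \cdots & J\\ J & 2(J-I) & 2J & \cdots & 2J\\ J & 2J & 0 & \cdots & 2J\\ \vdots & \vdots & \vdots & \ddots & \vdots\\ J & 2J & 2J & \cdots & 0\end{pmatrix},
\]
with the $-1$'s from $J-I$ producing the eigenvalue $-1$ with multiplicity $t_0-1$ (not the $0$'s you predicted). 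This is precisely what makes the thresholds $p+q\leq 4,3,2$ for $t_0=3,4,\geq 5$ appear: the constant term of the reduced polynomial is $2^h(qt_0+pt_0-2t_0-2q-2p+2)\prod t_i$, and the sign change at $0$ occurs exactly at those boundaries.

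A second point you glossed over: interlacing for $\mathcal{A}$ is not automatic, because $\mathcal{A}(H)$ is \emph{not} in general the principal submatrix of $\mathcal{A}(G)$ indexed by $V(H)$ when $H\subseteq G$. The paper handles this with a separate lemma giving sufficient conditions ($\varepsilon_H(u)=\varepsilon_G(u)$ and $d_H(u,v)=d_G(u,v)$ for all $u,v\in V(H)$), and in the diameter bound it works directly with carefully chosen principal submatrices of $\mathcal{A}(G)$ rather than with $\mathcal{A}$-matrices of induced subgraphs. Your forbidden-subgraph argument needs this distinction made explicit.
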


\begin{thm}\label{E-main-2-two}
Let $G$ be a connected graph. Then
\begin{itemize}
\item[{\rm (i)}]
 No graph has exactly one $\mathcal{A}$-eigenvalue different from $0$ and $-2$.
\item[{\rm (ii)}]
The graph with all but two $\mathcal{A}$-eigenvalues equal to $-2$ and $0$ if and only if $G$ is the mixed extension of star $S_{1,1}$ of type $(-t_0, -t_1)$, where $t_0, t_1 \geq 1$.
\end{itemize}
\end{thm}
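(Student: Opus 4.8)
The plan is to establish both directions of (ii), with (i) coming out at the end. For the ``if'' direction one just computes the anti-adjacency spectrum of the mixed extension of $S_{1,1}$ of type $(-t_0,-t_1)$, which is the complete bipartite graph $K_{t_0,t_1}$. If $t_0,t_1\ge 2$ then $K_{t_0,t_1}$ is self-centred of diameter $2$ and $\epsilon_{uv}=2$ exactly when $u,v$ lie in the same part, so $\mathcal A(K_{t_0,t_1})=2\bigl((J_{t_0}-I_{t_0})\oplus(J_{t_1}-I_{t_1})\bigr)$, with spectrum $\{2(t_0-1),\,2(t_1-1),\,(-2)^{n-2}\}$; if $t_0=1$ the centre is the unique vertex of eccentricity $1$, and an equitable-partition computation gives spectrum $\{\mu_+,\,\mu_-,\,(-2)^{t_1-1}\}$ with $\mu_\pm=(t_1-1)\pm\sqrt{(t_1-1)^2+t_1}$, where $\mu_-\notin\{0,-2\}$; the case $t_0=t_1=1$ is $K_2$. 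In every case exactly two eigenvalues lie outside $\{0,-2\}$, so (ii) holds in that direction and, in passing, no such graph ever has only one eigenvalue outside $\{0,-2\}$.

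For the converse, let $G$ be connected with at most two $\mathcal A$-eigenvalues outside $\{0,-2\}$. Since $\operatorname{tr}\mathcal A=0$ and $\mathcal A\ne 0$ (a pair $u,v$ with $d_G(u,v)=\diam G$ has $\varepsilon_G(u)=\varepsilon_G(v)=\diam G$, so $\epsilon_{uv}=\diam G\ne 0$), $G$ has exactly one or two positive $\mathcal A$-eigenvalues. I would first show $\diam G\le 2$: if $D:=\diam G\ge 3$, the $2\times 2$ principal submatrix on a diametral pair is $\bigl(\begin{smallmatrix}0&D\\ D&0\end{smallmatrix}\bigr)$, so by Cauchy interlacing $\xi_1\ge D\ge 3$ and $\xi_n\le -D\le -3$; these are two eigenvalues outside $\{0,-2\}$, hence the only two, so $G$ has exactly one positive $\mathcal A$-eigenvalue and by Theorem~\ref{E-main-1-one} is a mixed extension of a star --- but all those graphs have diameter $\le 2$, a contradiction (alternatively, a $4\times 4$ submatrix along a shortest path reproducing $\mathcal A(P_4)$, whose spectrum is $\{4,1,-1,-4\}$, exhibits four such eigenvalues at once).

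Now assume $\diam G\le 2$. If $\diam G=1$ then $G=K_n$ and $\mathcal A(K_n)=J-I$ has $n$ eigenvalues outside $\{0,-2\}$, forcing $n=2$ and $G=K_2$. If $\diam G=2$, let $D_0$ be the set of dominating vertices (eccentricity $1$), $p_0=|D_0|$, $G'=G-D_0$, $H=\overline{G'}$. Because $\epsilon_{xy}=2$ if $x\not\sim y$ and $\epsilon_{xy}=0$ if $x\sim y$ for $x\ne y$ outside $D_0$, while all other entries in the $D_0$-rows equal $1$, $\mathcal A$ has the block form $\bigl(\begin{smallmatrix}J_{p_0}-I_{p_0}&J\\ J&2A(H)\end{smallmatrix}\bigr)$. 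The vectors on $D_0$ summing to $0$ are $\mathcal A$-eigenvectors for $-1$, so $-1\notin\{0,-2\}$ has multiplicity $\ge p_0-1$; with the positive eigenvalue this gives $\ge p_0$ eigenvalues outside $\{0,-2\}$, hence $p_0\le 2$. For $p_0=0$, $\mathcal A=2A(\overline G)$, so $\overline G$ has at most two adjacency eigenvalues outside $\{0,-1\}$; any component of $\overline G$ with all eigenvalues in $\{0,-1\}$ has trace $0$ hence no edge, i.e.\ is $K_1$, and $p_0=0$ rules out isolated vertices, so $\overline G$ has one or two non-trivial components, each contributing its positive Perron value; the one-component case is impossible because then $G=\overline{\overline G}$ is connected with connected complement, hence $\overline G$ contains an induced $P_4$ and so (interlacing with $P_4$'s spectrum $\frac{\pm1\pm\sqrt5}{2}$) has at least three adjacency eigenvalues outside $\{0,-1\}$; thus $\overline G$ has two non-trivial components, each contributing exactly one such eigenvalue and hence a clique, so $\overline G=K_{t_0}\cup K_{t_1}$ with $t_0,t_1\ge 2$ and $G=K_{t_0,t_1}$. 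For $p_0=2$, the $-1$-eigenvalue and a positive eigenvalue force exactly one positive eigenvalue, so Theorem~\ref{E-main-1-one} applies with $t_0=2$, and a spectral computation for that family (quotient matrix plus $(-2)^{p-1}$ and $0^{\sum(t_j-1)}$) gives at least three eigenvalues outside $\{0,-2\}$ when the diameter is $2$, a contradiction. For $p_0=1$ with exactly one positive eigenvalue, Theorem~\ref{E-main-1-one} gives $G=S(1,-p,t_1,\dots,t_q)$, and the analogous computation shows at least three eigenvalues outside $\{0,-2\}$ unless $q=0$, whence $G=S(1,-p)=K_{1,p}$. This proves that a connected graph with at most two $\mathcal A$-eigenvalues outside $\{0,-2\}$ is a $K_{t_0,t_1}$; and (i) follows, since a graph with exactly one eigenvalue outside $\{0,-2\}$ has spectrum $\{2a,(-2)^a,0^{\,n-1-a}\}$ with $a\ge 1$ by $\operatorname{tr}\mathcal A=0$ and $\mathcal A\ne 0$, hence one positive eigenvalue, so it is one of the $K_{t_0,t_1}$ above --- each of which has exactly two eigenvalues outside $\{0,-2\}$.

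The hard part will be the last open case: $\diam G=2$, $p_0=1$, with two positive $\mathcal A$-eigenvalues, which Theorem~\ref{E-main-1-one} does not reach. Here both exceptional eigenvalues are positive, so $\mathcal A^2+2\mathcal A$ is positive semidefinite of rank $\le 2$; its $D_0$-block equals $nJ_{p_0}-I_{p_0}$, nonsingular (re-proving $p_0\le 2$), and the remaining entries of $\mathcal A^2+2\mathcal A$ are explicit functions of the degrees, adjacencies and common neighbourhoods in $H$. I expect the argument to force this explicit matrix to be positive semidefinite of rank exactly $2$ --- which already excludes a universal vertex of $H$ and imposes strong-regularity-type identities --- and then, matching the moment equations for $\operatorname{tr}\mathcal A^2$ and $\operatorname{tr}\mathcal A^3$, to pin $H$ down and produce the required third eigenvalue outside $\{0,-2\}$. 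This bookkeeping, together with the small case analysis for the $p_0=0$ complements and the $t_0=2$ family, is the ``cumbersome calculation'' alluded to in the introduction; everything else is routine linear algebra on the block forms above.
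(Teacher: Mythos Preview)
Your argument is mostly sound and in places cleaner than the paper's. In particular, your treatment of the $p_0=0$ case via $\mathcal A(G)=2A(\overline G)$ and the cograph/interlacing trick (both $G$ and $\overline G$ connected $\Rightarrow$ induced $P_4$ in $\overline G$ $\Rightarrow$ three adjacency eigenvalues outside $\{0,-1\}$) is more self-contained than the paper, which instead routes the entire two-positive case through the externally proved classification of graphs with least $\mathcal A$-eigenvalue $-2$ (Lemma~\ref{least}). Your derivation of $p_0\le 2$ from the $-1$-eigenspace, and the reductions of $p_0=2$ and of $p_0=1$ with a single positive eigenvalue to Theorem~\ref{E-main-1-one} plus Proposition~\ref{ES-eigenvalue}, are correct and parallel the paper's Lemma~\ref{one}.

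The genuine gap is exactly the case you yourself flag as ``the hard part'': $\diam G=2$, $p_0=1$, two positive $\mathcal A$-eigenvalues. Your plan --- exploit that $\mathcal A^2+2\mathcal A$ is positive semidefinite of rank at most $2$ and extract strong-regularity-type constraints on $H=\overline{G-c}$ --- is reasonable in principle but is left as a sketch; you never actually pin $H$ down or exhibit a third exceptional eigenvalue. The paper closes this case in a few lines, and the key idea you are missing is precisely Lemma~\ref{least}: since here $\xi_n=-2$, that lemma forces $G\cong K_1\vee K_{n_1,\ldots,n_l}$ with $n_r\ge 2$ and $2\le l\le 4$. One then has the explicit factorisation of $\phi(G,\lambda)$ from Lemma~\ref{poly}, and a short sign-change analysis of the degree-$(l+1)$ factor $f_{1,n_1,\ldots,n_l}(\lambda)$ at the points $-2,-1,2n_l-2,\ldots,2n_1-2$ exhibits at least three roots outside $\{0,-2\}$ for each $l\in\{2,3,4\}$. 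This replaces your proposed rank-$2$ bookkeeping with a direct computation on a three-parameter family.

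One small correction: your alternative route to $\diam G\le 2$ via ``a $4\times 4$ submatrix along a shortest path reproducing $\mathcal A(P_4)$'' does not work as stated, since the eccentricities of those four vertices in $G$ need not equal their eccentricities in $P_4$, so the principal submatrix of $\mathcal A(G)$ is not $\mathcal A(P_4)$ in general (compare the case analysis in Lemma~\ref{pos-diam-2}). Your primary argument --- interlace with the $2\times2$ diametral block and then invoke Theorem~\ref{E-main-1-one} --- is valid and is the one to keep.
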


Write the set $\{t_1, t_2,  \cdots, t_q\}$ as the multiset  $\{k_1 \cdot t_1, \cdots, k_h \cdot t_h\}$, where $k_i$ is the number of $t_i's$ ($1\leq i \leq h$). Clearly,  $S(t_0, -p, t_1, \ldots, t_q) \cong S(t_0,-p,k_1 \cdot t_1, \cdots, k_h \cdot t_h)$ and $\sum\limits_{i=1}^{h}k_i= q$.

\begin{thm}\label{E-HL-index}
Let $G = S(t_0,-p,k_1 \cdot t_1, \cdots, k_h \cdot t_h)$ be the mixed extension of star $K_{1,q+1}$ defined in Theorem \ref{E-main-1-one}.  For $p+q \leq 1$, then $R_{\mathcal{A}}(G) =1$. For $p+q \geq 2$,
\begin{itemize}
\item[$\mathrm{(i)}$]
if $p+q \leq \lceil \frac{n-2t_0}{2} \rceil$, then $R_{\mathcal{A}}(G) =0$.
\item[$\mathrm{(ii)}$]
if $p+q = \lceil \frac{n-2t_0+2}{2} \rceil$,
for $t_0=1, q=0$ or $t_0=3, p+q=4$ or $t_0=4, p+q =3$, $R_{\mathcal{A}}(G)= 0$; for the other cases in conditions (i)-(v) of Theorem \ref{E-main-1-one},  $R_{\mathcal{A}}(G) \in (0,1)$.
\item[$\mathrm{(iii)}$]
if $\lceil \frac{n-2t_0+4}{2}\rceil \leq p+q \leq \lceil \frac{n}{2} \rceil$, then $R_{\mathcal{A}}(G)=1$.
\item[$\mathrm{(iv)}$]
if $p+q \geq \lceil \frac{n+2}{2} \rceil$ and $q\leq \lceil \frac{n-2}{2} \rceil$, then $R_{\mathcal{A}}(G)=2$.
\item[$\mathrm{(v)}$]
if $p+q \geq \lceil \frac{n+2}{2} \rceil$ and
\begin{itemize}
\item[$\mathrm{(a)}$]
for $q= \lceil \frac{n}{2} \rceil$, then  $R_{\mathcal{A}}(G)\in(-2, -2t_h)$;
\item[$\mathrm{(b)}$]
for $\lceil \frac{n+2}{2} \rceil \leq q \leq \lceil \frac{n+2k_h-2}{2} \rceil$, then  $R_{\mathcal{A}}(G)= -2t_h$;
\item[$\mathrm{(c)}$]
for $q= \lceil \frac{n+\sum\limits_{a=0}^{i}2k_{h-a}}{2} \rceil$, then $R_{\mathcal{A}}(G) \in(-2t_{h-i}, -2t_{h-i-1})$ $(0\leq i \leq h-1)$;
\item[$\mathrm{(d)}$]
for $\lceil \frac{n+\sum\limits_{a=0}^{i}2k_{h-a}+2}{2} \rceil \leq q \leq \lceil \frac{n+\sum\limits_{a=0}^{i+1}2k_{h-a}-2}{2} \rceil$, then $R_{\mathcal{A}}(G) = -2t_{h-i-1}$ $(0\leq i \leq h-2)$.
\end{itemize}
\end{itemize}
\end{thm}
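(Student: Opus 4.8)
The plan is to compute $\mathcal{A}(G)$ explicitly for each $G=S(t_0,-p,k_1\cdot t_1,\dots,k_h\cdot t_h)$ in the family, read off its full spectrum from an equitable partition, and then in every regime decide which eigenvalues occupy the median positions $H=\lfloor\tfrac{n+1}{2}\rfloor$ and $L=\lceil\tfrac{n+1}{2}\rceil$, where $n=t_0+p+\sum_{i=1}^{h}k_it_i$. If $p+q\le1$ then $G$ is a complete graph, so $\mathcal{A}(G)=J-I$ has spectrum $\{\,n-1,(-1)^{n-1}\,\}$ and both median entries are $-1$, whence $R_{\mathcal{A}}(G)=1$. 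So suppose $p+q\ge2$. A mixed extension of a star then has diameter $2$: every vertex of the centre $V_0$ is adjacent to all others and so has eccentricity $1$, while every vertex of a leaf part has a non-neighbour in another leaf part and so has eccentricity $2$. Substituting this into \eqref{A-defi}, with the vertices ordered $V_0\mid P\mid V_1\mid\cdots\mid V_q$ ($P$ the coclique part of order $p$), yields
$$\mathcal{A}(G)=\begin{pmatrix} A(K_{t_0}) & J\\[1mm] J^{\top} & 2\,A(K_{1,\dots,1,t_1,\dots,t_q})\end{pmatrix},$$
where $J$ denotes an all-ones block and $K_{1,\dots,1,t_1,\dots,t_q}$ is the complete multipartite graph with $p$ singleton parts and parts of orders $t_1,\dots,t_q$; equivalently, the leaf diagonal block on $P$ is $2(J_p-I_p)$, each diagonal block on a clique part $V_i$ is zero, and each block between two distinct leaf parts is $2J$.

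Next, the partition into $V_0,P,V_1,\dots,V_q$ is equitable for $\mathcal{A}(G)$, and merging the clique parts of equal order refines it to an equitable partition with $h+2$ cells ($h+1$ if $p=0$). Decomposing each cell's coordinate space into its constant vector plus its zero-sum subspace, and using the symmetry among equal-order clique parts, one peels off the bulk of $\mathrm{Spec}_{\mathcal{A}}(G)$: the eigenvalue $-1$ with multiplicity $t_0-1$, the eigenvalue $-2$ with multiplicity $p-1$, the eigenvalue $0$ with multiplicity $\sum_{i=1}^{h}k_i(t_i-1)$, and the eigenvalue $-2t_i$ with multiplicity $k_i-1$ for each $i$. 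The remaining $h+2$ eigenvalues are those of the quotient matrix $\widetilde B$; a short computation writes $\widetilde B=\mathbf{v}\mathbf{b}^{\top}-D$ as a rank-one perturbation of a diagonal matrix $D$, so by the matrix-determinant lemma they are exactly the roots of the secular equation $f(\lambda)=0$, where
$$f(\lambda)=1-\frac{t_0}{2\lambda+2-t_0}-\frac{2p}{\lambda+2}-\sum_{i=1}^{h}\frac{2k_it_i}{\lambda+2t_i}.$$

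Since $f$ is strictly increasing on each interval between consecutive poles $-2t_1<\cdots<-2t_h<-2<\tfrac{t_0}{2}-1$, it has exactly one root in each bounded gap and one root in $\bigl(\tfrac{t_0}{2}-1,\infty\bigr)$. Hence the $h+2$ quotient eigenvalues are: the Perron eigenvalue $\mu_1>\tfrac{t_0}{2}-1$, which by Theorem~\ref{E-main-1-one} is the unique positive $\mathcal{A}$-eigenvalue of $G$; a value $\mu_2$ with $-1<\mu_2\le0$, where $\mu_2=0$ holds exactly when $f(0)=1+\tfrac{t_0}{t_0-2}-p-q=0$, i.e.\ precisely for $t_0=3,\ p+q=4$ and for $t_0=4,\ p+q=3$; for $p\ge1$ a value in $(-2t_h,-2)$; and a value in each gap $(-2t_j,-2t_{j-1})$, $j=2,\dots,h$. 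Comparing the signs of $f$ at $-1,-2$ and just above each $-2t_i$ pins these down, so $\mathrm{Spec}_{\mathcal{A}}(G)$ in non-increasing order is $\mu_1$, then the zeros (with $\mu_2$ adjoined when $\mu_2=0$), then $\mu_2$ when $\mu_2<0$, then the $-1$'s, the $-2$'s, and finally the blocks $-2t_h,-2t_{h-1},\dots,-2t_1$ with the remaining roots of $f$ interleaved among them.

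Finally, since every multiplicity above is an explicit function of $t_0,p,q,k_i,t_i$, so is the number of eigenvalues exceeding any prescribed threshold; equating this count with $H$ and with $L$ reproduces exactly the case division by $\lceil\tfrac{n-2t_0}{2}\rceil,\lceil\tfrac{n-2t_0+2}{2}\rceil,\dots,\lceil\tfrac{n+2}{2}\rceil,\dots$ in the statement, and identifies $\{\xi_H,\xi_L\}$ in each item: it equals $\{0\}$ (so $R_{\mathcal{A}}(G)=0$) when the median lands in the zero-block, including the case $\mu_2=0$; it contains the single eigenvalue $\mu_2\in(-1,0)$ (so $R_{\mathcal{A}}(G)\in(0,1)$) when the median falls just past the zeros; it lies in the $-1$- or $-2$-block (so $R_{\mathcal{A}}(G)=1$ or $2$); and for the regimes with larger $q$ it equals some $-2t_i$ or a root of $f$ strictly between two consecutive numbers $-2t_i$; then $R_{\mathcal{A}}(G)=\max\{|\xi_H|,|\xi_L|\}$ yields the stated value or interval. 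The main obstacle is this last bookkeeping together with the fine localisation of the previous paragraph: when $h$ is large many roots of $f$ are threaded among the thresholds $-2t_i$, the degenerate coincidences (tied to the extremal parameters of Theorem~\ref{E-main-1-one}) must be tracked case by case, and the ceiling-function arithmetic relating the cut-offs to $H,L$ must be carried through uniformly --- which is where the ``cumbersome calculations'' mentioned in the Introduction reside.
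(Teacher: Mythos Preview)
Your approach is essentially the same as the paper's: compute the full $\mathcal{A}$-spectrum of $G$ and then locate the median positions by counting multiplicities. The paper packages the spectral computation as Proposition~\ref{ES-eigenvalue}, obtained by writing out the block form of $\mathcal{A}(G)$ and factoring the characteristic polynomial directly into $\lambda^{n-t_0-p-q}(\lambda+1)^{t_0-1}(\lambda+2)^{p-1}\prod_i(\lambda+2t_i)^{k_i-1}$ times a residual polynomial $h(\lambda)$ (or $l(\lambda)$ when $p=0$), and then checking the signs of $h$ at $0,-1,-2,-2t_i$ to localise its roots. Your equitable-partition/rank-one secular equation $f(\lambda)=0$ is simply a rational reformulation of $h(\lambda)=0$ that makes the interlacing with the poles $-2t_1,\dots,-2t_h,-2,\tfrac{t_0}{2}-1$ automatic; this is a mild stylistic gain, not a different idea.

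Where your write-up falls short of the paper is in the final step. The paper actually carries out the ``cumbersome calculations'': it splits into $n=t_0+p+q$ versus $n>t_0+p+q$, then into the ranges of $p+q$ against the thresholds $\lceil\tfrac{n-2t_0}{2}\rceil,\lceil\tfrac{n-2t_0+2}{2}\rceil,\dots$, and within the regime $p+q\ge\lceil\tfrac{n+2}{2}\rceil$ further into six subcases according to $t_0\in\{1,2\}$ and $p\in\{0,1,\ge2\}$, each time reading off $\xi_{\lceil(n+1)/2\rceil}$ from the explicit spectrum list. You describe this bookkeeping in one sentence and defer it; to match the paper's proof you would need to execute it. One small point to watch: your claim that $\mu_2=0$ occurs \emph{only} for $(t_0,p+q)\in\{(3,4),(4,3)\}$ is what your secular equation actually gives, whereas the paper's Proposition~\ref{ES-eigenvalue} (and hence the statement of Theorem~\ref{E-HL-index}(ii)) also lists $t_0=1,\,q=0$; you should flag and reconcile this when writing up the cases in detail.
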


Here is the remainder of the paper. In Section 2 we mainly give the proof of Theorem \ref{E-main-1-one}  which is decomposed into a series of lemmas. Especially, we determine the spectral distribution in the graphs with exactly one positive $\mathcal{A}$-eigenvalue. In Sections 3 and 4 we respectively provide the proofs for Theorems \ref{E-main-2-two} and \ref{E-HL-index} based on the results in previous section. In Section 5 we give some remarks and put forward several problems for further study.

\section{\large Graphs with exactly one positive $\mathcal{A}$-eigenvalue}

Throughout the paper, let $\mathscr{G}$ be the set of graphs with exactly one positive $\mathcal{A}$-eigenvalue. For two graphs $G$ and $H$, let $G \cup H$ be their {\it disjoint union}, and $H \subseteq G$ (or $H \nsubseteq G$) denote that $H$ is (or not) an induced subgraph of  $G$. We denote by $G \vee H$ the {\it join} obtained from $G \cup H$ by joining each vertex of $G$ to each one of $H$.

\begin{lem}[Cauchy Interlace Theorem]\label{cauchy}
Let $R$ be a real symmetric $n \times n$ matrix and let $S$ be a
principal submatrix of $R$ with order $m \times m$. Then, for
$i=1,2,\cdots,m$, $$\lambda_{n-m+i}(R)\leq \lambda_i(S)\leq
\lambda_i(R),$$ where $\lambda_1(R) \geq \lambda_2(R) \geq \cdots
\geq \lambda_n(R)$ and $\lambda_1(S) \geq \lambda_2(S) \geq \cdots
\geq \lambda_m(S)$ are respectively the eigenvalues of $R$ and $S$.
\end{lem}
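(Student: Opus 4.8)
The plan is to derive both inequalities from the Courant--Fischer min--max characterization of the eigenvalues of a real symmetric matrix. A first reduction is harmless: an arbitrary principal submatrix is obtained from a \emph{leading} principal submatrix by a simultaneous permutation of rows and columns, i.e.\ by conjugating $R$ with a permutation matrix, and this changes neither the spectrum of $R$ nor that of $S$. So I may assume $S = R[\{1,\dots,m\}]$. Let $W = \{x \in \mathbb{R}^n : x_{m+1} = \cdots = x_n = 0\}$, identified with $\mathbb{R}^m$ through the first $m$ coordinates, and write $\tilde x \in \mathbb{R}^m$ for the restriction of $x \in W$. The one elementary fact needed is that $x^{\top} R x = \tilde x^{\top} S \tilde x$ and $x^{\top} x = \tilde x^{\top}\tilde x$ for every $x \in W$; thus the Rayleigh quotient of $R$ restricted to $W$ is exactly the Rayleigh quotient of $S$.

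For the upper bound $\lambda_i(S) \le \lambda_i(R)$ ($1 \le i \le m$): Courant--Fischer gives $\lambda_i(S) = \max_{\dim U = i}\, \min_{0 \ne x \in U} \frac{x^{\top} S x}{x^{\top} x}$, the maximum taken over $i$-dimensional subspaces $U$ of $\mathbb{R}^m \cong W$. Pick $U^{*}$ attaining it. Regarding $U^{*}$ as an $i$-dimensional subspace of $\mathbb{R}^n$, the same min--max formula for $R$ (whose outer maximum now runs over a \emph{larger} family of subspaces) yields $\lambda_i(R) \ge \min_{0 \ne x \in U^{*}} \frac{x^{\top} R x}{x^{\top} x} = \min_{0 \ne x \in U^{*}} \frac{\tilde x^{\top} S \tilde x}{\tilde x^{\top}\tilde x} = \lambda_i(S)$.

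For the lower bound $\lambda_{n-m+i}(R) \le \lambda_i(S)$ I would apply the upper bound to $-R$ and $-S$ rather than redo the argument. The eigenvalues of $-R$ are $-\lambda_n(R) \ge \cdots \ge -\lambda_1(R)$, so $\lambda_k(-R) = -\lambda_{n+1-k}(R)$ and likewise $\lambda_k(-S) = -\lambda_{m+1-k}(S)$. The already-proved inequality $\lambda_k(-S) \le \lambda_k(-R)$ then reads $\lambda_{n+1-k}(R) \le \lambda_{m+1-k}(S)$; taking $k = m+1-i$ gives precisely $\lambda_{n-m+i}(R) \le \lambda_i(S)$. Equivalently, one can work directly with the dual form $\lambda_k(R) = \min_{\dim V = n-k+1} \max_{0 \ne x \in V} \frac{x^{\top} R x}{x^{\top} x}$ and feed it a minimizing subspace of $S$ sitting inside $W$.

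There is no genuine obstacle here beyond invoking Courant--Fischer: once that is granted, the statement is a two-line comparison of min--max problems over nested families of subspaces. The only point to watch is the trivial-but-essential observation that every subspace of $W$ is also a subspace of $\mathbb{R}^n$, which is what makes the outer optimization for $R$ dominate the one for $S$ in the upper bound (and dually for the lower bound). An alternative, should one wish to avoid Courant--Fischer, is induction on $n-m$: it suffices to treat $m = n-1$, and there the interlacing $\lambda_i(R) \ge \lambda_i(S) \ge \lambda_{i+1}(R)$ can be read off from the sign behaviour of the characteristic polynomial; but the min--max route is shorter and cleaner, so that is the one I would write up.
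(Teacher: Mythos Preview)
Your argument via the Courant--Fischer min--max principle is correct and is the standard textbook proof of Cauchy interlacing; the reduction to a leading principal submatrix, the identification of Rayleigh quotients on $W$, and the trick of passing to $-R$, $-S$ for the lower bound are all fine. Note, however, that the paper does not supply its own proof of this lemma: it is stated as the classical Cauchy Interlace Theorem and used as a black box throughout, so there is no ``paper's proof'' to compare against. Your write-up would serve perfectly well as a self-contained justification should one be desired.
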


\begin{lem}\label{E-sub}
Let $G$ be a connected graph and $H \subseteq G$. For any vertices $u,v\in V(H)$, if $\varepsilon_H(u) = \varepsilon_G(u)$ and $d_H(u,v)=d_G(u,v)$, then $\mathcal{A}(H)$ is a principal submatrix of $\mathcal{A}(G)$.
\end{lem}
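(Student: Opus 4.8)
The natural approach is a direct entrywise comparison. Index the rows and columns of $\mathcal{A}(G)$ by $V(G)$ and let $B$ be the principal submatrix of $\mathcal{A}(G)$ obtained by restricting to the index set $V(H) \subseteq V(G)$; I want to show $B = \mathcal{A}(H)$, i.e. that for every pair $u,v \in V(H)$ we have $(\mathcal{A}(H))_{uv} = (\mathcal{A}(G))_{uv}$. Since the hypothesis is assumed for \emph{all} $u,v \in V(H)$, I have simultaneously $\varepsilon_H(u)=\varepsilon_G(u)$, $\varepsilon_H(v)=\varepsilon_G(v)$ and $d_H(u,v)=d_G(u,v)$ at my disposal for any chosen pair.

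The key observation is that these equalities force the two ``trigger conditions'' in definition \eqref{A-defi} to coincide: from $\varepsilon_H(u)=\varepsilon_G(u)$ and $\varepsilon_H(v)=\varepsilon_G(v)$ we get $\min\{\varepsilon_H(u),\varepsilon_H(v)\}=\min\{\varepsilon_G(u),\varepsilon_G(v)\}$, and combined with $d_H(u,v)=d_G(u,v)$ this yields that $d_H(u,v)=\min\{\varepsilon_H(u),\varepsilon_H(v)\}$ holds \emph{if and only if} $d_G(u,v)=\min\{\varepsilon_G(u),\varepsilon_G(v)\}$. Now split into the two cases of the piecewise definition. If both conditions hold, then $(\mathcal{A}(H))_{uv}=d_H(u,v)=d_G(u,v)=(\mathcal{A}(G))_{uv}$. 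If both fail, then $(\mathcal{A}(H))_{uv}=0=(\mathcal{A}(G))_{uv}$. In either case the $(u,v)$ entries agree, which is what was needed. (The diagonal is harmless: $d(u,u)=0$ while $\min\{\varepsilon(u),\varepsilon(u)\}=\varepsilon(u)\geq 1$ whenever the graph has more than one vertex, so both matrices have zero diagonal; the one-vertex case is trivial.)

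There is essentially no obstacle here beyond bookkeeping, and in fact the content of the lemma is precisely an honest record of \emph{why} the hypotheses are needed: for a general induced subgraph $H\subseteq G$ one has only $d_H(u,v)\geq d_G(u,v)$ and no control on eccentricities, so $\mathcal{A}(H)$ need not sit inside $\mathcal{A}(G)$; the assumptions ``$H$ is distance-preserving and eccentricity-preserving at the relevant vertices'' are exactly what is required to make the two defining rules in \eqref{A-defi} line up. It may be worth remarking in the write-up that this is the mechanism by which we will later pass between a mixed extension and its induced subgraphs, so that the Cauchy interlacing of Lemma \ref{cauchy} can be applied to $\mathcal{A}(H)$ as a principal submatrix of $\mathcal{A}(G)$.
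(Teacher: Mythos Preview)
Your proof is correct and follows essentially the same route as the paper's: a direct entrywise comparison showing that the defining condition in \eqref{A-defi} fires in $H$ if and only if it fires in $G$, using $\varepsilon_H(\cdot)=\varepsilon_G(\cdot)$ and $d_H=d_G$ to match both the minima of eccentricities and the distances. The only cosmetic difference is that the paper splits cases on whether $\epsilon_{uv}(G)=0$ rather than first isolating the biconditional, but the content is identical.
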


\begin{proof}
For any two vertices $u,v\in V(H)$, we consider the $uv^{th}$ entry of $\mathcal{A}(H)$ and $\mathcal{A}(G)$, i.e., $\epsilon_{uv}(H)$ and $\epsilon_{uv}(G)$.
If $\epsilon_{uv}(G)\neq0$, we get $\epsilon_{uv}(G)=d_G(u,v)$ and then
$d_H(u,v)=d_G(u,v)= \min\{\varepsilon_G(u),\varepsilon_G(v)\}=\min\{\varepsilon_H(u),\varepsilon_H(v)\}$. So $\epsilon_{uv}(H)=d_H(u,v)=d_G(u,v)=\epsilon_{uv}(G)$.
If $\epsilon_{uv}(G)=0$, then $d_H(u,v)=d_G(u,v)< \min\{\varepsilon_G(u),\varepsilon_G(v)\}=\min\{\varepsilon_H(u),\varepsilon_H(v)\}$. So $\epsilon_{uv}(H)=0=\epsilon_{uv}(G)$.
From the above discussion, $\epsilon_{uv}(H)=\epsilon_{uv}(G)$ and so $\mathcal{A}(H)$ is a principal submatrix of $\mathcal{A}(G)$.
\end{proof}

The following corollary obviously follows from Lemmas \ref{cauchy} and \ref{E-sub}.

\begin{cor}\label{E-interlace}
Under the conditions in Lemma \ref{E-sub},  for $i=1,2,\cdots,n'$, $$\xi_{n-n'+i}(G)\leq \xi_i(H)\leq
\xi_i(G),$$ where $n' = |H|$ and $\xi_i(G)$ $(i=1,2,\ldots,n)$ is the $\mathcal{A}$-eigenvalue of $G$.
\end{cor}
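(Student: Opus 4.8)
The plan is to combine the two preceding results directly, since the corollary is essentially a translation of the Cauchy Interlace Theorem into the language of $\mathcal{A}$-eigenvalues. First I would record that, under the hypotheses imposed in Lemma \ref{E-sub} (namely $\varepsilon_H(u) = \varepsilon_G(u)$ for every $u \in V(H)$ and $d_H(u,v) = d_G(u,v)$ for every $u,v \in V(H)$), that lemma asserts exactly that $\mathcal{A}(H)$ is a principal submatrix of $\mathcal{A}(G)$. So I set $R = \mathcal{A}(G)$, which is an $n \times n$ real symmetric matrix, and $S = \mathcal{A}(H)$, an $n' \times n'$ principal submatrix of $R$, where $n' = |V(H)|$.

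Next I would simply apply Lemma \ref{cauchy} with $m = n'$. Its conclusion reads $\lambda_{n-m+i}(R) \leq \lambda_i(S) \leq \lambda_i(R)$ for $i = 1, 2, \ldots, m$, where the eigenvalues of $R$ and of $S$ are listed in non-increasing order. Since $\mathcal{A}(G)$ and $\mathcal{A}(H)$ are symmetric, all these eigenvalues are real, and by definition $\lambda_i(R) = \xi_i(G)$ and $\lambda_i(S) = \xi_i(H)$ under the non-increasing ordering convention already fixed in the introduction. Substituting $m = n'$ yields $\xi_{n-n'+i}(G) \leq \xi_i(H) \leq \xi_i(G)$ for $i = 1, 2, \ldots, n'$, which is precisely the claimed inequality chain.

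There is no real obstacle in this argument; the only points requiring a moment's care are purely bookkeeping: matching the index $m$ of Lemma \ref{cauchy} with $n'$, making sure the ordering conventions for the $\xi_i$ agree with those of the $\lambda_i$, and verifying that the hypotheses invoked are verbatim those of Lemma \ref{E-sub} so that the "principal submatrix" conclusion transfers without adjustment. Hence, as the preceding sentence in the text already anticipates, the corollary follows immediately.
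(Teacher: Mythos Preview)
Your proposal is correct and follows exactly the route the paper intends: apply Lemma~\ref{E-sub} to recognize $\mathcal{A}(H)$ as a principal submatrix of $\mathcal{A}(G)$, then feed $R=\mathcal{A}(G)$, $S=\mathcal{A}(H)$, $m=n'$ into Lemma~\ref{cauchy}. The paper itself dispenses with the corollary in one line, simply noting that it ``obviously follows from Lemmas~\ref{cauchy} and~\ref{E-sub}.''
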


\begin{table}[h!]
\begin{center}
\begin{tabular}{l l l}
\hline
Label & Graph & $\xi_2$\\
\hline
$F_1$ & $S(5,-3)$ & $4-\sqrt{15}$ \\
$F_2$ & $S(5,-2,2)$ & $0.138+$ \\
$F_3$ & $S(5,-1,2,2)$ & $0.152+$ \\
$F_{4}$ & $S(4,-4)$ & $\sqrt{1}{2}(9-\sqrt{73})$ \\
$F_{5}$ & $S(4,-3,2)$ & $0.238+$ \\
$F_{6}$ & $S(4,-2,2,2)$ & $0.248+$ \\
$F_{7}$ & $S(4,-1,2,2,2)$ & $0.259+$ \\
$F_{8}$ & $S(3,-5)$ & $5-2\sqrt{6}$ \\
$F_{9}$ & $S(3,-4,2)$ & $0.103+$ \\
$F_{10}$ & $S(3,-3,2,2)$ & $0.105+$ \\
$F_{11}$ & $S(3,-2,2,2,2)$ & $0.107+$ \\
$F_{12}$ & $S(3,-1,2,2,2,2)$ & $0.109+$ \\
\hline
\end{tabular}
\caption{The second largest $\mathcal{A}$-eigenvalues of $F_1$--$F_{12}$.}
\label{tab1}
\end{center}
\end{table}

 The subsequent lemma follows from Corollary \ref{E-interlace} and Table 1.

\begin{lem}\label{F1-12}
 Let $G = S(t_0, -p, t_1, \ldots, t_q) $ be a mixed extension with $p, q\geq 0$, $t_j \geq 2$ ($1 \leq j \leq q$). If $G \in \mathscr{G}$, then the graphs $F_1$--$F_{12}$ in  Table \ref{tab1} are not the induced subgraphs of $G$.
\end{lem}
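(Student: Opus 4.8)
\noindent\emph{Sketch of the intended argument.}

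The plan is to read each $F_i$ in Table~\ref{tab1} as an obstruction that already fails the desired conclusion — a connected graph of diameter $2$ whose anti-adjacency matrix has at least two positive eigenvalues — and then to push this failure up to any $G$ containing $F_i$ via Cauchy interlacing. So I would argue by contradiction: assume $F_i\subseteq G$ for some $i\in\{1,\dots,12\}$. Each $F_i$ is a mixed extension of a small star with center clique of order $t_0^{(i)}\in\{3,4,5\}$; its anti-adjacency matrix decomposes into a handful of constant blocks, so one can write down $\mathcal{A}(F_i)$ explicitly (equivalently, use the obvious equitable partition) and read off that its second largest eigenvalue $\xi_2(F_i)$ is the positive quantity recorded in Table~\ref{tab1}. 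In particular $F_i$ has at least two positive $\mathcal{A}$-eigenvalues. If $\mathcal{A}(F_i)$ is a principal submatrix of $\mathcal{A}(G)$, then Corollary~\ref{E-interlace} gives $\xi_1(G)\ge\xi_2(G)\ge\xi_2(F_i)>0$, so $G$ has two positive $\mathcal{A}$-eigenvalues, contradicting $G\in\mathscr{G}$. Thus the whole point is to check the hypotheses of Lemma~\ref{E-sub} for the embedding $F_i\subseteq G$.

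Write $G=S(t_0,-p,t_1,\dots,t_q)$ with center clique $C$, coclique part $P$ of order $p$, and clique leaf-parts $Q_1,\dots,Q_q$. First I would observe that every $F_i$ has coclique leaf-part of order at least $1$ and hence contains an induced $\overline{K_2}$ (two vertices of that part, or — when it is a single vertex — that vertex together with a vertex of some clique leaf-part). Therefore $G$ is not complete, and since $C\neq\emptyset$ this forces $\diam(G)=2$. As $F_i$ also has diameter $2$ and sits in $G$ as an induced subgraph, all distances agree: for $u,v\in V(F_i)$ either $uv$ is an edge of both (distance $1$) or of neither, in which case both distances equal $2$. This settles the distance requirement of Lemma~\ref{E-sub}, so only the eccentricities remain.

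For the eccentricities, I would note that in $F_i$ the center-clique vertices have eccentricity $1$ and every leaf vertex has eccentricity $2$ (each leaf vertex has a non-neighbour inside $F_i$, and $\diam(F_i)=2$). On the $G$ side I would use two structural facts, both read off directly from the mixed-extension structure: when $G$ is not complete its eccentricity-$1$ vertices are exactly the vertices of $C$, and the neighbourhood of any vertex of $G$ lying outside $C$ induces a clique (such a vertex lies in $P$, with neighbourhood $C$, or in some $Q_j$, with neighbourhood $C\cup(Q_j\setminus\{u\})$ — each a clique). Consequently a center-clique vertex $u$ of $F_i$ is adjacent in $G$ to two mutually non-adjacent vertices of $F_i$ (two non-adjacent leaf vertices, which exist in every $F_i$), so its $G$-neighbourhood is not a clique; hence $u\in C$ and $\varepsilon_G(u)=1=\varepsilon_{F_i}(u)$. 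A leaf vertex $u$ of $F_i$ has a non-neighbour in $F_i$, hence in $G$, so $u\notin C$ and $\varepsilon_G(u)=2=\varepsilon_{F_i}(u)$. With distances and eccentricities matched, Lemma~\ref{E-sub} applies, $\mathcal{A}(F_i)$ is a principal submatrix of $\mathcal{A}(G)$, and the interlacing contradiction goes through.

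The genuinely routine part is the block-eigenvalue bookkeeping that produces the twelve entries of Table~\ref{tab1}. The main obstacle, modest as it is, is the structural claim invoked in the third paragraph: that outside the center clique every vertex of a mixed extension of a star has a clique neighbourhood and that these are precisely the eccentricity-$2$ vertices. This is a short finite verification over the three vertex types $C$, $P$, $Q_j$, together with the degenerate possibilities $p+q\le 1$, where $G$ is complete and the lemma holds vacuously since $G$ then contains no induced $\overline{K_2}$.
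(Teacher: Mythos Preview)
Your proposal is correct and follows the same approach as the paper: apply Corollary~\ref{E-interlace} together with the computed values of $\xi_2(F_i)$ in Table~\ref{tab1} to force $\xi_2(G)>0$. The paper records this in a single line; you have additionally supplied the verification that each embedding $F_i\subseteq G$ meets the hypotheses of Lemma~\ref{E-sub} (matching distances and eccentricities via the star mixed-extension structure), which the paper leaves implicit.
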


As usual, let $C_n, P_n, K_n$ denote the {\it cycle}, {\it path} and {\it complete graph} of order $n$, respectively.

\begin{lem}\label{pos-diam-2}
For any $G \in \mathscr{G}$, $\diam(G) \leq 2$.
\end{lem}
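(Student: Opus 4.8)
The plan is to argue by contradiction: assume $\diam(G)=D\ge 3$ and exhibit a $4\times 4$ principal submatrix $M$ of $\mathcal{A}(G)$ with $\operatorname{tr}(M)=0$ and $\det(M)>0$. Since $M$ is then nonsingular with vanishing trace, the number of its negative eigenvalues is even — that is, $0$, $2$ or $4$ — and it is neither $0$ nor $4$ (either would make the trace nonzero), so $M$ has exactly two positive eigenvalues. By the Cauchy Interlace Theorem (Lemma \ref{cauchy}), $\xi_2(G)\ge \lambda_2(M)>0$, contradicting $G\in\mathscr{G}$. The one methodological point worth flagging is that one must interlace $\mathcal{A}(G)$ against a principal submatrix chosen by hand rather than against $\mathcal{A}(H)$ for an induced subgraph $H$ via Lemma \ref{E-sub}: the graphs $C_6$ and $C_7$ have diameter $3$ and lie outside $\mathscr{G}$, yet no induced $4$-vertex subgraph $H$ meets the hypotheses of Lemma \ref{E-sub} with $\mathcal{A}(H)$ having two positive eigenvalues, whereas the restriction of $\mathcal{A}(C_6)$ (resp.\ $\mathcal{A}(C_7)$) to a well-chosen $4$-set plainly does.

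Two elementary observations are used repeatedly. (a) If $d_G(a,b)=D$ then $\varepsilon_G(a)=\varepsilon_G(b)=D$, so the $(a,b)$-entry of $\mathcal{A}(G)$ is $D$. (b) If $a\sim b$ and $\varepsilon_G(a),\varepsilon_G(b)\ge 2$, then the $(a,b)$-entry of $\mathcal{A}(G)$ is $0$. Fix a diametral pair $u,w$ together with a $u$--$w$ geodesic $u=x_0,x_1,\dots,x_D=w$. Since $d_G(x_1,u)=1$ and $d_G(x_1,w)=D-1$, and dually for $x_{D-1}$, we have $\varepsilon_G(x_1),\varepsilon_G(x_{D-1})\in\{D-1,D\}$, and the argument splits along these values.

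If $\varepsilon_G(x_1)=\varepsilon_G(x_{D-1})=D-1$, take $M$ to be the submatrix of $\mathcal{A}(G)$ on $\{x_{D-1},u,w,x_1\}$. Using (a), (b), $d_G(u,x_{D-1})=d_G(w,x_1)=D-1$ and $d_G(x_1,x_{D-1})<D-1$, one checks that $M$ is the weighted path with consecutive weights $D-1,D,D-1$, so $\operatorname{tr}(M)=0$ and $\det(M)=(D-1)^4>0$. Otherwise $\varepsilon_G(x_1)=D$ — the case $\varepsilon_G(x_{D-1})=D$ being the mirror image after reversing the geodesic — so there is a vertex $z$ with $d_G(x_1,z)=D$; then $\varepsilon_G(z)=D$, $z\notin\{u,w\}$, and the triangle inequality forces $d_G(u,z)\in\{D-1,D\}$ and $d_G(w,z)\in\{1,\dots,D\}$. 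Taking $M$ to be the submatrix on $\{u,w,x_1,z\}$ and distinguishing whether $d_G(u,z)$ and $d_G(w,z)$ equal $D$, a short entrywise computation identifies $M$ as $D\cdot A(2K_2)$ (if neither equals $D$), $D\cdot A(P_4)$ (if exactly one does), or $D$ times the adjacency matrix of the paw, i.e.\ a triangle with a pendant edge (if both do); in every branch $\operatorname{tr}(M)=0$ and $\det(M)=D^4>0$.

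I expect the only real work to be this entrywise bookkeeping: for each sub-configuration one records which of the six off-diagonal entries of $M$ vanish, using only $\varepsilon_G(x_1),\varepsilon_G(z)\in\{D-1,D\}$, $d_G(u,x_1)=1$, $d_G(w,x_1)=D-1$, and the handful of allowed values of $d_G(u,z)$ and $d_G(w,z)$, together with the elementary determinant evaluations $\det A(2K_2)=\det A(P_4)=\det A(\text{paw})=1$ and $\det(\text{weighted }P_4\text{ with weights }a,b,c)=a^2c^2$. No conceptual difficulty beyond organizing these cases is anticipated; once $M$ has been identified, the trace/determinant count closes each branch uniformly.
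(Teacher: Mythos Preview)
Your proof is correct and follows essentially the same route as the paper's: both argue by contradiction, select the same four vertices (either $\{x_{D-1},u,w,x_1\}$ when both inner eccentricities drop to $D-1$, or $\{u,w,x_1,z\}$ when $\varepsilon_G(x_1)=D$), arrive at exactly the same family of $4\times4$ principal submatrices of $\mathcal{A}(G)$, and invoke Cauchy interlacing. The only difference is the verification that each submatrix has two positive eigenvalues: the paper computes $\lambda_2$ (or the characteristic polynomial) case by case, whereas your uniform ``$\operatorname{tr} M=0$, $\det M>0$'' argument is a neat shortcut that avoids those calculations.
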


\begin{proof}
Assume that $\diam(G) \geq 3$. Let $P_{d+1} = v_0v_1 \ldots v_{d-1}v_d$ be a path with length $\diam(G)=d$ of $G$. Then $\varepsilon_G(v_0) = \varepsilon_G(v_d) = d$ and $d-1 \leq \varepsilon_G(v_1), \varepsilon_G(v_{d-1}) \leq d$. Let us consider the following cases.

{\it Case 1.}  $\varepsilon_G(v_1) = d$ and $\varepsilon_G(v_{d-1}) \leq d$, or $\varepsilon_G(v_1) \leq d$ and $\varepsilon_G(v_{d-1}) = d$. Without loss of generality, for some $u \in V(G)$ set $\varepsilon_G(v_1) = d_G(v_1, u) = d = \varepsilon_G(u)$. Then the principal submatrix of $\mathcal{A}(G)$ indexed by $\{v_0, v_d, v_1, u\}$ is
$$
W_1 = \begin{pmatrix}        0        &      d             & 0 &  \epsilon_{v_0u}  \\
                             d        &      0             & 0 &  \epsilon_{v_du} \\
                             0        &      0             & 0 & d  \\
                   \epsilon_{uv_0} & \epsilon_{uv_d} & d & 0
                    \end{pmatrix}.
$$
By $d_G(v_i, u) \leq d$ we have  $\epsilon_{v_iu} \in \{0, d\}$ $(0 \leq i \leq d)$. Thereby, the principal submatrix of $\mathcal{A}(G)$ indexed by $\{v_0, v_d, v_1, u\}$ is one of the following matrices:
$$
W_2 = \begin{pmatrix}   0 & d & 0 & 0  \\
                        d & 0 & 0 & 0  \\
                        0 & 0 & 0 & d  \\
                        0 & 0 & d & 0
                    \end{pmatrix},
W_3 = \begin{pmatrix}   0 & d & 0 & 0  \\
                        d & 0 & 0 & d  \\
                        0 & 0 & 0 & d  \\
                        0 & d & d & 0
                    \end{pmatrix},
W_4 = \begin{pmatrix}   0 & d & 0 & d  \\
                        d & 0 & 0 & 0  \\
                        0 & 0 & 0 & d  \\
                        d & 0 & d & 0
                    \end{pmatrix},
W_5 = \begin{pmatrix}   0 & d & 0 & d  \\
                        d & 0 & 0 & d  \\
                        0 & 0 & 0 & d  \\
                        d & d & d & 0
                    \end{pmatrix}.
$$
A direct calculation shows that the second largest eigenvalues of the first three matrices above are respectively $d>0$, $\frac{-1+\sqrt{5}}{2}d>0$, $\frac{-1+\sqrt{5}}{2}d>0$, and that the characteristic polynomial of $W_5$ is $$\phi_{W_5}(\lambda) = (\lambda+d)f(\lambda), \;\; \mbox{where} \; f(\lambda) = \lambda^3 - d\lambda^2 - 3d^2\lambda + d^3.$$
For $d \geq 3$ we get $f(-d^2) = d^3 + 3 d^4 - d^5 - d^6 < 0$, $f(0) = d^3 > 0$, $f(d) = -2d^3 <0$ and $f(d^2) = d^3 - 3 d^4 - d^5 + d^6 > 0$.  Hence, the second eigenvalue $\xi_2(W_5)$ of $W_5$ is greater than 0. By Lemma \ref{cauchy}, we get $\xi_2(G) \geq \min\{\xi_2(W_{i}) \mid i = 2,3,4,5\} > 0$, a contradiction.

{\it Case 2.} $\varepsilon_G(v_1) = d-1 = \varepsilon_G(v_{d-1})$. Then, the principal submatrix of $\mathcal{A}(G)$ indexed by $\{v_0, v_d, v_1, v_{d-1}\}$ is
$$
W_6 = \begin{pmatrix}   0  &  d  &  0  & d-1  \\
                        d  &  0  & d-1 & 0    \\
                        0  & d-1 &  0  & 0    \\
                       d-1 &  0  &  0 & 0
                    \end{pmatrix}
$$
with the second largest eigenvalue $\xi_2(W_6) = \frac{1}{2}(-d+\sqrt{4 - 8d + 5d^2}) > 0$, and so $\xi_2(G) \geq \xi_2(W_6) > 0$ by Lemma \ref{cauchy}.

As proved above, $\diam(G) \leq 2$ for any $G \in \mathscr{G}$.

\end{proof}

\unitlength 3mm
\linethickness{0.4pt}
\ifx\plotpoint\undefined\newsavebox{\plotpoint}\fi
\begin{center}
\begin{picture}(41.114,15.074)(0,0)
\put(6.432,6){\makebox(0,0)[cc]{$P_4$}}
\put(.5,10.038){\circle*{1}}
\put(.553,8.741){\makebox(0,0)[cc]{$v_1$}}
\put(4.442,10.038){\circle*{1}}
\put(4.442,8.741){\makebox(0,0)[cc]{$v_2$}}
\put(8.436,10.091){\circle*{1}}
\put(8.436,8.741){\makebox(0,0)[cc]{$v_3$}}
\put(12.483,10.091){\circle*{1}}
\put(12.43,8.741){\makebox(0,0)[cc]{$v_4$}}
\put(.553,10.091){\line(1,0){11.93}}
\put(20.442,5){\makebox(0,0)[cc]{$C_4$}}
\put(17.605,7.074){\circle*{1}}
\put(17.605,5.9){\makebox(0,0)[cc]{$v_3$}}
\put(23.571,7.074){\circle*{1}}
\put(23.571,5.9){\makebox(0,0)[cc]{$v_4$}}
\put(17.649,12.996){\circle*{1}}
\put(17.561,14.5){\makebox(0,0)[cc]{$v_1$}}
\put(23.571,13.041){\circle*{1}}
\put(23.571,14.5){\makebox(0,0)[cc]{$v_2$}}
\put(17.561,13.085){\line(0,-1){5.966}}
\put(17.561,7.119){\line(1,0){6.099}}
\put(23.659,7.119){\line(0,1){6.01}}
\put(23.659,13.129){\line(-1,0){6.099}}
\put(34.707,6){\makebox(0,0)[cc]{$P_3 \cup K_1$}}
\put(28.637,10.094){\circle*{1}}
\put(28.593,11.5){\makebox(0,0)[cc]{$v_4$}}
\put(32.659,9.961){\circle*{1}}
\put(32.615,11.5){\makebox(0,0)[cc]{$v_3$}}
\put(36.681,10.006){\circle*{1}}
\put(36.592,11.5){\makebox(0,0)[cc]{$v_2$}}
\put(40.614,10.05){\circle*{1}}
\put(40.702,11.5){\makebox(0,0)[cc]{$v_1$}}
\put(32.659,10.05){\line(1,0){4.066}}
\put(28.682,10.05){\line(1,0){4.154}}
\put(13,2){{\bf Fig. 1}: Graphs in Lemma \ref{forbidden}.}
\end{picture}
\end{center}

\begin{lem}\label{forbidden}
No graph in $\mathscr{G}$ contains one in $\{P_4, C_4, P_3 \cup K_1\}$ as an induced subgraph.
\end{lem}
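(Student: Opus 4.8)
The plan is to use the bound $\diam(G)\le 2$ valid for every $G\in\mathscr{G}$ (Lemma \ref{pos-diam-2}), which makes $\mathcal{A}(G)$ completely explicit on any four vertices inducing one of $P_4$, $C_4$, $P_3\cup K_1$. Assume for contradiction that $H\in\{P_4,\,C_4,\,P_3\cup K_1\}$ is an induced subgraph of some $G\in\mathscr{G}$, and let $S$ be the $4\times4$ principal submatrix of $\mathcal{A}(G)$ indexed by $V(H)$. I will show that $S$ already has two positive eigenvalues; then the Cauchy Interlace Theorem (Lemma \ref{cauchy}) gives $\xi_2(G)\ge\lambda_2(S)>0$, contradicting that $G$ has exactly one positive $\mathcal{A}$-eigenvalue.

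The crucial observation is that no vertex of $H$ is a universal vertex of $G$: in each of $P_4$, $C_4$, $P_3\cup K_1$ every vertex misses at least one of the other three, and since $H$ is induced, such a non-adjacent pair is non-adjacent in $G$ as well. Hence, using $\diam(G)\le 2$, each $u\in V(H)$ satisfies $\varepsilon_G(u)=2$, and for distinct $u,v\in V(H)$ we have $d_G(u,v)=1$ when $uv\in E(G)$ and $d_G(u,v)=2$ otherwise. Substituting into \eqref{A-defi}, the $(u,v)$-entry of $S$ equals $0$ when $uv\in E(G)$ and $2$ when $uv\notin E(G)$; that is, $S=2A(\overline{H})$, where $\overline{H}$ is the complement of $H$.

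It remains to identify these complements and count their positive eigenvalues. One checks that $\overline{P_4}\cong P_4$, $\overline{C_4}\cong 2K_2$, and $\overline{P_3\cup K_1}$ is the paw (a triangle with one pendant edge). Each of these has at least two positive adjacency eigenvalues: $2K_2$ has spectrum $\{1,1,-1,-1\}$; $P_4$ has positive eigenvalues $\tfrac{1+\sqrt5}{2}$ and $\tfrac{\sqrt5-1}{2}$; and the paw has characteristic polynomial $(\lambda+1)\bigl(\lambda^{3}-\lambda^{2}-3\lambda+1\bigr)$, whose cubic factor is negative at $\lambda=1$ but positive at $\lambda=0$ and at $\lambda=3$, hence has a root in $(0,1)$ and a root in $(1,3)$. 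In every case $\lambda_2(S)=2\lambda_2(\overline{H})>0$, which yields the desired contradiction via Lemma \ref{cauchy}.

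There is no serious obstacle; the two points that need care are both in the reduction step. First, one must invoke $\diam(G)\le 2$ (not merely that $H$ is induced) to be sure that every non-edge of $H$ has $G$-distance exactly $2$, since an induced subgraph alone does not control these distances. Second, when $H$ is disconnected (the case $P_3\cup K_1$) one cannot identify $S$ with $\mathcal{A}(H)$ through Lemma \ref{E-sub}; the argument above sidesteps this by reading off the entries of $S$ inside $\mathcal{A}(G)$ directly. The remaining spectral facts about $P_4$, $2K_2$ and the paw are immediate $4\times4$ computations.
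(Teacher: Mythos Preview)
Your proof is correct and follows essentially the same route as the paper: use $\diam(G)\le 2$ to pin down all eccentricities and distances on $V(H)$, write the resulting $4\times4$ principal submatrix, show it has two positive eigenvalues, and invoke Cauchy interlacing. The one cosmetic difference is that you package the three explicit matrices $W_7,W_8,W_9$ of the paper uniformly as $S=2A(\overline{H})$, which is a pleasant observation but amounts to the same computation.
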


\begin{proof}
By Lemma \ref{pos-diam-2} it suffices to consider $\diam(G) = 2$. Assume by the contradiction that $P_4 = v_1v_2v_3v_4 \subseteq G$. Due to $\varepsilon_G(v_i) \leq 2 $ $(1 \leq i \leq 4)$,  $d_G(v_i, v_j) \leq 2$ for any $i ,j \in \{1, 2, 3, 4\}$. Hence, we get $d_G(v_1, v_3) = 2, d_G(v_1, v_4) = 2, d_G(v_2, v_4) = 2$ and $\varepsilon_G(v_i) = 2$ $(i = 1, 2, 3, 4)$. Thus, the principal submatrix of $\mathcal{A}(G)$ indexed by these four vertices are
$$
W_7 = \begin{pmatrix}   0 & 0 & 2 & 2  \\
                        0 & 0 & 0 & 2  \\
                        2 & 0 & 0 & 0  \\
                        2 & 2 & 0 & 0
       \end{pmatrix}
$$
with the second largest eigenvalue $\xi_2(W_7) = \sqrt{5}-1$. By Lemma \ref{cauchy} we get $\xi_2(G) \geq \xi_2(W_7)  > 0$, a contradiction. Hence, $P_4 \nsubseteq G$.

Set $C_4 \subseteq G$. Clearly, the principal submatrix of $\mathcal{A}(G)$ indexed by $V(C_4)$ is
$$
W_8 = \begin{pmatrix}   0 & 0 & 0 & 2  \\
                        0 & 0 & 2 & 0  \\
                        0 & 2 & 0 & 0  \\
                        2 & 0 & 0 & 0  \\
       \end{pmatrix}
$$
whose the second largest eigenvalue is $\xi_2(W_8) = 2$. Thus, $\xi_2(G) \geq 2 > 0$, a contradiction.

For $P_3 \cup K_1  \subseteq G$, the principal submatrix of $\mathcal{A}(G)$ indexed by those vertices is
$$
W_9 = \begin{pmatrix}   0 & 2 & 2 & 2  \\
                        2 & 0 & 0 & 2  \\
                        2 & 0 & 0 & 0  \\
                        2 & 2 & 0 & 0
       \end{pmatrix}
$$
with the second largest eigenvalue $\xi_2(W_9) \approx 0.6222$. So, $\xi_2(G) > 0$, a contradiction.

This completes the proof.
\end{proof}

Note that  the vertex set of $S(t_0, t_1, \ldots, t_k)$ is $V_0 \cup V_1 \cup \cdots \cup V_k$, where the corresponding set of $t_i$ is $V_i = \{v_{i1},v_{i2},\ldots,v_{it_i}\}$ ($0 \leq i \leq k$).
Let $G[v_1,\cdots,v_j]$ denote the subgraph of $G$ induced by $\{v_1,\cdots,v_j\}$.

\begin{lem}\label{add}
Let $G'=S(t_0, t_1, \ldots, t_k)$ with $k \geq 2$ and $t_i \geq 1$ ($1 \leq i \leq k)$. If $G' \in \mathscr{G}$ and $G \in \mathscr{G}$ is the connected graph obtained from $G'$ by adding a new vertex $w$, then $w$ must be adjacent to all vertices of $V_0$.
\end{lem}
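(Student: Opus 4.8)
The plan is to argue by contradiction and reduce everything to the forbidden induced subgraphs of Lemma~\ref{forbidden}; no eigenvalue computation is needed. Suppose $w$ is not adjacent to some vertex $x\in V_0$. Since $k\ge 2$, the star underlying $G'$ has at least two leaf-parts, so both $V_1$ and $V_2$ are nonempty; fix $a\in V_1$ and $b\in V_2$. By the definition of a mixed extension, $a$ and $b$ lie over two distinct (hence nonadjacent) leaves of the star, so $a\nsim b$, while $x$ lies over the centre and is therefore adjacent to both $a$ and $b$. Consequently $\{a,x,b\}$ induces the path $a\!-\!x\!-\!b$ in $G'$, hence also in $G$.

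Next I would split into three cases according to how $w$ attaches to $\{a,b\}$ in $G$, keeping in mind the standing assumption $w\nsim x$. If $w$ is adjacent to neither $a$ nor $b$, then $\{a,x,b,w\}$ induces $P_3\cup K_1$. If $w$ is adjacent to exactly one of them, say $w\sim a$ (the other subcase is symmetric), then $w\!-\!a\!-\!x\!-\!b$ is an induced $P_4$, the only possible extra edges $wx$, $wb$, $ab$ all being absent. If $w$ is adjacent to both $a$ and $b$, then $a\!-\!w\!-\!b\!-\!x\!-\!a$ is an induced $C_4$, since its two possible chords $ab$ and $wx$ are absent. In every case $G$ contains a member of $\{P_4,C_4,P_3\cup K_1\}$ as an induced subgraph, contradicting Lemma~\ref{forbidden} because $G\in\mathscr{G}$. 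Hence $w$ is adjacent to every vertex of $V_0$.

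Two small points deserve attention. First, the hypothesis $k\ge2$ is used precisely to produce two distinct leaf-parts $V_1,V_2$, and therefore a nonadjacent pair $a,b$ with a common neighbour $x\in V_0$; the argument genuinely needs this. Second, it is the membership $G\in\mathscr{G}$ — not $G'\in\mathscr{G}$ — that does the work: $G\in\mathscr{G}$ forces $\diam(G)\le2$ by Lemma~\ref{pos-diam-2} and makes Lemma~\ref{forbidden} available. I do not anticipate a real obstacle here; this is a short structural consequence of the $\{P_4,C_4,P_3\cup K_1\}$-free property, the only mild care being to verify that the ``$w\sim a$ and $w\sim b$'' case yields a truly induced $C_4$ (both chords missing) rather than a harmless configuration.
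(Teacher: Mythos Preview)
Your proof is correct and follows essentially the same approach as the paper: assume $w\nsim x$ for some $x\in V_0$, pick $a\in V_1$, $b\in V_2$ (using $k\ge 2$), and case on the adjacencies of $w$ to $\{a,b\}$ to produce an induced $P_3\cup K_1$, $P_4$, or $C_4$, contradicting Lemma~\ref{forbidden}. The paper's version is a bit terser (it dispatches the ``neither'' case first by invoking $P_3\cup K_1\nsubseteq G$, then handles the remaining two cases together), but the argument is the same.
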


\begin{proof}
For the mixed extension of star $S_{1,k}$, we get that any vertex in $V_0$ is adjacent to those ones in other $V_i's$ ($1\leq i \leq k$), and that the vertices in $V_i$ and the vertices in $V_j$ are mutually not adjacent ($1\leq i\neq j \leq k$).

Assume by way of contradiction that there exists some vertex $v_{01} \in V_0$ is not adjacent to $w$. For $v_{i1} \in V_i$ and $v_{j1} \in V_j$ ($i\neq j$), by the above statement we get  $v_{01}, v_{i1}$ and $v_{01},v_{j1}$ are adjacent. Due to  $P_1 \cup P_3 \nsubseteq G$ (see Lemma \ref{forbidden}), $w$ is adjacent to at least one vertex of $v_{i1}$ and $v_{j1}$. Then  $G[v_{01},v_{i1},w,v_{j1}] \cong C_4$ if $w$ is adjacent to both $v_{i1}$ and $v_{j1}$, and otherwise $G[v_{01},v_{i1},w,v_{j1}] \cong P_4$. This contradicts Lemma  \ref{forbidden}.

As shown above, $w$ must be adjacent to all vertices of $V_0$.
\end{proof}

\begin{prop}\label{E-one-positive}
Let $G \in \mathscr{G}$ with order $n$. Then $G$ is the mixed extension $S(t_0, t_1, \ldots, t_k)$, where $k \geq 1$ and $t_i \geq 1$ ($1 \leq i \leq k$).
\end{prop}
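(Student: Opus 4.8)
The plan is to show that any $G\in\mathscr{G}$ is, as a graph, obtained from a single vertex by repeatedly attaching new vertices while staying inside $\mathscr{G}$, and that the combinatorial structure forced by the forbidden subgraphs $\{P_4, C_4, P_3\cup K_1\}$ (Lemma \ref{forbidden}) is exactly that of a mixed extension of a star. First I would dispose of the small cases: if $n\le 3$, then directly $G$ is one of $K_1, K_2, \overline{K_2}$ (disconnected, excluded), $P_3$, $K_3$, each of which is visibly a mixed extension of $K_{1,1}$ or $K_{1,2}$, i.e. of the form $S(t_0,t_1,\dots,t_k)$ with $k\ge 1$; note $K_2 = S(-1,1)=S(1,1)$ and $P_3=S(1,-1,1)$ type considerations. (We also need $\xi_2\le 0$ for these, which holds since $\diam\le 2$ and none contains the forbidden graphs.) So assume $n\ge 4$.

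Next I would set up an induction on $n$. By Lemma \ref{pos-diam-2}, $\diam(G)=2$ (the case $\diam(G)=1$ gives $G=K_n=S(n-1,1)$ up to relabelling, handled directly). Pick a vertex $w$ of $G$ such that $G-w$ is connected; such $w$ exists (take a non-cut vertex, e.g. a leaf of a spanning tree). Since $G-w$ is an induced subgraph with $\diam(G-w)\le\diam(G)=2$... — here is the first subtlety: I must check that $G-w\in\mathscr{G}$. This follows from Corollary \ref{E-interlace} applied with $H=G-w$, provided eccentricities and distances are preserved; since $\diam(G)=2$, every vertex of $G-w$ has eccentricity $\le 2$ in both graphs and distances are $1$ or $2$ in both (a distance-$1$ pair stays distance $1$; a distance-$2$ pair cannot become closer), so the hypotheses of Lemma \ref{E-sub} hold as long as $G-w$ still has diameter exactly $2$ — and if instead $\diam(G-w)=1$ then $G-w=K_{n-1}$, which is a star mixed extension, so the conclusion of the proposition holds for $G-w$ directly. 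In all cases $\xi_2(G-w)\le\xi_2(G)\le 0$, so $G-w\in\mathscr{G}$. By the inductive hypothesis, $G-w = S(t_0, t_1,\dots,t_k)$ with $k\ge 1$, $t_i\ge 1$.

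The main work — and the main obstacle — is the inductive step: re-attaching $w$ and showing the result is again a star mixed extension. The subcase $k\ge 2$ is exactly Lemma \ref{add}: $w$ is adjacent to every vertex of $V_0$. Then I must determine which vertices of the $V_i$ ($i\ge1$) are neighbours of $w$. Using $P_3\cup K_1, C_4, P_4\not\subseteq G$ and the fact that $w$ is complete to $V_0$, a short case analysis shows that for each $i\ge 1$, $w$ is adjacent either to all of $V_i$ or to none of $V_i$ (otherwise a $P_4$ or $C_4$ appears on $\{v_{01}, v', v'', v_{j1}\}$), and that $w$ is adjacent to at most one of the sets $V_i$ (two such sets $V_i, V_j$ would give $C_4$ on $\{v_{i1}, w, v_{j1}, v_{01}\}$ together with $v_{01}$... actually $w$-complete-to-$V_i\cup V_j$ with $v_{i1}v_{j1}\notin E$ is an induced $C_4$ on $v_{i1},w,v_{j1}$ and any vertex of $V_0$ — wait, that is a $K_{1,3}$ plus an edge; one checks it contains $P_4$ or $C_4$). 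The upshot: either $w$ joins $V_0$ (new clique/coclique vertex absorbed into $V_0$, or forming a new class), giving $S(t_0+1, t_1,\dots,t_k)$ or $S(1, t_0, t_1,\dots,t_k)$-type, or $w$ joins $V_0\cup V_i$ for one $i$, which rearranges to a star mixed extension with $v_0$-class $V_i\cup\{w\}$ or $V_i$ enlarged — in every case the result is of the claimed form with $k$ or $k+1$ leaf-classes, and $k$ stays $\ge 1$. For $k=1$, $G-w$ is a mixed extension of $K_{1,1}=K_2$, i.e. a complete bipartite-or-split graph on two classes $V_0, V_1$ each a clique or coclique; here I would argue directly (again via the three forbidden subgraphs, now with $\diam(G)=2$ forcing at least one of the classes to be a coclique or $w$ to be non-adjacent to something) that $G$ is still a mixed extension of a star, possibly of $K_{1,2}$. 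I expect the bookkeeping of these attachment cases — especially verifying that the relabelling genuinely produces a valid type $(t_0,t_1,\dots)$ and that connectivity plus $G\in\mathscr{G}$ rules out the pathological attachments — to be the delicate part; the eigenvalue input is entirely through the already-established forbidden-subgraph lemmas, so no new spectral computation is needed beyond what the excerpt provides.
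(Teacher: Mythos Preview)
Your overall strategy --- induction on $n$, reducing to $G-w$ via Corollary~\ref{E-interlace}, then using Lemma~\ref{add} together with the forbidden subgraphs $\{P_4, C_4, P_3\cup K_1\}$ to control how $w$ reattaches --- is exactly the paper's approach. However, the case analysis in your inductive step contains a genuine error.

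You assert that $w$ can be adjacent to \emph{at most one} of the leaf classes $V_i$ ($i\ge 1$), arguing that if $w$ were complete to $V_i\cup V_j$ then $\{w, v_{01}, v_{i1}, v_{j1}\}$ would induce $C_4$, or at least contain an induced $P_4$ or $C_4$. This is false: since $w$ is already adjacent to every vertex of $V_0$ (by Lemma~\ref{add}), the subgraph induced on $\{w, v_{01}, v_{i1}, v_{j1}\}$ has all edges except $v_{i1}v_{j1}$; it is $K_4$ minus an edge, which is none of the three forbidden graphs (and your own parenthetical ``wait, that is a $K_{1,3}$ plus an edge'' already signals the difficulty). So $w$ being complete to two or more leaf classes is not ruled out at this stage. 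The correct trichotomy, which the paper establishes, is: (a)~$w$ adjacent to $V_0'$ only, giving $S(t_0', t_1', \ldots, t_k', 1)$; (b)~$w$ adjacent to $V_0'\cup V_s'$ for exactly one $s$, giving $S(t_0', \ldots, t_s'+1, \ldots, t_k')$; or (c)~$w$ adjacent to \emph{every} vertex of $G'$, giving $S(t_0'+1, t_1', \ldots, t_k')$. The paper excludes the intermediate cases --- $w$ meeting at least two leaf classes but missing some vertex $v$ --- by exhibiting $P_4$ on $\{w,v,v_{i1},v_{j1}\}$ when $v\in V_i'\cup V_j'$, and $P_3\cup K_1$ on the same set otherwise. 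Your ``upshot'' paragraph does eventually list $S(t_0+1,\ldots)$ as an outcome, so the conclusion is recoverable, but the argument leading there must be rewritten along these lines.

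A minor point: your separate treatment of $k=1$ is unnecessary. Since $\diam(G')=2$, the graph $G'$ is not complete, and any two-class representation $S(t_0',t_1')$ with a coclique part can be rewritten with the clique as centre and the coclique vertices as singleton leaf classes, yielding $k\ge 2$; this is the canonical form the paper uses (implicitly) and what Lemma~\ref{add} requires.
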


\begin{proof}

By Lemma \ref{pos-diam-2},  if $\diam(G) =1$ we get $G \cong K_n \cong S(t_0, n-t_0)$. We next set  $\diam(G) =2$.  If $G$ is a tree, by $\diam(G) = 2$ we get $G \cong S_{1,n-1} \cong S(1, 1,\ldots,1)$ ($n \geq 3$).

We now consider $G$ containing at least one cycle.  By $C_4, P_4 \nsubseteq G$ (Lemma \ref{forbidden}),  then $G$ only contains the triangle $C_3$. If $n=3$, then $G \cong C_3 \cong S(1,2)$. For $n=4$, clearly we get $G = S(1,1,2)$ or $S(2,1,1)$. When $n \geq 5$ we show the result by the induction. Assume that the theorem holds for the graphs with order $n-1$. As $G$ contains a cycle, there exists a vertex $w$ such that $G' = G-w$ is connected. If $\diam(G') =1$, then $G' \cong K_{n-1} \cong S(t_0, n-1-t_0) \in \mathscr{G}$. If $\diam(G') \geq 3$, then $P_4 \subseteq G' \subseteq G$  contradiction to Lemma \ref{forbidden}. Hence, $\diam(G') = 2$.

\begin{claim}
For $G \in \mathscr{G}$ and $w\in V(G)$, if $G' = G-w\subseteq G$ and $\diam(G') = 2$, then $G' \in \mathcal{G}$.
\end{claim}

{\bf Proof of the claim.}
From Lemma \ref{pos-diam-2} and $G \in \mathcal{G}$, we get $\diam(G) \leq 2$. Due to $\diam(G') = 2$, $\diam(G)=2$. Otherwise, $\diam(G)=1$, and so $\diam(G')=\diam(G-w)=1$, a contradiction. Thereby, for any two vertices $x,y \in V(G')$ we get $d_{G'}(x,y)=d_G(x,y)$.

For any $u\in V(G')$, we consider the values of $\varepsilon_{G'}(u)$ and $\varepsilon_{G}(u)$.
If $\varepsilon_{G'}(u)=1$, then we get $\varepsilon_{G}(u)=1$; otherwise, by $\diam(G)=2$ we get $\varepsilon_{G}(u)=2$, and hence $\varepsilon_{G'}(u)=2$, a contradiction.
If $\varepsilon_{G'}(u)=2$, we have $\varepsilon_{G}(u)=2$; or else, by $\diam(G)=2$ we have $\varepsilon_{G}(u)=1$, and thus $\varepsilon_{G'}(u)=1$, a contradiction again. Therefore, we obtain for any $u\in V(G')$ that $\varepsilon_{G'}(u)=\varepsilon_{G}(u)$.

From the above discussions, we know that $G$ and $G'$ satisfy the conditions of Corollary \ref{E-interlace}.
Hence, $$\xi_1(G) \geq \xi_1(G') \geq \xi_2(G) \geq \xi_2(G') \geq \cdots \geq \xi_{n-1}(G') \geq \xi_n(G)$$
which, along with $G \in \mathcal{G}$, leads to $G' \in \mathcal{G}$. \quad $\square$

By inductive hypothesis  and $G' \in \mathcal{G}$, we can set $G' \cong S(t_0', t_1', \ldots, t_k')$, where $\sum_{i=0}^{k} t'_i = n - 1$, $V'_i =\{v_{i1'}, \cdots, v_{it_i'}\}$, $k \geq 2$ and $t_i' \geq 1$ ($0 \leq i \leq k$). By Lemma \ref{add}, we get that  $w$ is adjacent to  all the vertices of $V'_0$.

If $w$ is not adjacent to any vertex of $V'_1 \cup \cdots \cup V'_k$, then $G \cong S(t_0', t_1', \ldots, t_k',1)$. If $w$ is adjacent to a vertex (say, $v_{s1'}$) of one set in $\{V'_1, \ldots, V'_k\}$ (say, $V'_s$), then $w$ must be adjacent to all vertices of $V'_s$. Otherwise, there exits a vertex $v_{st'_a} \in V'_s$ such that $w$ and $v_{st'_a}$ are not adjacent which implies $G[w, v_{s1'},v_{st'_a},v_{j1'}] \cong P_3 \cup K_1 \subseteq G$,  contradiction to Lemma \ref{forbidden}. Consequently, $G \cong S(t_0', t_1', \ldots, t_i'+1,\ldots, t_k')$. If $w$ is adjacent to the vertices of at least two sets in $\{V'_1, \ldots, V'_k\}$ (say, $V'_i$ and $V'_j$ ($0 \leq i \neq j \leq k$)), for $v_{i1'} \in V_i'$ and $v_{j1'} \in V_j'$ we conclude that $w$ must be adjacent to all vertices of $V'_1 \cup \cdots \cup V'_k$. Otherwise, there is a vertex $v \in V'_1 \cup \cdots \cup V'_k$ satisfying that $v$ and $w$ is not adjacent. In this case, $G[w,v,v_{i1'},v_{j1'}] \cong P_4$ if $v \in V'_i \cup V'_j$, or $G[w,v,v_{i1'},v_{j1'}] \cong P_3 \cup K_1$ if $v \notin V'_i \cup V'_j$, a contradiction. Therefore, $w$ is adjacent to all the vertices of $V'_1 \cup \cdots \cup V'_k$, and so $G\cong S(t_0'+1, t_1', \ldots, t_k')$.

This finishes the proof.
\end{proof}

\begin{prop}\label{positive}
Let $G= S(t_0, -p, t_1, \ldots, t_q)$ be a mixed extension with $p,q\geq 0$ and $t_j \geq 2$ $(1 \leq j \leq q)$. If $G \in \mathscr{G}$, then
\begin{itemize}
\item[$\mathrm{(i)}$]
$t_0 = 1$, $p + q \geq 1$;
\item[$\mathrm{(ii)}$]
$t_0 = 2$, $p, q \geq 0$;
\item[$\mathrm{(iii)}$]
$t_0 = 3$, $0 \leq q \leq 4 - p$ $(0 \leq p \leq 4)$;
\item[$\mathrm{(vi)}$]
$t_0 = 4$, $0 \leq q \leq 3 - p$ $(0 \leq p \leq 3)$;
\item[$\mathrm{(v)}$]
$t_0 \geq 5$, $0 \leq q \leq 2 - p$ $(0 \leq p \leq 2)$.
\end{itemize}
\end{prop}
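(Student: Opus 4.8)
The plan is to assume $G = S(t_0,-p,t_1,\ldots,t_q)\in\mathscr{G}$ and to argue according to the value of $t_0$, deriving in each range the stated restriction on $p$ and $q$. It helps to fix notation for the structure of $G$: its vertex set is $V_0\cup W\cup V_1\cup\cdots\cup V_q$, where the centre class $V_0$ is a clique of size $t_0$, $W$ is a coclique of $p$ leaves, each $V_j$ is a clique of $t_j\geq 2$ leaves, every vertex of $V_0$ is adjacent to every leaf, and distinct leaf classes are mutually non-adjacent.

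The two small cases are immediate. If $t_0=1$ and $p+q=0$, then $G$ is a single vertex, $\mathcal{A}(G)=(0)$ has no positive eigenvalue, and $G\notin\mathscr{G}$; hence $p+q\geq 1$, which is (i). If $t_0=2$, the asserted condition is just the standing hypothesis $p,q\geq 0$, so there is nothing to prove, giving (ii). (Consistently, a graph with $t_0=2$ contains none of $F_1$--$F_{12}$ as an induced subgraph: in $S(2,-p,t_1,\ldots,t_q)$ any clique on at least three vertices together with all of its common neighbours lies inside $V_0\cup V_j$ for a single $j$, which is itself a clique and so has no two mutually non-adjacent ``leaf'' parts, whereas every $F_i$ has a centre clique on at least three vertices dominating several such parts.)

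The core of the argument is the range $t_0\geq 3$. Put $m=5$ if $t_0=3$, $m=4$ if $t_0=4$, and $m=3$ if $t_0\geq 5$; then (iii), (iv), (v) say precisely that $p+q\leq m-1$. Assume to the contrary that $p+q\geq m$; I would derive a contradiction with Lemma \ref{F1-12} by exhibiting one of $F_1$--$F_{12}$ as an induced subgraph of $G$. Choose $c:=\min(t_0,5)\in\{3,4,5\}$ vertices of $V_0$; they induce $K_c$ and are adjacent to every leaf of $G$. If $p\geq 1$, set $p'=\min(p,m)$ and $q'=m-p'$; from $p+q\geq m$ one checks $q'\leq q$, so we may add $p'$ vertices of $W$ and two vertices from each of $q'$ of the classes $V_1,\ldots,V_q$ (each has at least $2$ vertices); the subgraph induced on these, together with the $c$ chosen centre vertices, is $S(c,-p',2,\ldots,2)$ with $q'$ twos, which is $F_1$, $F_2$ or $F_3$ when $c=5$, one of $F_4$--$F_7$ when $c=4$, and one of $F_8$--$F_{12}$ when $c=3$. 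If $p=0$, then $q\geq m$, and taking two vertices from each of $m-1$ of the classes $V_j$ and one single vertex from a further class yields $S(c,-1,2,\ldots,2)$ with $m-1$ twos, namely $F_3$ ($c=5$), $F_7$ ($c=4$) or $F_{12}$ ($c=3$). In every case $G$ has an induced copy of some $F_i$, contradicting Lemma \ref{F1-12}; hence $p+q\leq m-1$, which is exactly (iii)--(v).

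The points needing care are all routine. First, the case $p=0$ must be treated separately, since then no coclique leaf is available and the symbol $-1$ has to be simulated by selecting a single vertex of a clique leaf class; otherwise one would only produce $S(c,2,\ldots,2)$, which is not among the graphs of Table \ref{tab1}. Second, one must check that when $t_0>5$ the deletion of the surplus centre vertices still yields precisely $F_i$ and not a proper subgraph of it; this is clear because each surviving centre vertex remains adjacent to all the selected leaves and the selected leaf classes keep their adjacency pattern. Third, to invoke Lemma \ref{F1-12} one needs the eccentricity and distance data of $F_i$ to agree with those of $G$ so that $\mathcal{A}(F_i)$ is a principal submatrix of $\mathcal{A}(G)$, but this has already been folded into the statement of Lemma \ref{F1-12}, which we use as a black box. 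The only genuinely non-mechanical input is the knowledge, encoded in Table \ref{tab1}, that $F_1$--$F_{12}$ is an exhaustive list of the ``minimal forbidden'' parameter vectors for the three relevant centre sizes; granting that, the proposition reduces to the short selection argument above, with no analytic content.
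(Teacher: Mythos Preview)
Your argument is correct and follows essentially the same route as the paper's own proof: both rest entirely on Lemma \ref{F1-12}, showing that a violation of the stated bounds forces some $F_i$ from Table \ref{tab1} to occur as an induced subgraph. The paper runs through $t_0\geq 5$, $t_0=4$, $t_0=3$ in turn, first bounding $p$ via $F_1,F_4,F_8$ and then bounding $q$ for each fixed $p$ via the remaining $F_i$'s; you instead package the three cases with the single parameter $m\in\{3,4,5\}$ and the single inequality $p+q\le m-1$, which is a mild notational streamlining but not a different idea. Your explicit treatment of the $p=0$ case (simulating the ``$-1$'' part by picking a single vertex from a clique leaf class) is a point the paper leaves implicit when it writes, e.g., ``if $p=0,1$, then $q\le 2-p$ by $F_3\nsubseteq G$'', so in that respect your write-up is slightly more careful.
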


\begin{proof}
Let $t_0 \geq 5$. If $p \geq 3$, then $F_1 \subseteq G$ contradiction to Lemma \ref{F1-12}. Hence, $p \leq 2$. If $p = 2$, then $q = 0$ by $F_2 \nsubseteq G$ (see Lemma \ref{F1-12}). If $p = 0, 1$, then $q \leq 2 - p$ by $F_3\nsubseteq G$. Set $t_0 = 4$.  Since $F_{4} \nsubseteq G$, then $p \leq 3$. Due to $F_{5}, F_{6}, F_{7} \nsubseteq G$, we get $q \leq 3 - p$. Let $t_0 = 3$. By $F_8 \nsubseteq G$ we obtain $p \leq 4$. In view of $F_i \nsubseteq G$ ($i=9,10,11,12$), we have $q \leq 4 - p$ ($p \leq 3$). Since the order of $G$ is at least $2$, we get $p, q \geq 0$ if $t_0 = 2$  and  $p + q \geq 1$ if $t_0 =1$.
\end{proof}

So far, we have shown the sufficiency of Theorem \ref{E-main-1-one}. We next show it is necessary. Write $\{t_1, t_2,  \cdots, t_q\}$ as the multiset  $\{k_1 \cdot t_1, \cdots, k_h \cdot t_h\}$, where $k_i$ is the number of $t_i$ ($1\leq i \leq h$). Clearly,  $S(t_0, -p, t_1, \ldots, t_q) = S(t_0,-p,k_1 \cdot t_1, \cdots, k_h \cdot t_h)$ and $\sum\limits_{i=1}^{h}k_i= q$.

\begin{prop}\label{ES-eigenvalue}
Let $G= S(t_0,-p,k_1 \cdot t_1, \cdots, k_h \cdot t_h)$ be a mixed extension with $p,q\geq 0$ and $t_j \geq 2$ ($1 \leq j \leq h$). Under the conditions $(\rm i)-(\rm v)$ in Proposition \ref{positive}, $G$ has exactly one positive eigenvalue. Furthermore,
\begin{itemize}
\item[$\mathrm{(i)}$]
$p+q \leq 1$. For $t_0 \geq 1$, then $G\cong K_n$ and
\begin{equation*}
{\rm Spec}_\mathcal{A}(G)=
\left
\{\begin{array}{ccccccccccc}
n-1& -1  \\

1         &n-1
\end{array}
\right\}.
\end{equation*}
\item[\rm{(ii)}]
$p+q \geq 2$ and $p \geq 1$. If $t_0=1, q =0$ or $t_0=3, q=4-p$ or $t_0=4, q=3-p$, we get
{\footnotesize
\begin{equation*}
{\rm Spec}_\mathcal{A}(G)=
\left
\{\begin{array}{ccccccccccc}
\xi_{1}& 0       & -1  & -2&\xi_5&-2t_h&\xi_{7}&\cdots&-2t_2&\xi_{2h+3}&-2t_1\\

1          &n-t_0-p-q+1&t_0-1&p-1&    1     &k_h-1&  1            &\cdots&k_2-1&1&k_1-1
\end{array}
\right\},
\end{equation*}}
where $\xi_5\in(-2t_h,-2)$, $\xi_{5+2i} \in (-2t_{h-i},-2t_{h-i+1})$ $(1\leq i \leq h-1)$.

Otherwise, for the other cases in conditions $(\rm i)-(\rm v)$ of Proposition \ref{positive},
{\footnotesize$$ {\rm Spec}_\mathcal{A}(G)=
\left
\{\begin{array}{cccccccccccc}
\xi_{1}& 0       & \xi_3& -1  & -2&\xi_6&-2t_h&\xi_8&\cdots&-2t_2&\xi_{2h+4}&-2t_1\\

1          &n-t_0-p-q&     1      &t_0-1&p-1&    1     &k_h-1& 1            &\cdots&k_2-1&1&k_1-1
\end{array}
\right\},
$$}
where $\xi_6\in(-2t_h,-2)$, $\xi_{6+2i} \in (-2t_{h-i},-2t_{h-i+1})$ $(1\leq i \leq h-1)$.

\item[$\mathrm{(iii)}$]
$p+q \geq 2$ and $p = 0$. If $t_0 =3, q=4$ or $t_0=4, q=3$, then
$$\footnotesize {\rm Spec}_\mathcal{A}(G)=
\left
\{\begin{array}{cccccccccccc}
\xi_{1}& 0  & -1       &-2t_h&\xi_{5}&-2t_{h-1}&\cdots&-2t_2&\xi_{2h+1}&-2t_1\\
1          &n-t_0-q+1 &t_0-1     &k_h-1&    1            &k_{h-1}-1&\cdots&k_2-1&1&k_1-1
\end{array}
\right\},$$
where $\xi_{3+2i} \in (-2t_{h-i},-2t_{h-i+1})$ $(1\leq i \leq h-1)$.

Otherwise, for the other cases in conditions $(\rm i)-(\rm v)$ of Proposition \ref{positive},
$$\footnotesize {\rm Spec}_\mathcal{A}(G)=
\left
\{\begin{array}{ccccccccccccc}
\xi_{1}& 0  & \xi_{3}&-1       &-2t_h&\xi_{6}&-2t_{h-1}&\cdots&-2t_2&\xi_{2h+2}&-2t_1\\
1          &n-t_0-q &1&t_0-1&k_h-1&    1            &k_{h-1}-1&\cdots&k_2-1&1&k_1-1
\end{array}
\right\},$$
where $\xi_{4+2i} \in (-2t_{h-i},-2t_{h-i+1})$ and $1\leq i \leq h-1$.
\end{itemize}
\end{prop}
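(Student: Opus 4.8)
The plan is to compute the $\mathcal{A}$-spectrum of $G=S(t_0,-p,k_1\cdot t_1,\dots,k_h\cdot t_h)$ directly from the structure of its anti-adjacency matrix, exploiting the block structure coming from the partition of the vertex set into the clique/coclique classes. First I would record the distances and eccentricities in $G$: since $\mathrm{diam}(G)\le 2$ (Lemma \ref{pos-diam-2}), every entry of $\mathcal{A}(G)$ is $0$, $1$ (from adjacencies realizing eccentricity $1$), or $2$. The key observation is that vertices in the same class $V_i$ are \emph{twins} (same neighbourhood up to themselves and the same eccentricity), so $\mathcal{A}(G)$ has the form of a "blown-up" matrix: within a coclique class the block is $0$, within a clique class of size $t$ the block is $J-I$ or $2(J-I)$ depending on whether that class has eccentricity $1$, and between classes it is $0J$, $1J$, or $2J$. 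I would then argue that each class $V_i$ contributes $|V_i|-1$ eigenvectors supported on $V_i$ summing to zero, giving eigenvalue $0$ for a coclique, $-1$ for a clique of eccentricity $1$, and $-2$ for a clique/coclique contributing "$2$" blocks (this is exactly where the multiplicities $n-t_0-p-q$ or $n-t_0-p-q+1$, $t_0-1$, $p-1$, $k_i-1$ come from), reducing the problem to the much smaller quotient matrix on the classes.

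The second step is to analyze that quotient matrix $B$, a symmetric matrix of order roughly $h+3$ (or $h+2$ when $p=0$, or smaller in the degenerate cases $t_0\le 2$, $q=0$), whose entries are the between-class values scaled by class sizes together with the diagonal corrections. I would compute $\det(\lambda I - B)$ (or better, use an equitable-partition / Schur-complement argument), observing that the rows coming from the $t_i$-classes have a very uniform shape: each $V_j$-class of eccentricity $1$ sees $V_0$ with weight $1$ and all other vertices with weight $2$. Factoring appropriately, the characteristic polynomial of $B$ should reduce to a product of a low-degree polynomial (degree $3$, or $2$ when $p=0$, giving $\xi_1$, one negative root near $-1$ or $-2$, and the $\xi_3$) times $\prod_{i}(\lambda+2t_i)$-type factors "perturbed" by a rational function whose sign changes pin down the interlacing claims. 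Then I would verify the sign pattern: evaluating the relevant polynomial at $0$, at $-1$, at $-2$, and at each $-2t_i$ to show there is exactly one sign change on $(0,\infty)$ — hence exactly one positive eigenvalue — and to locate the remaining roots $\xi_5$ (or $\xi_6$) in $(-2t_h,-2)$ and $\xi_{5+2i}\in(-2t_{h-i},-2t_{h-i+1})$. The three special sub-cases ($t_0=1,q=0$; $t_0=3,q=4-p$; $t_0=4,q=3-p$) are precisely those where the polynomial acquires an extra factor of $(\lambda+2)$ or where $\xi_3$ merges with another eigenvalue, which I would treat by a direct substitution checking that $\lambda=0$ or $\lambda=-2$ is a root of higher multiplicity in exactly those parameter ranges (this matches the boundary $q=4-p$, $q=3-p$ coming from $F_8$–$F_{12}$ in Lemma \ref{F1-12}).

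For the bookkeeping I would organize the proof by the value of $t_0$ and by whether $p=0$, since the stated spectra have a uniform template with only the leading "head" $\{\xi_1,0,\xi_3,-1,-2,\dots\}$ changing; verifying that the multiplicities sum to $n$ (i.e. $(n-t_0-p-q)+1+1+(t_0-1)+(p-1)+1+\sum(k_i-1)+h=n$) is a useful sanity check at the end and I would include it. The case $p+q\le 1$ is trivial: $G\cong K_n$ and $\mathcal{A}(K_n)=A(K_n)$ has spectrum $\{n-1,(-1)^{n-1}\}$.

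The main obstacle I expect is not any single step but the proliferation of boundary cases: the quotient matrix changes size and shape depending on whether $p=0$, whether $q=0$, whether $t_0\le 2$ versus $t_0\ge 3$, and whether the parameters sit on the extremal lines $q=4-p$ (for $t_0=3$) or $q=3-p$ (for $t_0=4$), and in each of these the "extra" eigenvalue $\xi_3$ either appears, disappears into $0$, or collides with $-2$. Handling the interlacing inequalities $\xi_{5+2i}\in(-2t_{h-i},-2t_{h-i+1})$ uniformly — rather than case by case in $h$ — will require a clean induction or a monotonicity argument on the rational function $\lambda\mapsto \sum_i \frac{c_i}{\lambda+2t_i}$, and making that rigorous while keeping track of when $t_i$-values coincide (so that a factor $(\lambda+2t_i)^{k_i-1}$ already absorbs some multiplicity) is the delicate bookkeeping part of the argument.
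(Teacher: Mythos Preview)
Your plan is essentially the paper's proof: the authors write $\mathcal{A}(G)$ in exactly the block form you describe, factor off the eigenvalues $0$, $-1$, $-2$, $-2t_i$ with the stated multiplicities coming from the within-class eigenvectors, and are left with a polynomial $h(\lambda)$ of degree $h+2$ (or $l(\lambda)$ of degree $h+1$ when $p=0$) which they analyze by the same sign-change method you outline, evaluating at $-2t_i$, $-2$, $-1$, $0$ to locate the remaining roots. One small correction to your bookkeeping: in the special sub-cases ($t_0=1,\,q=0$; $t_0=3,\,q=4-p$; $t_0=4,\,q=3-p$) the computation gives $h(0)=0$ (resp.\ $l(0)=0$), so the extra root $\lambda_2$ always merges with $0$, never with $-2$.
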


\begin{proof}
$\mathrm{(i)}$
If $p+q \leq 1$, then $G=K_n$ is the mixed extension $S(t_0, n-t_0)$ with
$$ {\rm Spec}_{\mathcal{A}}(G)=
\left
\{\begin{array}{ccccccccccc}
n-1& -1  \\

1         &n-1
\end{array}
\right\}.
$$

$\mathrm{(ii)}$
Let $p+q \geq 2$ and $p\geq 1$. Labelling the vertices of $G$ properly, we get
\begin{equation}\label{G-matrix}
\mathcal{A}(G) = \begin{pmatrix}
                        J-I &    J    & J  & J  & \ldots & J \\
                        J   & 2(J-I)  & 2J & 2J & \ldots & 2J \\
                        J   &    2J   & 0  & 2J & \ldots & 2J \\
                       \vdots& \vdots &\vdots& \vdots & \ddots & \vdots \\
                        J   &    2J   & 2J & 2J & \ldots & 0
       \end{pmatrix},
\end{equation}
with  the $\mathcal{A}$-polynomial
\begin{eqnarray}\label{g1}
\phi(G,\lambda)=\lambda^{n-t_0-p-q}(\lambda+1)^{t_0-1}(\lambda+2)^{p-1}g_{t_0,-p,\cdots,t_q}(\lambda),
\end{eqnarray}
where
\begin{equation}\label{g}
g_{t_0,-p,\cdots,t_q}(\lambda)=\prod_{i=1}^{h}(\lambda+2t_{i})^{k_i-1}h(\lambda)
\end{equation}
and
\begin{multline*}
h(\lambda) = (\lambda^2-2p\lambda-t_0\lambda+3\lambda+t_0p-2t_0-2p+2)
\prod_{i=1}^{h}(\lambda+2t_i)-(\lambda+2) (2\lambda-t_0+2)\sum_{j=1}^{h}(k_jt_j\prod\limits_{\substack{i=1\\i \neq j}}^{h}(\lambda+2t_i)).
\end{multline*}
Obviously, the polynomial \eqref{g} has $q+2$ real roots denoted by $\lambda_1 \geq \cdots \geq \lambda_{q+2}$.
When $k_i \geq 2$, $-2t_{i}$ is a root of ($\ref{g}$) with multiplicity $k_i-1$ ($1\leq i \leq h$). Thus, the number of these roots is $q-h$, and hence the remaining $h+2$ roots of ($\ref{g}$) are the roots of $h(\lambda)$.
For $2\leq i\leq h$, we have
\begin{eqnarray*}
h(-2t_{i-1})h(-2t_{i})\!\!\!\!&=&\!\!\!\!k_{i-1}t_{i-1}k_it_i
(2t_{i-1}-2)(2t_i-2)(4t_{i-1}+t_0-2)(4t_{i}+t_0-2)
\\&\;&\times[\prod\limits_{j=1}^{i-2}(2t_j-2t_{i-1})(2t_j-2t_{i})]
(t_i-t_{i-1})(t_{i-1}-t_i)
\\&\;&\times [\prod\limits_{j=i+1}^{h}(2t_j-2t_{i-1})(2t_j-2t_{i})].
\end{eqnarray*}
Since $k_{i-1}t_{i-1}k_it_i (2t_{i-1}-2)(2t_i-2)(4t_{i-1}+t_0-2)(4t_{i}+t_0-2)>0$, $\prod\limits_{j=1}^{i-2}(2t_j-2t_{i-1})(2t_j-2t_{i}) > 0$,
$(t_i-t_{i-1})(t_{i-1}-t_i) < 0$ and $\prod\limits_{j=i+1}^{h}(2t_j-2t_{i-1})(2t_j-2t_{i}) > 0$, then $h(-2t_{i-1})h(-2t_{i}) <0$. Therefore, there is a root of $h(\lambda)$ that is in $(-2t_{i-1}, -2t_i)$ ($2\leq i\leq h$). At present, we have got $q-1$ roots of \eqref{g}. For  $G \in \mathscr{G}$ we get $\lambda_{1} >0$, and so we need  find the other two roots of $h(\lambda)$.

Since $p\geq 1$, then $h(-2t_h) = -k_ht_h(2t_h+2) (t_0+4t_h-2)\prod\limits_{i=1}^{h-1}(-2t_h+2t_i) <0$, $h(-2)= p(t_0+2)\prod\limits_{i=1}^{h}(2t_i-2) >0$. Hence, $\lambda_{3} \in (-2t_h, -2)$. A direct calculation shows  that
$h(-1)= t_0(p-1)\prod\limits_{i=1}^{h}(2t_i-1)+t_0\sum\limits_{j=1}^{h}(k_jt_j\prod\limits_{\substack{i=1\\ i \neq j}}^{h}(2t_i-1))$ and $h(-1) > 0$ for $p+q \geq 2$. As well,
{\small
\begin{equation}\nonumber
\begin{split}
h(0)&=(t_0p-2t_0-2p+2)
\prod\limits_{i=1}^{h}2t_i-2(-t_0+2)\sum\limits_{j=1}^{h}(2k_jt_j\prod\limits_{\substack{i=1\\i \neq j}}^{h}t_i)= 2^h(qt_0+pt_0-2t_0-2q-2p+2)\prod\limits_{i=1}^{h}t_i.
\end{split}
\end{equation}
}

We consider the conditions (i)--(v) in Proposition \ref{positive}.
If $t_0 = 1$, then $q \geq 0$. When $q=0$, $h(0)= 2^h(-q)\prod\limits_{i=1}^{h}t_i =0$. Otherwise, $h(0)<0$.
If $t_0 = 2$, $q\geq 0$ and so $h(0)= 2^h(-2)\prod\limits_{i=1}^{h}t_i <0$.
If $t_0 = 3$, then $0 \leq q \leq 4-p$.
$h(0)= 2^h(p+q-4)\prod\limits_{i=1}^{h}t_i <0$ for $q<4-p$ and
$h(0)=0$ for $q=4-p$.
If $t_0 = 4$, we get $0 \leq q \leq 3-p$. For $q<3-p$, $h(0)= 2^h(2p+2q-6)\prod\limits_{i=1}^{h}t_i <0$. Or else, $h(0)=0$.
If $t_0 \geq 5$, $0 \leq q \leq 2-p$ and hence $h(0)= 2^h(3p+3q-8)\prod\limits_{i=1}^{h}t_i <0$.

Consequently, for the cases $t_0=1, q =0$, or $t_0=3,q=4-p$, or $t_0=4,q=3-p$ we get $\lambda_{2} = 0$, and
{\footnotesize
$${\rm Spec}_\mathcal{A}(G)=
\left
\{\begin{array}{ccccccccccc}
\xi_{1}& 0       & -1  & -2&\xi_5&-2t_h&\xi_{7}&\cdots&-2t_2&\xi_{2h+3}&-2t_1\\

1          &n-t_0-p-q+1&t_0-1&p-1&    1     &k_h-1&  1            &\cdots&t_2-1&1&k_1-1
\end{array}
\right\},
$$}
where $\xi_5\in(-2t_h,-2)$ and $\xi_{5+2i} \in (-2t_{h-i},-2t_{h-i+1})$ $(1\leq i \leq h-1)$. Otherwise, $\lambda_{2} \in(-1,0)$  and thus
{\footnotesize
$${\rm Spec}_\mathcal{A}(G)=
\left
\{\begin{array}{cccccccccccc}
\xi_{1}& 0       & \xi_3& -1  & -2&\xi_6&-2t_h&\xi_8&\cdots&-2t_2&\xi_{2h+4}&-2t_1\\

1          &n-t_0-p-q&     1      &t_0-1&p-1&    1     &k_h-1& 1            &\cdots&t_2-1&1&k_1-1
\end{array}
\right\},
$$}
where $\xi_6\in(-2t_h,-2)$ and $\xi_{6+2i} \in (-2t_{h-i},-2t_{h-i+1})$ $(1\leq i \leq h-1)$.\medskip

$\mathrm{(iii)}$
Let $p+q \geq 2$ and $p=0$, i.e., $q\geq 2$. Labelling the vertices of $G$ properly, we get
\begin{equation}\label{G-matrix}
\mathcal{A}(G) = \begin{pmatrix}
                        J-I &  J  & J  & \ldots & J \\

                        J   &  0  & 2J & \ldots & 2J \\
                       \vdots&\vdots& \vdots & \ddots & \vdots \\
                        J   &   2J & 2J & \ldots & 0
       \end{pmatrix},
\end{equation}
with  the $\mathcal{A}$-polynomial $$\phi(G,\lambda)=\lambda^{n-t_0-q}(\lambda+1)^{t_0-1}\prod_{i=1}^{h}(\lambda+2t_{i})^{k_i-1}l(\lambda)$$ where
$$l(\lambda)= (\lambda-t_0+1)\prod_{i=1}^{h}(\lambda+2t_i)-(2\lambda-t_0+2)\sum_{j=1}^{h}(k_jt_j\prod\limits_{\substack{i=1\\i\neq j}}^{h}(\lambda+2t_i)).
$$
A straightforward calculation shows that
{\small
\begin{eqnarray*}
l(-1)\!\!\!\!&=&\!\!\!\! -t_0\prod\limits_{i=1}^{h}(2t_i-1)+t_0\sum\limits_{j=1}^{h}(k_jt_j\prod\limits_{\substack{i=1\\i \neq j}}^{h}(2t_i-1)) \\
\!\!\!\!&=&\!\!\!\! t_0[\sum\limits_{j=1}^{2}((k_j-1)t_j\prod\limits_{\substack{i=1\\i\neq j }}^{h}(2t_i-1))+ (t_1-t_2)\prod\limits_{i=3}^{h}(2t_i-1)+\sum\limits_{j=3}^{h}(k_jt_j\prod\limits_{\substack{i=1\\i\neq j }}^{h}(2t_i-1))+\prod\limits_{i=2}^{h}(2t_i-1)]\\
\!\!\!&>&\!\!\!0
\end{eqnarray*}
}
and
$l(0)= (-t_0+1)\prod\limits_{i=1}^{h}2t_i-(-t_0+2)\sum\limits_{j=1}^{h}(2k_jt_j\prod\limits_{\substack{i=1\\ i\neq j}}^{h}t_i)
=2^{h-1}(qt_0-2t_0-2q+2)\prod\limits_{i=1}^{h}t_i$.

Under the conditions (i)--(v) in Proposition \ref{positive} , we consider the following cases.
If $t_0 = 1$, then $q\geq 2$ and $l(0)= -2^{h-1}q\prod\limits_{i=1}^{h}t_i <0$.
If $t_0 = 2$, $l(0)= -2^{h}\prod\limits_{i=1}^{h}t_i <0$.
If $t_0 = 3$, we get $2 \leq q \leq 4$. For $2<q<4$, $l(0)= 2^{h-1}(q-4)\prod\limits_{i=1}^{h}t_i <0$. Otherwise, $l(0)=0$.
If $t_0 = 4$, $2 \leq q \leq 3$. $l(0)= 2^{h-1}(2q-6)\prod\limits_{i=1}^{h}t_i <0$ for $q<3$ and $l(0)=0$ for $q=3$.
If $t_0 \geq 5$, then $l(0)= -2^{h}\prod\limits_{i=1}^{h}t_i <0$.

Consequently, for $(t_0,q) = (3,4)$ or $(t_0,q) = (4,3)$
we have $\lambda_{2} = 0$, and
$$\footnotesize {\rm Spec}_{\mathcal{A}}(G)=
\left
\{\begin{array}{cccccccccccc}
\xi_{1}& 0  & -1       &-2t_h&\xi_{5}&-2t_{h-1}&\cdots&\xi_{2h+1}&-2t_1\\
1          &n-t_0-q+1 &t_0-1     &k_h-1&    1            &k_{h-1}-1&\cdots&1&k_1-1
\end{array}
\right\}$$
where $\xi_{3+2i} \in (-2t_{h-i},-2t_{h-i+1})$ $(1\leq i \leq h-1)$; Otherwise, $\lambda_{2} \in(-1,0)$ and
$$\footnotesize {\rm Spec}_{\mathcal{A}}(G)=
\left
\{\begin{array}{ccccccccccccc}
\xi_{1}& 0  & \xi_{3}&-1       &-2t_h&\xi_{6}&-2t_{h-1}&\cdots&\xi_{2h+2}&-2t_1\\
1          &n-t_0-q &1&t_0-1&k_h-1&    1            &-2t_{h-1}&\cdots&1&k_1-1
\end{array}
\right\}$$
with $\xi_{4+2i} \in (-2t_{h-i},-2t_{h-i+1})$ $(1\leq i \leq h-1)$.
\end{proof}

\noindent{\bf Proof of Theorem \ref{E-main-1-one}}. This theorem follows from Propositions \ref{E-one-positive}, \ref{positive} and \ref{ES-eigenvalue}.

\begin{re}
Recently, Sorgun and K\"{u}\c{c}\"{u}k \cite{sor-huc} independently studied the graphs with exactly one positive $\mathcal{A}$-eigenvalue. In their paper, $S(4,-t_1,t_2,t_3,t_4)$ and $S(3,-t_1,t_2,t_3,t_4,t_5)$ $(t_i\geq 1)$ are regarded as such graphs in \cite[Theorem 2.11 (iv)]{sor-huc}. However, in Table \ref{tab1}, we can see that $F_7=S(4,-1,2,2,2)$ and $F_{12}=S(3,-1,2,2,2,2)$ have more than one positive $\mathcal{A}$-eigenvalues.  On the other side, the first author of this paper, Jianfeng Wang,  has been discussing with Sezer Sorgun about this topic. Through communications, all the authors agreed to combine these two papers into the current version.
\end{re}

\section{\large Graphs with all but at most two $\mathcal{A}$-eigenvalues equal to $-2$ and $0$}

Let $K_{n_1, n_2, \ldots, n_l}$ be the complete multipartite graph with $n_1 \geq n_2 \geq \cdots \geq n_l$.

\begin{lem}{\rm \cite{{wang-li-CILS}}}\label{poly}
Let $G \cong K_{n_0}\vee K_{n_1,\ldots, n_l}$ with $n_0 \geq 1$, $n_r \geq 2$ and $l \geq 2$ $(1 \leq r \leq l)$. Then
$$\phi(G,\lambda)= (\lambda + 1)^{n_0 - 1}(\lambda + 2)^{n-n_0-l}[(\lambda - n_0 + 1)\prod\limits_{r=1}^{l}(\lambda-2n_r+2)-n_0 \sum\limits_{r=1}^{l}\prod\limits_{\substack{s=1\\r\neq s}}^{l}n_r(\lambda - 2n_s + 2)].$$
\end{lem}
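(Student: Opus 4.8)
The plan is to write $\mathcal{A}(G)$ down explicitly as a block matrix and then read off its characteristic polynomial from the equitable partition of $V(G)$ into the $K_{n_0}$-part together with the $l$ colour classes of the multipartite factor. The first step is to record the metric structure of $G=K_{n_0}\vee K_{n_1,\dots,n_l}$: since the $K_{n_0}$-part is joined to everything, each of its vertices is adjacent to all others and so has eccentricity $1$, while, because every $n_r\ge 2$, a vertex of a colour class $V_r$ lies at distance $2$ from the other vertices of $V_r$ and at distance $1$ from everything outside $V_r$, hence has eccentricity $2$. Substituting into \eqref{A-defi}, the $uv$-entry of $\mathcal{A}(G)$ equals $1$ when $u,v$ both lie in the $K_{n_0}$-part or exactly one of them does, equals $2$ when $u,v$ lie in the same class $V_r$, and equals $0$ when $u\in V_r$ and $v\in V_s$ with $r\ne s$. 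Ordering the vertices as $K_{n_0},V_1,\dots,V_l$, this exhibits $\mathcal{A}(G)$ as the arrowhead-type block matrix with diagonal blocks $J-I$ of order $n_0$ and $2(J-I)$ of order $n_r$, all-ones blocks joining the $K_{n_0}$-part to each $V_r$, and zero blocks between distinct classes.

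Next I would split off the easy eigenvectors, using that the partition $\{K_{n_0},V_1,\dots,V_l\}$ is equitable for $\mathcal{A}(G)$ since every block has constant row sums. Writing $P$ for the $n\times(l+1)$ characteristic matrix of the partition, this says $\mathcal{A}(G)P=PB$, where $B$ is the $(l+1)\times(l+1)$ quotient matrix of block row sums: $B_{00}=n_0-1$, $B_{0r}=n_r$, $B_{r0}=n_0$, $B_{rr}=2(n_r-1)$ for $1\le r\le l$, and $B_{rs}=0$ for $1\le r\ne s\le l$. Since $\mathcal{A}(G)$ is symmetric, $\mathbb{R}^n$ decomposes as $\mathrm{col}(P)\oplus\mathrm{col}(P)^{\perp}$ with both summands $\mathcal{A}(G)$-invariant, so $\phi(G,\lambda)=\det(\lambda I-B)\,\chi(\lambda)$, where $\chi$ is the characteristic polynomial of $\mathcal{A}(G)$ restricted to $\mathrm{col}(P)^{\perp}$. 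On the part of $\mathrm{col}(P)^{\perp}$ supported on the $K_{n_0}$-part (dimension $n_0-1$) the all-ones blocks annihilate zero-sum vectors, so $\mathcal{A}(G)$ acts there as $J-I=-I$; on the part supported on a single $V_r$ with zero coordinate sum (total dimension $\sum_{r=1}^l(n_r-1)=n-n_0-l$) it acts as $2(J-I)=-2I$. Hence $\chi(\lambda)=(\lambda+1)^{n_0-1}(\lambda+2)^{n-n_0-l}$.

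Finally I would evaluate the arrowhead determinant $\det(\lambda I-B)$. Setting $d_r=\lambda-2n_r+2$ and taking the Schur complement of the diagonal block $\mathrm{diag}(d_1,\dots,d_l)$ of $\lambda I-B$, one gets $\det(\lambda I-B)=\bigl(\prod_{r=1}^l d_r\bigr)\bigl(\lambda-n_0+1-n_0\sum_{r=1}^l n_r/d_r\bigr)=(\lambda-n_0+1)\prod_{r=1}^l d_r-n_0\sum_{r=1}^l n_r\prod_{s\ne r}d_s$, which is exactly the bracketed factor in the statement; multiplying by $\chi(\lambda)$ yields the claimed identity.

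The computations are all elementary, so the only points that need care are bookkeeping ones. In the first step one must use $n_r\ge 2$ to be sure every colour-class vertex genuinely has eccentricity $2$, which is what forces the entry $2$ (rather than $0$) inside each $V_r$-block and is the ultimate source of the factors $\lambda-2n_r+2$. In the last step the Schur-complement manipulation divides by the $d_r$, so one should either carry it out only for the cofinite set of $\lambda$ with all $d_r\ne 0$ and then invoke that both sides are polynomials in $\lambda$, or else replace it by a direct cofactor expansion along the first row of $\lambda I-B$, which never divides. Neither difficulty is serious, and I expect the proof to be short.
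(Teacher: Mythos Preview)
Your argument is correct and complete. The paper does not supply a proof of this lemma at all; it is quoted from \cite{wang-li-CILS} and used as a black box in the proof of Lemma~\ref{two}. So there is no in-paper proof to compare against, but your approach---computing the eccentricities to obtain the block form of $\mathcal{A}(G)$, then using the equitable partition $\{K_{n_0},V_1,\dots,V_l\}$ to split off the $(\lambda+1)^{n_0-1}(\lambda+2)^{n-n_0-l}$ factor and reduce to the $(l+1)\times(l+1)$ quotient, followed by a Schur/arrowhead expansion---is the natural one and matches exactly how the paper handles the closely related matrix in \eqref{G-matrix} and the polynomial factorisations \eqref{g1}--\eqref{g}. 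Your cautions about needing $n_r\ge 2$ for the eccentricity-$2$ claim and about clearing denominators in the Schur step are apt and sufficient.
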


\begin{lem}{\rm \cite{{wang-li-CILS}}}\label{least}
Let $G$ be a graph with order $n$ and the least $\mathcal{A}$-eigenvalue $\xi_n(G)$. Then  $\xi_n(G) = -2$  if and only if
\begin{itemize}
\item[$\mathrm{(i)}$]
$G \cong K_{n_1, n_2, \ldots, n_l}$, where $l\geq 2$ and $n_r \geq 2$ $(1 \leq r \leq l)$;
\item[$\mathrm{(ii)}$]
$G \cong K_{n_0}\vee K_{n_1,\ldots, n_l}$, where $n_r \geq 2$ $(1 \leq r \leq l)$ and  $2 \leq l \leq 4$ if $n_0 = 1$, $2 \leq l \leq 3$ if $n_0 = 2$ or  $l = 2$ if $n_0 \geq 3$.
\end{itemize}
\end{lem}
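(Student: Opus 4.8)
The plan is to combine the eccentricity partition of a diameter-two graph with Cauchy interlacing (Lemma~\ref{cauchy} and Corollary~\ref{E-interlace}) and a short positive‑semidefiniteness argument applied to $\mathcal A(G)+2I$; the closed‑form characteristic polynomial is available from Lemma~\ref{poly}, but it is cleaner to work with $\mathcal A(G)+2I$ directly. First I would reduce to $\diam(G)\le 2$. If $\diam(G)=1$ then $\mathcal A(G)=A(K_n)=J-I$, whose least eigenvalue is $-1\ne-2$; and if $\diam(G)\ge 3$ one rereads the case analysis in the proof of Lemma~\ref{pos-diam-2} almost verbatim: each of the explicit $4\times 4$ principal submatrices $W_2,\dots,W_6$ occurring there has least eigenvalue at most $-d$ (up to relabelling they are $d$-scaled adjacency matrices of paths or near-paths, or else carry $-d$ among their eigenvalues), which is $<-2$ once $d=\diam(G)\ge3$; Cauchy interlacing then forces $\xi_n(G)<-2$. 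So from now on $\diam(G)=2$.

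Next, partition $V(G)=U\cup W$ into the vertices of eccentricity $1$ and those of eccentricity $2$; since $\diam(G)=2$ we have $W\ne\varnothing$, and each vertex of $U$ is adjacent to all others, so $U$ is a clique joined completely to $W$ and $G\cong K_{n_0}\vee G[W]$ with $n_0=|U|\ge 0$. Reading off \eqref{A-defi}, for $w,w'\in W$ one has $\epsilon_{ww'}=2$ if $ww'\notin E(G)$ and $\epsilon_{ww'}=0$ otherwise, so the principal submatrix $\mathcal A(G)[W]$ equals $2A\big(\overline{G[W]}\big)$. Interlacing applied to this submatrix shows that $\xi_n(G)\ge-2$ forces $\lambda_{\min}\big(\overline{G[W]}\big)\ge-1$, hence $\overline{G[W]}$ is a disjoint union of cliques and $G[W]\cong K_{n_1,\dots,n_l}$ is complete multipartite; every part has at least two vertices, since a singleton part would be a vertex of $W$ adjacent to all of $W$ and of $U$, hence of eccentricity $1$. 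If $n_0=0$ this is case (i). If $n_0\ge1$, write $\mathcal A(G)+2I$ in block form with $U$-block $J_{n_0}+I_{n_0}\succ0$, $W$-block $2\big(J_{n_1}\oplus\cdots\oplus J_{n_l}\big)$, and all-ones coupling between the blocks. The $W$-block kills the $\sum_r(n_r-1)\ge1$ vectors supported on $W$ and orthogonal to every part-indicator, and these are $\mathcal A(G)$-eigenvectors for $-2$; thus $-2$ is always an eigenvalue and $\xi_n(G)=-2\iff\mathcal A(G)+2I\succeq0$. Taking the Schur complement on the positive definite $U$-block and evaluating the resulting quadratic form on the part-indicators reduces $\mathcal A(G)+2I\succeq0$ to the inequality $2\sum_{r=1}^{l}y_r^2\ge\frac{n_0}{n_0+1}\big(\sum_{r=1}^{l}y_r\big)^2$ for all $y\in\mathbb R^l$, i.e.\ by Cauchy--Schwarz to $l\le 2+\tfrac{2}{n_0}$, which is exactly $l\le4$ for $n_0=1$, $l\le3$ for $n_0=2$, and $l\le2$ for $n_0\ge3$: this is case (ii).

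The converse is the same computation read forwards: a graph in family (i) is self-centred of eccentricity $2$, so $\mathcal A(G)=2A(\overline G)=2\big(A(K_{n_1})\oplus\cdots\oplus A(K_{n_l})\big)$, whose spectrum is $\{\,2(n_r-1):1\le r\le l\,\}$ together with $-2$ of multiplicity $n-l\ge1$, so $\xi_n(G)=-2$; a graph in family (ii) has $l\le 2+\tfrac2{n_0}$, so the Schur-complement criterion yields $\mathcal A(G)+2I\succeq0$, and as noted $-2$ is already an eigenvalue, so $\xi_n(G)=-2$. The step I expect to be most fiddly is the diameter reduction --- verifying that \emph{every} $4\times4$ submatrix arising in the proof of Lemma~\ref{pos-diam-2} has an eigenvalue strictly below $-2$ when $d\ge3$ --- whereas the structural classification and the semidefiniteness criterion become routine once the block decomposition of $\mathcal A(G)+2I$ is in place.
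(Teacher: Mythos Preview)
The paper does not prove this lemma; it is quoted from \cite{wang-li-CILS}, so there is no in-paper argument to compare yours against. Your strategy---reduce to diameter two, partition by eccentricity, recognise $\mathcal A(G)[W]=2A(\overline{G[W]})$, and test $\mathcal A(G)+2I\succeq 0$ via the Schur complement on the positive-definite $U$-block---is correct and efficient. The diameter reduction does go through: each of $W_2,\dots,W_6$ has least eigenvalue $\le -d<-2$ once $d\ge 3$ (for $W_6$ the least eigenvalue is $-\sqrt{\mu_+}$ with $\mu_+=\tfrac12\bigl(3d^2-4d+2+d\sqrt{5d^2-8d+4}\bigr)>4$), and the Schur complement of $J_{n_0}+I_{n_0}$ in $\mathcal A(G)+2I$ is exactly $2\bigl(\bigoplus_r J_{n_r}\bigr)-\tfrac{n_0}{n_0+1}J$.

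Two remarks. First, a small bookkeeping slip: with $x=\sum_r y_r\mathbf 1_{V_r}$ the Schur-complement quadratic form is $2\sum_r n_r^2y_r^2-\tfrac{n_0}{n_0+1}\bigl(\sum_r n_ry_r\bigr)^2$; after the substitution $z_r=n_ry_r$ you recover your displayed inequality and hence $l\le 2+2/n_0$. Second, and more substantively, your argument produces $1\le l\le 2+2/n_0$ in the $n_0\ge1$ branch, not $2\le l$. The $l=1$ family $K_{n_0}\vee\overline{K_{n_1}}$ with $n_1\ge 2$---for instance $P_3=K_1\vee\overline{K_2}$, whose $\mathcal A$-spectrum is $\{1+\sqrt3,\,1-\sqrt3,\,-2\}$---genuinely has least $\mathcal A$-eigenvalue $-2$, as your own eigenvector and Schur-complement computations already show. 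So your proof is \emph{correct}, but it proves slightly more than the lemma as stated here: the lower bound $l\ge2$ in item~(ii) appears to be an omission in this paper's transcription rather than a gap in your reasoning, and you should flag it rather than try to force your argument to match it.
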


Note that the connected graphs with order $n \geq 2$ have at least one positive $\mathcal{A}$-eigenvalue. Let $\mathscr{S}$ denote the set of connected graphs with all but at most two $\mathcal{A}$-eigenvalues equal to $-2$ and $0$. For $G \in \mathscr{S}$,  $G$ might have exactly two positive $\mathcal{A}$-eigenvalues, or only one positive $\mathcal{A}$-eigenvalue, or one positive and one negative $\mathcal{A}$-eigenvalue different from $-2$ and $0$. In the next lemma, we consider the first case.

\begin{lem}\label{two}
Let $G \in \mathscr{S}$ with order $n$. Then $G$ has two positive $\mathcal{A}$-eigenvalues if and only if $G \cong K_{n_1,n_2}$ with $n_1,n_2 \geq 2$.
\end{lem}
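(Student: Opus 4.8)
The plan is to prove the two implications separately: sufficiency by computing $\mathcal{A}(K_{n_1,n_2})$ directly, and necessity by first forcing $\xi_n(G)=-2$ and then applying the classification of graphs with least $\mathcal{A}$-eigenvalue $-2$ in Lemma~\ref{least}. For sufficiency, if $G\cong K_{n_1,n_2}$ with $n_1,n_2\ge 2$ then $\diam(G)=2$ and every vertex has eccentricity $2$, so $\epsilon_{uv}=2$ when $u,v$ lie in a common part and $\epsilon_{uv}=0$ otherwise; hence $\mathcal{A}(G)=2(J_{n_1}-I)\oplus 2(J_{n_2}-I)$, whose spectrum consists of $2(n_1-1)$, $2(n_2-1)$, and $-2$ with multiplicity $n_1+n_2-2$. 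This has exactly two positive eigenvalues (each $\ge 2$) and all remaining eigenvalues equal to $-2$, so $G\in\mathscr{S}$ with two positive $\mathcal{A}$-eigenvalues.

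For necessity, let $G\in\mathscr{S}$ have exactly two positive $\mathcal{A}$-eigenvalues $\xi_1\ge\xi_2>0$. Since membership in $\mathscr{S}$ permits at most two eigenvalues outside $\{-2,0\}$ and $\xi_1,\xi_2$ are already two such, every remaining $\xi_i$ lies in $\{-2,0\}$. The graphs on at most two vertices ($K_1$, $K_2$) have at most one positive $\mathcal{A}$-eigenvalue, so $n\ge 3$ and hence $\xi_n\in\{-2,0\}$; moreover $\xi_n=0$ would force $\xi_3=\cdots=\xi_n=0$ and then ${\rm tr}\,\mathcal{A}(G)=\xi_1+\xi_2>0$, contradicting ${\rm tr}\,\mathcal{A}(G)=0$. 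Therefore $\xi_n(G)=-2$, and Lemma~\ref{least} places $G$ in one of its two families.

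If $G\cong K_{n_1,\dots,n_l}$ with $l\ge 2$ and all $n_r\ge 2$, then exactly as above $\mathcal{A}(G)=\bigoplus_{r=1}^{l}2(J_{n_r}-I)$ has precisely $l$ positive eigenvalues $2(n_r-1)$; having exactly two then forces $l=2$, giving $G\cong K_{n_1,n_2}$ with $n_1,n_2\ge 2$ as wanted. So it remains to rule out $G\cong K_{n_0}\vee K_{n_1,\dots,n_l}$. By Lemma~\ref{poly} the number $-1$ is an $\mathcal{A}$-eigenvalue of multiplicity $n_0-1$, so $n_0\ge 2$ would yield a third eigenvalue outside $\{-2,0\}$ besides $\xi_1,\xi_2$, impossible; hence $n_0=1$ and $2\le l\le 4$, and Lemma~\ref{poly} gives $\phi(G,\lambda)=(\lambda+2)^{\,n-1-l}h(\lambda)$ with $\deg h=l+1$. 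A short evaluation gives $h(0)=-2^{l-1}(-1)^{l-1}\sum_{r}n_r\prod_{s\ne r}(n_s-1)\ne 0$ and $h(-2)=(-2)^{l-1}(4-l)\prod_{r}n_r$. For $l\in\{2,3\}$ neither $0$ nor $-2$ is a root of $h$, so all $l+1\ge 3$ roots of $h$ lie outside $\{-2,0\}$, a contradiction.

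The remaining and most delicate case is $l=4$, where $h(-2)=0$ so the root-counting argument no longer closes things off. I plan to finish it in either of two ways. First, computing the derivative gives $h'(-2)=16\prod_r n_r+4\sum_r\prod_{s\ne r}n_s>0$, so $-2$ is a \emph{simple} root of the degree-$5$ polynomial $h$; its other four roots still avoid $\{-2,0\}$, again a contradiction. Alternatively, since $G=K_1\vee K_{n_1,n_2,n_3,n_4}$ contains $K_1\vee K_{2,2,2,2}$ as an induced subgraph preserving all eccentricities and distances, Lemma~\ref{E-sub} and Corollary~\ref{E-interlace} give $\xi_4(G)\ge\xi_4(K_1\vee K_{2,2,2,2})$, while a direct computation (via the equitable partition into the apex vertex and the four parts) shows ${\rm Spec}_{\mathcal{A}}(K_1\vee K_{2,2,2,2})$ is $\{4,2,2,2\}$ together with $-2$ of multiplicity $5$, so $\xi_4(K_1\vee K_{2,2,2,2})=2>0$ and $G$ would have at least four positive $\mathcal{A}$-eigenvalues, a contradiction. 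Either way the join family is impossible, leaving $G\cong K_{n_1,n_2}$ with $n_1,n_2\ge 2$. The main obstacle is precisely this $l=4$ subcase: it is the unique place where $-2$ is a genuine root of the degree-$(l+1)$ factor of $\phi(G,\lambda)$, so the uniform bookkeeping that disposes of every other case fails and one needs the extra derivative computation or the interlacing comparison above.
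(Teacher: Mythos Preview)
Your proof is correct and is in fact cleaner than the paper's. Both proofs reduce to $\xi_n(G)=-2$ and invoke Lemma~\ref{least}, and both dispatch the complete multipartite case identically. The difference lies entirely in the treatment of the join family $K_{n_0}\vee K_{n_1,\dots,n_l}$.

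The paper only rules out $n_0\ge 3$ via the $(\lambda+1)^{n_0-1}$ factor and then has to carry $n_0\in\{1,2\}$ through the subcases; you observe that already $n_0=2$ produces one eigenvalue $-1$, which together with the two assumed positive eigenvalues is a third value outside $\{-2,0\}$, so $n_0=1$ is forced immediately. For the remaining $n_0=1$, $2\le l\le 4$, the paper performs a sign analysis of $f_{n_0,n_1,\dots,n_l}$ at five or more sample points to locate the roots in explicit intervals and exhibit at least three of them outside $\{-2,0\}$. You instead simply evaluate the degree-$(l+1)$ factor $h$ at $0$ and $-2$; since for $l\in\{2,3\}$ neither is a root, all $l+1\ge 3$ roots of $h$ already lie outside $\{-2,0\}$, which is immediate and avoids the intermediate-value bookkeeping entirely. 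For $l=4$ the paper interlaces with $K_1\vee K_{n_1,n_2,n_3}$, while you either compute $h'(-2)>0$ to see that $-2$ is a simple root (leaving four others outside $\{-2,0\}$) or interlace with the concrete graph $K_1\vee K_{2,2,2,2}$; all three arguments are valid, and your derivative computation is the most self-contained.

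Two small remarks: your trace argument showing $\xi_n\ne 0$ is a nice touch the paper omits; and strictly speaking Lemma~\ref{poly} gives $-1$ with multiplicity \emph{at least} $n_0-1$, but this only helps your argument.
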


\begin{proof}
If  $G \in \mathscr{S}$ has exactly two positive $\mathcal{A}$-eigenvalue different from $0$ and $-2$, then the least $\mathcal{A}$-eigenvalue of $G$ is $-2$.  By Lemma \ref{least}, we discuss the following two cases.

{\it Case 1.} $G \cong K_{n_1, n_2, \ldots, n_l}$, where $l\geq 2$ and $n_r \geq 2$ $(1 \leq r \leq l)$.  Since
${\rm Spec}_{\mathcal{A}}(K_{n_1, n_2, \ldots, n_l})=\{2(n_1 - 1),2(n_2 - 1),\ldots,2(n_l - 1),-2^{(n-l)}\}$, then $G$ has at least two positive $\mathcal{A}$-eigenvalues. Thereby,  $G \cong K_{ n_1,n_2}$ with $n_1,n_2 \geq 2$.\smallskip

{\it Case 2.} $G \cong K_{n_0}\vee K_{n_1,\ldots, n_l}$, where $n_0 \geq 1$, $l\geq 2$ and $n_1\geq \cdots \geq n_l \geq 2$ $(1 \leq r \leq l)$.
By Lemma \ref{poly}, we get
\begin{equation}\label{fnnn}
\phi(G,\lambda)= (\lambda + 1)^{n_0 - 1}(\lambda + 2)^{n-n_0-l}f_{n_0,n_1,\ldots, n_l}(\lambda),
\end{equation}
where $f_{n_0,n_1,\ldots, n_l}(\lambda)=(\lambda - n_0 + 1)\prod\limits_{r=1}^{l}(\lambda-2n_r+2)-n_0 \sum\limits_{r=1}^{l}\prod\limits_{\substack{s=1\\r\neq s}}^{l}n_r(\lambda - 2n_s + 2)]$.
If $n_0 \geq 3$, then $G$ has at least three $\mathcal{A}$-eigenvalues (i.e., one positive  $\mathcal{A}$-eigenvalue and at least two $\mathcal{A}$-eigenvalues $-1$) different from $-2$ and $0$, a contradiction. Hence, $n_0 \leq 2$. Due to Lemma \ref{least}, we have $2 \leq l \leq 4$ and thus distinguish the following cases.

{\it Subcase 2.1.}  $l = 2$.  From Lemma \ref{least}(ii) it follows that $n_0 \geq 1$. By \eqref{fnnn} we get $$f_{n_0,n_1,n_2}(\lambda)=(\lambda-n_0+1)(\lambda-2n_1+2)(\lambda-2n_2+2)-n_0n_1(\lambda-2n_2+2)-
n_0n_2(\lambda-2n_1+2).$$
By calculations we get
$f_{n_0,n_1,n_2}(-2) = -4n_1n_2 < 0$,
$f_{n_0,n_1,n_2}(-1) = (n_1+n_2-1)n_0 > 0$,
$f_{n_0,n_1,n_2}(2n_2-2)=2n_0n_2(n_1-n_2) \geq 0$,
$f_{n_0,n_1,n_2}(2n_1-2)=2n_0 n_1(n_2-n_1) \leq 0$ and
$f_{n_0,n_1,n_2}(2(n_0+n_1+n_2)-2)=2n_1^2(4n_0+3n_2)+2n_0[3n_2^2+2(n_0-1)n_0+n_2(5n_0-2)]+ 2n_1[4n_2^2+n_0(5n_0-2)+2n_2(5n_0-1)]>0$. Hence, $G$ has more than two $\mathcal{A}$-eigenvalues different from $-2$ and $0$ (i.e., $\xi_1\in[2n_1-2,2(n_0+n_1+n_2)-2), \xi_2\in [2n_2-2,2n_1-2), \xi_{n_0+2}\in(-2,-1)$), a contradiction.\smallskip

{\it Subcase 2.2.} $l = 3$.  By Lemma \ref{least}(ii) we get $n_0 = 1$ or 2. If  $n_0 = 1$, then $G=K_{1}\vee K_{n_1,n_2,n_3}$ and
\begin{equation}\nonumber
\begin{split}
f_{1,n_1,n_2,n_3}(\lambda)=&\lambda(\lambda-2n_1+2)(\lambda-2n_2+2)(\lambda-2n_3+2)
-n_0n_1(\lambda-2n_1+2)(\lambda-2n_2+2)\\
&-n_0n_2(\lambda-2n_1+2)(\lambda-2n_3+2)
-n_0n_3(\lambda-2n_2+2)(\lambda-2n_3+2).
\end{split}
\end{equation}
We get
$f_{1,n_1,n_2,n_3}(-2) = 4n_1n_2n_3> 0$,
$f_{1,n_1,n_2,n_3}(2n_3-2)= 4n_3(n_1-n_3)(n_3-n_2)\leq0$,
$f_{1,n_1,n_2,n_3}(2n_2-2)= 4n_2(n_1-n_2)(n_2-n_3)\geq0$,
$f_{1,n_1,n_2,n_3}(2n_1-2)=-4n_1(n_1-n_2)(n_1-n_3)\leq0$,
$f_{1,n_1,n_2,n_3}(2(n_1+n_2+n_3)-2)= 4(n_1^3(-1+4n_2+4n_3)+
n_1^2(n_2+n_3)(8n_2+8n_3-5)+(n_2+n_3)(4n_2(n_3-1)n_3-n_3^2+n_2^2(4n_3-1))+
n_1(4n_2^3+n_3^2 (4n_3-5)+n_2n_3(16n_3-11)+n_2^2(16n_3-5)))>0$. Therefore, $G$ has more than two $\mathcal{A}$-eigenvalues different from $-2$ and $0$ (i.e., $\xi_1\in[2n_1-2,2(n_1+n_2+n_3)-2), \xi_2\in [2n_2-2,2n_1-2), \xi_3\in [2n_3-2,2n_2-2), \xi_{n_0+3}\in(-2,2n_3-2)$), a contradiction.\smallskip

If $n_0 = 2$, then $G = K_{2}\vee K_{n_1,n_2,n_3}$.  It is obvious that $G$ and $K_{1}\vee K_{n_1,n_2,n_3}$ satisfy the conditions of Lemma \ref{E-sub}.
So, $\mathcal{A}(K_{1}\vee K_{n_1,n_2,n_3})$ is the principle submatrix of $\mathcal{A}(K_{2}\vee K_{n_1,n_2,n_3})$. From Corollary \ref{E-interlace}, by the above discussion  we get $\xi_{1} \geq 2n_1-2, \xi_{2} \geq 2n_2-2$ and $\xi_3 \geq 2n_3-2$, a contradiction.\smallskip

{\it Subcase 2.3.} $l = 4$. In view of Lemma \ref{least}(ii) we get $n_0 = 1$ and $G = K_{1}\vee K_{n_1,n_2,n_3,n_4}$.  Clearly, the graphs $G$ and $K_{1}\vee K_{n_1,n_2,n_3}$ satisfy the conditions of Lemma \ref{E-sub}. Thus,  $\mathcal{A}(K_{1}\vee K_{n_1,n_2,n_3})$ is the principle submatrix of $\mathcal{A}(K_{1}\vee K_{n_1,n_2,n_3,n_4})$. From Corollary \ref{E-interlace}, by Case 2.2 we obtain $\xi_{1} \geq 2n_1-2, \xi_{2} \geq 2n_2-2$ and $\xi_3 \geq 2n_3-2$, a contradiction.\smallskip

As proved above, we get that $G \cong K_{n_1,n_2}$ ($n_1,n_2 \geq 2$) if $G \in \mathscr{S}$ has two positive $\mathcal{A}$-eigenvalues.
\end{proof}

We next identify the graphs $G \in \mathscr{S}$ which have only one positive $\mathcal{A}$-eigenvalue, or one positive and one negative $\mathcal{A}$-eigenvalue. At this moment, $G$ has exactly one positive $\mathcal{A}$-eigenvalue, and so $G \in \mathscr{G}$ defined in Section 2.

\begin{lem}\label{one}
$G \in \mathscr{S} \cap \mathscr{G}$ if and only if $G$ is the star $S_{1,p}=S(1,-p)$ with $p \geq 1$.
\end{lem}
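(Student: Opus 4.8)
The plan is to characterize $\mathscr{S} \cap \mathscr{G}$ by combining the structural description of $\mathscr{G}$ from Theorem~\ref{E-main-1-one} (equivalently, Propositions~\ref{E-one-positive}, \ref{positive} and \ref{ES-eigenvalue}) with a direct inspection of the spectra recorded in Proposition~\ref{ES-eigenvalue}. First I would note that $G \in \mathscr{S} \cap \mathscr{G}$ means $G$ has exactly one positive $\mathcal{A}$-eigenvalue and, apart from at most two exceptional eigenvalues, all other $\mathcal{A}$-eigenvalues lie in $\{0,-2\}$. So $G$ must be one of the mixed extensions $S(t_0,-p,k_1\cdot t_1,\ldots,k_h\cdot t_h)$ listed in Proposition~\ref{ES-eigenvalue}, and the task reduces to asking for which of these the multiset of eigenvalues outside $\{0,-2\}$ has size at most two.

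Next I would go through the three spectral regimes of Proposition~\ref{ES-eigenvalue} case by case. In case (i), $p+q\le 1$ gives $G\cong K_n$ with $\mathrm{Spec}_{\mathcal{A}}(G)=\{n-1,\,-1^{(n-1)}\}$; for $n\ge 2$ the eigenvalues $n-1$ and $-1$ are both different from $0$ and $-2$ unless $n-1\in\{0,-2\}$ is impossible and $-1\ne 0,-2$, so this contributes extra eigenvalues: $K_n$ lies in $\mathscr{S}$ only when the number of eigenvalues not in $\{0,-2\}$ is at most $2$, i.e. when the eigenvalue $-1$ has multiplicity at most $1$ together with the eigenvalue $n-1$; this forces $n-1\le 2$, i.e.\ $n\le 3$, but one checks $K_2=S(1,-1)$ and $K_3$; since $K_2 = S_{1,1}$ fits the claimed form with $p=1$, while $K_3$ must be handled (and in fact $K_3 = S(1,2)\notin$ the claimed family, so it must be excluded here — I would verify $\mathrm{Spec}_{\mathcal{A}}(K_3)=\{2,-1,-1\}$ has three eigenvalues off $\{0,-2\}$, hence $K_3\notin\mathscr{S}$). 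In cases (ii) and (iii), the spectrum contains the block $(\lambda+1)^{t_0-1}(\lambda+2)^{p-1}$ times the factor $g$, plus a $0$-eigenvalue of high multiplicity, plus the roots $\xi_1>0$, a possible $\xi_3\in(-1,0)$, and the interlacing roots $\xi_{5+2i}$ (or $\xi_{6+2i}$) together with the values $-2t_i$ with multiplicities $k_i-1$. The eigenvalues lying outside $\{0,-2\}$ are therefore: $\xi_1$; the $t_0-1$ copies of $-1$; possibly $\xi_3$; each $\xi_{5+2i}\in(-2t_{h-i},-2t_{h-i+1})$ which is $<-2$ and hence $\ne -2$; and each $-2t_i$ ($t_i\ge 2$) with multiplicity $k_i-1$. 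For all of these to total at most two, I would argue: $t_0-1\le 1$ forces $t_0\le 2$; the presence of any $t_i\ge 2$ with $k_i\ge 2$, or any interlacing root $\xi_{5+2i}$, or $h\ge 2$, already produces too many; so we are forced to $q\le \cdots$ and in fact $q=0$ (no $t_j$-blocks at all). Combined with the constraint $p+q\ge 2$ that governs cases (ii),(iii) (so $p\ge 2$), and $t_0\le 2$, the surviving possibilities are exactly $S(t_0,-p)$ with $q=0$; then I would read off from \eqref{g1}--\eqref{g} (with the $t_j$-product empty) the explicit spectrum of $S(t_0,-p)$ and check that $t_0=1$ gives $\mathrm{Spec}_{\mathcal{A}}=\{\xi_1,0^{(\ast)},-2^{(p-1)}\}$ with only $\xi_1\notin\{0,-2\}$ (so $S(1,-p)\in\mathscr{S}$ for all $p\ge 1$), while $t_0=2$ gives an additional eigenvalue $-1$ and an extra root, hence too many, excluding $S(2,-p)$. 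Handling the small degenerate cases $p=1$ (where $S(1,-1)=K_2$) uniformly should be done by direct computation.

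The main obstacle, I expect, is the bookkeeping of how many eigenvalues fall outside $\{0,-2\}$ in the two general spectrum formulas of Proposition~\ref{ES-eigenvalue}(ii)--(iii): one must carefully confirm that every $\xi_{5+2i}$ (resp.\ $\xi_{6+2i}$) is genuinely $\ne -2$ because it lies strictly below $-2$, that each $-2t_i$ with $t_i\ge 2$ is $\ne -2$, and that $\xi_3\in(-1,0)$ (when present) is $\ne 0,-2$ — so that the count of ``bad'' eigenvalues is exactly $1 + (t_0-1) + [\xi_3\text{ present}] + \sum_i\big(1 + (k_i-1)\big)$ in the generic subcase, and one extra reduction in the special subcases where $\xi_2=0$ instead of $\xi_2\in(-1,0)$. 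Forcing this to be $\le 2$ then pins down $t_0\in\{1,2\}$, $h=0$ (equivalently $q=0$), and finally $t_0=1$; the reverse direction is the trivial observation that $S(1,-p)=K_{1,p}$ has $\mathcal{A}(G)=\mathcal{A}(K_{1,p})$ with spectrum $\{\xi_1,0^{(\ast)},(-2)^{(p-1)}\}$, which I would verify from \eqref{g1}. This yields $G\in\mathscr{S}\cap\mathscr{G}$ iff $G=S(1,-p)=S_{1,p}$ with $p\ge 1$.
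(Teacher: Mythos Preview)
Your approach is essentially the same as the paper's: both start from the structural classification of $\mathscr{G}$ (Theorem~\ref{E-main-1-one}/Proposition~\ref{ES-eigenvalue}) and then eliminate all parameter choices for which more than two $\mathcal{A}$-eigenvalues fall outside $\{0,-2\}$. The paper carries this out as an exhaustive case split over $(t_0,p,q)$ with many subcases, while you package the same elimination into a single counting formula $1+(t_0-1)+[\xi_3\text{ present}]+\sum_i(1+(k_i-1))$; the underlying logic and the tools invoked are identical.

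One caveat worth flagging: your asserted spectrum $\{\xi_1,0^{(\ast)},(-2)^{(p-1)}\}$ for $S(1,-p)$, with ``only $\xi_1\notin\{0,-2\}$'', is not quite right. A direct computation (which the paper in fact performs inside its proof of this lemma, rather than quoting Proposition~\ref{ES-eigenvalue}) gives
\[
{\rm Spec}_{\mathcal{A}}(S(1,-p))=\bigl\{(p-1)+\sqrt{p^2-p+1},\,(p-1)-\sqrt{p^2-p+1},\,(-2)^{(p-1)}\bigr\},
\]
so for $p\ge 2$ the second eigenvalue lies strictly in $(-1,0)$ and there are exactly \emph{two} eigenvalues off $\{0,-2\}$, not one. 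This does not damage your conclusion (two is still at most two, so $S(1,-p)\in\mathscr{S}$), but it means you should not rely on the ``$t_0=1,\,q=0$ is a special subcase with $\lambda_2=0$'' clause of Proposition~\ref{ES-eigenvalue}(ii) at face value. When you carry out the promised verification from \eqref{g1} you will see the correct picture, and your exclusion of $S(2,-p)$ (which then genuinely has three eigenvalues off $\{0,-2\}$: $\xi_1$, $\xi_3\in(-1,0)$, and $-1$) goes through as stated.
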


\begin{proof}
For $G \in \mathscr{G}$, by Theorem \ref{E-main-1-one} we get $G \cong S(t_0, -p, t_1, \ldots, t_q)$ with  those restricted conditions.  Recall, $t_1\geq t_2\geq \cdots \geq t_q \geq 2$. If $t_0 \geq 3$, then by \eqref{g} we deduce that $G$ has at least two $\mathcal{A}$-eigenvalues $-1$, a contradiction.   Hence, $t_0 \leq 2$.\smallskip

{\it Case 1.} $t_0 = 1$. By Theorem \ref{E-main-1-one}(i) we get $p+q \geq 1$ with $p,q \geq 0$. \smallskip

{\it Subcase 1.1.} $p = 0$. Then $q\geq 1$, and thus $G \cong S(1,t_1,t_2,\ldots,t_q)$ with $t_j \geq 2$ ($1 \leq j \leq q$). If $q = 1$, then $G \cong K_n$. $G \in \mathscr{S}$ for $n=2$(i.e., $K_2= S(1,-1)$),  but $G\notin \mathscr{S}$ for others. If $q \geq 2$, set $g_{t_0,-p,\cdots,t_q}(\lambda)$ be defined in \eqref{g}.\smallskip

If $q = 2$, then $G \cong S(1,t_1,t_2)$. From Proposition \ref{ES-eigenvalue}(iii) it follows that $G$ has three $\mathcal{A}$-eigenvalues different from $-2$ and $0$ (i.e., $\xi_1 >0, \xi_{n-1}\in (-1,0), \xi_{n} \in [-2t_1,-2t_2)$), a contradiction.\smallskip

If $q = 3$, then $G \cong S(1,t_1,t_2,t_3)$. By Proposition \ref{ES-eigenvalue}(iii) we get that $G$ has four $\mathcal{A}$-eigenvalues  different from $-2$ and $0$ (i.e., $\xi_1 >0, \xi_{n-2}\in (-1,0), \xi_{n-1}\in [-2t_2,-2t_3), \xi_{n} \in [-2t_1,-2t_2)$), a contradiction.\smallskip

If $q \geq 4$, then $G \cong S(1,t_1,\cdots,t_q)$. Clearly, $G$ and $S(1,t_1,t_2,t_3)$ share the conditions of Lemma \ref{E-sub}.
So, $\mathcal{A}(S(1,t_1,t_2,t_3))$ is the principal submatrix of $\mathcal{A}(S(1,t_1,\cdots,t_q))$. By Corollary \ref{E-interlace} and the above conclusion, we get $\xi_1>0,\xi_{n-1} \leq -2t_3$ and $\xi_{n} \leq -2t_2$, a contradiction.\smallskip

{\it Subcase 1.2.} $p = 1$. Then $q \geq 0$. If $q =0$, then $G = S(1,-1) \in \mathscr{S} \cap \mathscr{G}$.\smallskip

If $q =1$, then $G \cong S(1,-1,t_1)$ and $G$ has three $\mathcal{A}$-different from $-2$ and $0$ (i.e., $\xi_1>0, \xi_{n-1}\in(-1,0), \xi_n\in(-2t_1,-2)$) by Proposition \ref{ES-eigenvalue}(ii), a contradiction.\smallskip

If $q = 2$, then $G \cong S(1,-1,t_1,t_2)$. From Proposition \ref{ES-eigenvalue}(ii) it follows that $G$ has four $\mathcal{A}$-eigenvalues different from $-2$ and $0$ (i.e., $\xi_1>0, \xi_{n-2}\in(-1,0), \xi_{n-1}\in[-2t_2,-2), \xi_n\in[-2t_1,-2t_2)$),  a contradiction.\smallskip

If $q \geq 3$, then $G \cong S(1,-1,t_1,\cdots,t_q)$. Obviously, $G$ and $S(1,-1,t_1,t_2)$ possess the conditions of Lemma \ref{E-sub}.
Hence, $\mathcal{A}(S(1,-1,t_1,t_2))$ is the principal submatrix of $\mathcal{A}(S(1,-1,t_1,\cdots,t_q))$. By Corollary \ref{cauchy} and the above case, we get $\xi_1>0, \xi_{n-1} <-2$ and $\xi_{n} \leq -2t_2$, a contradiction.\smallskip

{\it Subcase 1.3.} $p \geq 2$. Then $q \geq 0$. If $q = 0$, then $G\cong S(1,-p)$ and $${\rm Spec}_{_{\mathcal{A}}}(S(1,-p)) = \{\sqrt{p^2-p+1}+p-1, 0^{(n-t_0-p-q)},-\sqrt{p^2-p+1}+p-1,-2^{(p-1)}\}.$$ Thus, the star $S(1,-p) \in \mathscr{S} \cap \mathscr{G}$.\smallskip

If $q = 1$, then $G \cong S(1,-p,t_1)$. By Proposition \ref{ES-eigenvalue} we get $G$ has three $\mathcal{A}$-eigenvalues different from $-2$ and $0$ (i.e. $\xi_1>0, \xi_{n-p}\in(-1,0), \xi_n\in(-2t_1,-2)$),  a contradiction.\smallskip

If $q \geq 2$, then $G \cong S(1,-p,t_1,\cdots,t_q)$. Similarly, $\mathcal{A}(S(1,-1,t_1,t_2))$ is the principal submatrix of $\mathcal{A}(S(1,-p,t_1,\cdots,t_q))$. Analogously to the last case of Subcase 1.2 we get a conflict.\smallskip

{\it Case 2.}  $t_0 = 2$. By Theorem \ref{E-main-1-one} we get $p,q \geq 0$.

{\it Subcase 2.1.} $p = 0$. Then $q \geq 1$ and $G \cong S(2,t_1,t_2,\ldots,t_q)$. If $q = 1$, then $G \cong K_n \notin \mathscr{S}$. If $q = 2$, then $G \cong S(2,t_1,t_2)$ and $G$ has three $\mathcal{A}$-eigenvalues different from $-2$ and $0$ (i.e., $\xi_1 >0, \xi_{n-1}\in (-1,0), \xi_{n} \in [-2t_1,-2)$) by Proposition \ref{ES-eigenvalue}(iii), a contradiction. \smallskip

If $q \geq 3$, then $G \cong S(2,t_1,\cdots,t_q)$.  Similarly, $\mathcal{A}(S(1,t_1,t_2,t_3))$ is the principal submatrix of $\mathcal{A}(S(2,t_1,\cdots,t_q))$. Analogously to Subcase 1.1 (when $q=3$), we get a contradiction. \smallskip

{\it Subcase 2.2.} $p = 1$. So $q\geq 0$. If $q = 0$, then $G \cong S(2,-1) \cong C_3 \not\in \mathscr{S}$. If $q = 1$, then $G \cong S(2,-1,t_1)$. By Proposition \ref{ES-eigenvalue}(ii) we get that $G$ has three $\mathcal{A}$-eigenvalues different from $-2$ and $0$ (i.e., $\xi_1>0, \xi_{n-1} \in (-1,0), \xi_n\in(-2t_1,-2)$), a contradiction.  If $q \geq 2$, then $G \cong S(2,-1,t_1,\cdots,t_q)$. Similarly,  $\mathcal{A}(S(1, -1,t_1,t_2))$ is the principal submatrix of $\mathcal{A}(S(2,-1,t_1,\cdots,t_q))$. Analogously to Subcase 1.2 (when $q=2$), we get $G \notin \mathscr{S}$.\smallskip

{\it Subcase 2.3.} $p \geq 2$. If $q = 0$, then $G \cong S(2, -p)$ and we can get $\mathcal{A}$-eigenvalues of $G$ from Proposition \ref{ES-eigenvalue}(ii). Then $G$ has three $\mathcal{A}$-eigenvalues different from $-2$ and $0$ (i.e., $\xi_1>0,\xi_{n-p-1} \in (-1,0),\xi_{n-p}= -1$), a contradiction.\smallskip

If $q = 1$, then $G \cong S(2,-p,t_1)$. Hence, $G$ has three $\mathcal{A}$-eigenvalues different from $-2$ and $0$ (i.e., $\xi_1>0, \xi_{n-p-1} \in (-1,0), \xi_n \in(-2t_1,-2)$) by Proposition \ref{ES-eigenvalue}(ii), a contradiction.\smallskip

If $q \geq 2$, then $G \cong S(2,-p,t_1,\cdots,t_q)$. Similarly, $\mathcal{A}(S(1,-1,t_1,t_2))$ is the principal submatrix of $\mathcal{A}(S(2,-p,t_1,\cdots,t_q))$. In a similar way, $G \not\in \mathscr{S}$.\smallskip

As discussed above,  $G \in \mathscr{S} \cap \mathscr{G}$ if and only if $G \cong S(1,-p)$ with $p \geq 2$.
\end{proof}

The following corollary follows from the proof of Lemma \ref{one}.

\begin{cor}
No graph in $\mathscr{S}$ has only one positive $\mathcal{A}$-eigenvalue different from $-2$ and $0$.
\end{cor}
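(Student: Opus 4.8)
The plan is to show that if a graph $G\in\mathscr{S}$ has only one $\mathcal{A}$-eigenvalue different from $-2$ and $0$, then that eigenvalue is forced to be the unique positive one, so $G\in\mathscr{S}\cap\mathscr{G}$; Lemma~\ref{one} then determines $G$ completely, and a direct inspection of its $\mathcal{A}$-spectrum exhibits a \emph{second} eigenvalue outside $\{-2,0\}$, contradicting the hypothesis.

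First I would record the elementary observation that every connected graph on $n\ge 2$ vertices has a positive $\mathcal{A}$-eigenvalue (as noted just before Lemma~\ref{two}), whereas the values $-2$ and $0$ are non-positive. Hence, if $G\in\mathscr{S}$ has only one $\mathcal{A}$-eigenvalue different from $-2$ and $0$, that eigenvalue must be exactly the (unique) positive one, and therefore $G$ has exactly one positive $\mathcal{A}$-eigenvalue, i.e. $G\in\mathscr{S}\cap\mathscr{G}$.

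Next I apply Lemma~\ref{one}, which gives $G\cong S(1,-p)$ for some $p\ge 1$. The proof of that lemma already records
\[
{\rm Spec}_{\mathcal{A}}(S(1,-p))=\{\sqrt{p^2-p+1}+p-1,\; 0^{(n-t_0-p-q)},\; -\sqrt{p^2-p+1}+p-1,\; -2^{(p-1)}\},
\]
and here $t_0=1$, $q=0$ force $n=p+1$, so there are no $0$-entries and the value $\mu:=-\sqrt{p^2-p+1}+p-1$ genuinely occurs. Since $p^2-p+1-(p-1)^2=p>0$ for $p\ge 1$, we get $\sqrt{p^2-p+1}>p-1$, hence $\mu<0$; moreover $\mu=0$ would force $\sqrt{p^2-p+1}=p-1$, which is impossible, and $\mu=-2$ would force $p^2-p+1=(p+1)^2$, i.e. $p=0$, also impossible. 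Thus $S(1,-p)$ carries the two $\mathcal{A}$-eigenvalues $\sqrt{p^2-p+1}+p-1>0$ and $\mu<0$, both different from $-2$ and $0$.

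This contradicts the assumption that $G$ has only one $\mathcal{A}$-eigenvalue different from $-2$ and $0$, which proves the corollary. The argument is essentially immediate once Lemma~\ref{one} is in hand; the only point requiring a moment's care is checking that the second eigenvalue $\mu$ of $S(1,-p)$ is genuinely distinct from both $-2$ and $0$ for every admissible $p$ (in particular for $p=1$, where $S(1,-1)\cong K_2$ has spectrum $\{1,-1\}$), which the displayed inequalities settle at once.
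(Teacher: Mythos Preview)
Your proof is correct and follows essentially the same route as the paper: the paper simply notes that the corollary ``follows from the proof of Lemma~\ref{one},'' and your argument makes this explicit by invoking Lemma~\ref{one} to force $G\cong S(1,-p)$ and then verifying from the displayed spectrum that $-\sqrt{p^2-p+1}+p-1\notin\{-2,0\}$ for every $p\ge 1$.
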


Clearly, the star $S_{1,p} = S(1,-p)$ is a special kind of complete bipartite graphs $K_{n_1,n_2} \cong S(-n_1,-n_2)$.\medskip

\noindent{\bf Proof of Theorem \ref{E-main-2-two}.}
This theorem follows from  Lemma \ref{two} and Corollary \ref{one}.

\section{\large HL-index of graphs with exactly one $\mathcal{A}$-eigenvalue}

In this section, we give a proof for Theorem \ref{E-HL-index}.\medskip

\noindent{\bf Proof of Theorem \ref{E-HL-index}}.
Clearly, the graph $G$ satisfies the conditions (i)-(v) in Proposition \ref{positive}. Therefore, $\xi_{\lceil\frac{n+1}{2}\rceil}  \leq \xi_{\lfloor\frac{n+1}{2}\rfloor} \leq 0$ and $R_{\mathcal{A}}(G) = \max\{|\xi_{\lfloor\frac{n+1}{2}\rfloor}|, |\xi_{\lceil\frac{n+1}{2}\rceil}|\} = |\xi_{\lceil\frac{n+1}{2}\rceil}|$.\smallskip

From Proposition \ref{ES-eigenvalue}(i), for $p+q \leq 1$ we get $R_{\mathcal{A}}(G) =1$. For $p+q \geq 2$, by Proposition \ref{ES-eigenvalue}(ii) and (iii) we consider the following cases.\smallskip

{\it Case 1.} $n=t_0+p+q$. Due to $n\geq 2$, then  $\lceil\frac{n+1}{2}\rceil \geq 2 =1+(n-t_0-p-q)+1$, and thus $p+q \geq \lceil \frac{n-2t_0+2}{2} \rceil$. If $p+q = \lceil \frac{n-2t_0+2}{2} \rceil$, then $\lceil\frac{n+1}{2}\rceil = 1+(n-t_0-p-q)+1$. For $t_0=1, q=0$ or $t_0=3,q=4-p$ or $t_0=4,q =3-p$, $R_{\mathcal{A}}(G)= |\xi_2|= 0$; otherwise, for the other cases in conditions (i)-(v) of Proposition \ref{positive}, we get $R_{\mathcal{A}}(G) = |\xi_2| \in (0,1)$.\smallskip

If $\lceil \frac{n-2t_0+4}{2} \rceil \leq p+q \leq \lceil \frac{n}{2} \rceil$, then $t_0 \geq 2$ and $1+(n-t_0-p-q)+1+1 \leq \lceil\frac{n+1}{2}\rceil \leq 1+(n-t_0-p-q)+1+(t_0-1)$. So, $R_{\mathcal{A}}(G)= |\xi_3|= 1$ by Proposition \ref{ES-eigenvalue}(ii) and (iii).\smallskip

If $p+q \geq \lceil \frac{n+2}{2} \rceil$, we get that $\lceil\frac{n+1}{2}\rceil \geq 1+(n-t_0-p-q)+1+(t_0-1)+1$.\smallskip

{\it Case 1.1.} If $t_0 =1$, $q=0$. Then $p\geq 2$ and $q =0 \leq \lceil \frac{n-2}{2} \rceil$. Hence, $1+(n-t_0-p-q)+1+(t_0-1)+1 \leq \lceil\frac{n+1}{2}\rceil \leq 1+(n-t_0-p-q)+1+(t_0-1)+(p-1)$ and $R_{\mathcal{A}}(G) = |\xi_3| = 2$ by Proposition \ref{ES-eigenvalue}(ii). If $t_0=3,q=4-p$, then $p+q =4 \geq \lceil \frac{n+2}{2} \rceil$ which leads to $n\leq 6 < t_0+p+q=7$, a contradiction.
For $t_0=4,q =3-p$, then $p+q =3 \geq \lceil \frac{n+2}{2} \rceil$ which results in $n\leq 4 < t_0+p+q=7$, a contradiction.

{\it Case 1.2.}
For the other cases in conditions (i)-(v) of Proposition \ref{positive}, if we assume  $t_0 \geq 5$, then $p+q =2\geq \lceil \frac{n+2}{2} \rceil$ implying $n\leq 2 < t_0+p+q$; if $t_0 =4$,
$p+q <3 \geq \lceil \frac{n+2}{2} \rceil$ showing $n< 4 < t_0+p+q$; if $t_0 =3$,
$p+q <4 \geq \lceil \frac{n+2}{2} \rceil$ indicating $n< 6 < t_0+p+q$. For any previous case, we always obtain a contradiction. Hence, $t_0 \leq 2$.

{\it Case 1.2.1.}
$t_0 =1, p\geq 2$. From Proposition \ref{ES-eigenvalue}(ii), we distinguish the following cases.
If $q \leq \lceil \frac{n-2}{2} \rceil$, then $R_{\mathcal{A}}(G) = |\xi_3| = 2$.
If $q = \left\lceil \frac{n}{2} \right\rceil$, we get $\lceil\frac{n+1}{2}\rceil = 1+(n-t_0-p-q)+1+(t_0-1)+(p-1)+1$, and hence $R_{\mathcal{A}}(G) = |\xi_4| \in (2, 2t_h)$.
If $\lceil \frac{n+2}{2} \rceil \leq q \leq \lceil \frac{n+2k_h-2}{2}\rceil$, then $R_{\mathcal{A}}(G) = |\xi_5| = 2t_h$. If $q= \lceil \frac{n+\sum\limits_{a=0}^{i}2k_{h-a}}{2} \rceil$, then
$1+(n-t_0-p-q)+1+(t_0-1)+(p-1)+1+1 \leq \lceil\frac{n+1}{2}\rceil \leq 1+(n-t_0-p-q)+1+(t_0-1)+(p-1)+1+(k_h-1)$ and therefore $R_{\mathcal{A}}(G) = |\xi_{6+2i}| \in(2t_{h-i}, 2t_{h-i-1})$ $(0\leq i \leq h-1)$.
If $\lceil \frac{n+\sum\limits_{a=0}^{i}2k_{h-a}+2}{2} \rceil \leq q \leq \lceil \frac{n+\sum\limits_{a=0}^{i+1}2k_{h-a}-2}{2} \rceil$, then $\lceil\frac{n+1}{2}\rceil = 1+(n-t_0-p-q)+1+(t_0-1)+(p-1)+1+\sum\limits_{a=0}^i(k_{h-a}-1)+i+1$, and thus
$R_{\mathcal{A}}(G)= |\xi_{7+2i}| = 2t_{h-i-1}$ $(0\leq i \leq h-2)$.\smallskip

{\it Case 1.2.2.}
$t_0 =1, p = 1$. Then $q \geq \lceil \frac{n}{2} \rceil \geq 1$ and we discuss the following cases from Proposition \ref{ES-eigenvalue}(ii).
If $q = \lceil \frac{n}{2} \rceil$, then $R_{\mathcal{A}}(G) = |\xi_3| \in (2, 2t_h)$.
If $\lceil \frac{n+2}{2} \rceil \leq q \leq \lceil \frac{n+2k_h-2}{2}\rceil$,
$R_{\mathcal{A}}(G) = |\xi_4| = 2t_h$.
If $q= \lceil \frac{n+\sum\limits_{a=0}^{i}2k_{h-a}}{2} \rceil$,
$R_{\mathcal{A}}(G) = |\xi_{5+2i}| \in(2t_{h-i}, 2t_{h-i-1})$ $(0\leq i \leq h-1)$.
If $\lceil \frac{n+\sum\limits_{a=0}^{i}2k_{h-a}+2}{2} \rceil \leq q \leq \lceil \frac{n+\sum\limits_{a=0}^{i+1}2k_{h-a}-2}{2} \rceil$, then
$R_{\mathcal{A}}(G)= |\xi_{6+2i}| = 2t_{h-i-1}$ $(0\leq i \leq h-2)$.\smallskip

{\it Case 1.2.3.}
$t_0 =1, p = 0$. Then $q \geq \lceil \frac{n+2}{2} \rceil \geq 2$. By Proposition \ref{ES-eigenvalue}(iii) we have the following cases.
If $\lceil \frac{n+2}{2} \rceil \leq q \leq \lceil \frac{n+2k_h-2}{2}\rceil$, then $R_{\mathcal{A}}(G) = |\xi_3| = 2t_h$.
If $q= \lceil \frac{n+\sum\limits_{a=0}^{i}2k_{h-a}}{2} \rceil$, then
$R_{\mathcal{A}}(G) = |\xi_{4+2i}| \in(2t_{h-i}, 2t_{h-i-1})$ $(0\leq i \leq h-1)$.
If $\lceil \frac{n+\sum\limits_{a=0}^{i}2k_{h-a}+2}{2} \rceil \leq q \leq \lceil \frac{n+\sum\limits_{a=0}^{i+1}2k_{h-a}-2}{2} \rceil$, then
$R_{\mathcal{A}}(G)= |\xi_{5+2i}| = 2t_{h-i-1}$ $(0\leq i \leq h-2)$.\smallskip

{\it Case 1.2.4.}
$t_0=2, p\geq 2$. Similarly to {\it Case 1.2.1}, we get the following cases.
If $q \leq \lceil \frac{n-2}{2} \rceil$, then $R_{\mathcal{A}}(G) = |\xi_4| = 2$.
If $q = \lceil \frac{n}{2} \rceil$, then $R_{\mathcal{A}}(G) = |\xi_5| \in (2, 2t_h)$.
If $\lceil \frac{n+2}{2} \rceil \leq q \leq \lceil \frac{n+2k_h-2}{2}\rceil$, then $R_{\mathcal{A}}(G) = |\xi_6| = 2t_h$.
If $q= \lceil \frac{n+\sum\limits_{a=0}^{i}2k_{h-a}}{2} \rceil$, then $R_{\mathcal{A}}(G) = |\xi_{7+2i}| \in(2t_{h-i}, 2t_{h-i-1})$ $(0\leq i \leq h-1)$.
If $\lceil \frac{n+\sum\limits_{a=0}^{i}2k_{h-a}+2}{2} \rceil \leq q \leq \lceil \frac{n+\sum\limits_{a=0}^{i+1}2k_{h-a}-2}{2} \rceil$, then $R_{\mathcal{A}}(G)= |\xi_{8+2i}| = 2t_{h-i-1}$ $(0\leq i \leq h-2)$.\smallskip

{\it Case 1.2.5.}
$t_0=2, p = 1$. Then $q \geq \lceil \frac{n}{2} \rceil$. Similarly to {\it Case 1.2.2}.
if $q = \lceil \frac{n}{2} \rceil$, then $R_{\mathcal{A}}(G) = |\xi_4| \in (2, 2t_h)$.
If $\lceil \frac{n+2}{2} \rceil \leq q \leq \lceil \frac{n+2k_h-2}{2}\rceil$, then $R_{\mathcal{A}}(G) = |\xi_5| = 2t_h$.
If $q= \lceil \frac{n+\sum\limits_{a=0}^{i}2k_{h-a}}{2} \rceil$, then $R_{\mathcal{A}}(G) = |\xi_{6+2i}| \in (2t_{h-i}, 2t_{h-i-1})$ $(0\leq i \leq h-1)$.
If $\lceil \frac{n+\sum\limits_{a=0}^{i}2k_{h-a}+2}{2} \rceil \leq q \leq \lceil \frac{n+\sum\limits_{a=0}^{i+1}2k_{h-a}-2}{2} \rceil$, then $R_{\mathcal{A}}(G)= |\xi_{7+2i}| = 2t_{h-i-1}$ $(0\leq i \leq h-2)$.\smallskip

{\it Case 1.2.6.}
$t_0=2, p = 0$. Then $q \geq \lceil \frac{n+2}{2} \rceil$. Similarly to {\it Case 1.2.3}, if $\lceil \frac{n+2}{2} \rceil \leq q \leq \lceil \frac{n+2k_h-2}{2}\rceil$, then $R_{\mathcal{A}}(G) = |\xi_4| = 2t_h$.
If $q= \lceil \frac{n+\sum\limits_{a=0}^{i}2k_{h-a}}{2} \rceil$, then $R_{\mathcal{A}}(G) = |\xi_{5+2i}| \in(2t_{h-i}, 2t_{h-i-1})$ $(0\leq i \leq h-1)$.
If $\lceil \frac{n+\sum\limits_{a=0}^{i}2k_{h-a}+2}{2} \rceil \leq q \leq \lceil \frac{n+\sum\limits_{a=0}^{i+1}2k_{h-a}-2}{2} \rceil$, then $R_{\mathcal{A}}(G)= |\xi_{6+2i}| = 2t_{h-i-1}$ $(0\leq i \leq h-2)$.\smallskip

{\it Case 2.} $n>t_0+p+q$. If $p+q \leq \lceil \frac{n-2t_0}{2} \rceil$, we get $\lceil\frac{n+1}{2}\rceil \leq 1+(n-t_0-p-q)$ and $R_{\mathcal{A}}(G)= |\xi_2|= 0$ by Proposition \ref{ES-eigenvalue}(ii) and (iii). For $p+q \geq \lceil \frac{n-2t_0+2}{2} \rceil$, we consider the the following cases that are similar to {\it Case 1}. If $p+q = \lceil \frac{n-2t_0+2}{2} \rceil$.
Then for $t_0=1, q=0$ or $t_0=3,q=4-p$ or $t_0=4,q =3-p$, $R_{\mathcal{A}}(G)= |\xi_2|= 0$; otherwise, for the other cases in conditions (i)-(v) of Proposition \ref{positive}, we get $R_{\mathcal{A}}(G) = |\xi_3| \in (0,1)$.

If $\lceil \frac{n-2t_0+4}{2} \rceil \leq p+q \leq \lceil \frac{n}{2} \rceil$, then $t_0 \geq 2$.
For $t_0=3,q=4-p$ or $t_0=4,q =3-p$, we get $R_{\mathcal{A}}(G)= |\xi_3|= 1$.
For the other cases in conditions (ii)-(v) of Proposition \ref{positive}, we have $R_{\mathcal{A}}(G)= |\xi_4|= 1$. If $p+q \geq \lceil \frac{n+2}{2} \rceil$,  we get  $\lceil\frac{n+1}{2}\rceil \geq 1+(n-t_0-p-q)+1+(t_0-1)+1$ and distinguish the following cases.

{\it Case 2.1.} If $t_0 =1$, $q=0$, then $p\geq 2$ and $q =0 \leq \lceil \frac{n-2}{2} \rceil$ which shows  $R_{\mathcal{A}}(G) = |\xi_3| = 2$ by Proposition \ref{ES-eigenvalue}(ii).
For $t_0=3,q=4-p$, then $p+q =4 \geq \lceil \frac{n+2}{2} \rceil$, and thus $n\leq 6 < t_0+p+q$, a contradiction.
For $t_0=4,q =3-p$, then $p+q =3 \geq \lceil \frac{n+2}{2} \rceil$, and so $n\leq 4 < t_0+p+q$, a contradiction.

{\it Case 2.2.} For the other cases in conditions (i)-(v) of Proposition \ref{positive}, if $t_0 \geq 5$, then $p+q =2\geq \lceil \frac{n+2}{2} \rceil$ and thus $n\leq 2 < t_0+p+q$; if $t_0 =4$, then
$\lceil \frac{n+2}{2} \rceil \leq p+q < 3$ and so $n<4 < t_0+p+q$; if $t_0 =3$, then
$\lceil \frac{n+2}{2} \rceil \leq p+q < 4$ and thus $n<6 < t_0+p+q$. In any case, we always have a contradiction. Hence, $t_0 \leq 2$.

{\it Case 2.2.1.} $t_0 =1, p\geq 2$. In this and next cases, we take use of the $\mathcal{A}$-spectrum of Proposition \ref{ES-eigenvalue}(ii).
If $q \leq \lceil \frac{n-2}{2} \rceil$, then $R_{\mathcal{A}}(G) = |\xi_4| = 2$.
If $q = \lceil \frac{n}{2} \rceil$, then $R_{\mathcal{A}}(G) = |\xi_5| \in(2, 2t_h)$.
If $\lceil \frac{n+2}{2} \rceil \leq q \leq \lceil \frac{n+2k_h-2}{2}\rceil$, then $R_{\mathcal{A}}(G) = |\xi_6| = 2t_h$.
If $q= \lceil \frac{n+\sum\limits_{a=0}^{i}2k_{h-a}}{2} \rceil$, then $R_{\mathcal{A}}(G) = |\xi_{7+2i}| \in(2t_{h-i}, 2t_{h-i-1})$ $(0\leq i \leq h-1)$.
If $\lceil \frac{n+\sum\limits_{a=0}^{i}2k_{h-a}+2}{2} \rceil \leq q \leq \lceil \frac{n+\sum\limits_{a=0}^{i+1}2k_{h-a}-2}{2} \rceil$, then $R_{\mathcal{A}}(G)= |\xi_{8+2i}| = 2t_{h-i-1}$ $(0\leq i \leq h-2)$.

{\it Case 2.2.2.}
$t_0 =1, p = 1$. Then $q \geq \lceil \frac{n}{2} \rceil$.
If $q = \lceil \frac{n}{2} \rceil$, then $R_{\mathcal{A}}(G) = |\xi_4| \in (2, 2t_h)$.
If $\lceil \frac{n+2}{2} \rceil \leq q \leq \lceil \frac{n+2k_h-2}{2}\rceil$, then $R_{\mathcal{A}}(G) = |\xi_5| = 2t_h$.
If $q= \lceil \frac{n+\sum\limits_{a=0}^{i}2k_{h-a}}{2} \rceil$, then $R_{\mathcal{A}}(G) = |\xi_{6+2i}| \in (2t_{h-i}, 2t_{h-i-1})$ $(0\leq i \leq h-1)$.
If $\lceil \frac{n+\sum\limits_{a=0}^{i}2k_{h-a}+2}{2} \rceil \leq q \leq \lceil \frac{n+\sum\limits_{a=0}^{i+1}2k_{h-a}-2}{2} \rceil$, then $R_{\mathcal{A}}(G)= |\xi_{7+2i}| = 2t_{h-i-1}$ $(0\leq i \leq h-2)$.

{\it Case 2.2.3.}
$t_0 =1, p = 0$. In this  case, we employ the $\mathcal{A}$-spectrum of Proposition \ref{ES-eigenvalue} (ii). Then $q \geq \lceil \frac{n+2}{2} \rceil$.
If $\lceil \frac{n+2}{2} \rceil \leq q \leq \lceil \frac{n+2k_h-2}{2}\rceil$, then $R_{\mathcal{A}}(G) = |\xi_4| = 2t_h$.
If $q= \lceil \frac{n+\sum\limits_{a=0}^{i}2k_{h-a}}{2} \rceil$, then $R_{\mathcal{A}}(G) = |\xi_{5+2i}| \in (2t_{h-i}, 2t_{h-i-1})$ $(0\leq i \leq h-1)$.
If $\lceil \frac{n+\sum\limits_{a=0}^{i}2k_{h-a}+2}{2} \rceil \leq q \leq \lceil \frac{n+\sum\limits_{a=0}^{i+1}2k_{h-a}-2}{2} \rceil$, then $R_{\mathcal{A}}(G)= |\xi_{6+2i}| = 2t_{h-i-1}$ $(0\leq i \leq h-2)$.

{\it Case 2.2.4.}
$t_0 =2, p\geq 2$. Similarly, if $q \leq \lceil \frac{n-2}{2} \rceil$, then $R_{\mathcal{A}}(G) = |\xi_5| = 2$.
If $q = \lceil \frac{n}{2} \rceil$, then $R_{\mathcal{A}}(G) = |\xi_6| \in (2, 2t_h)$.
If $\lceil \frac{n+2}{2} \rceil \leq q \leq \lceil \frac{n+2k_h-2}{2}\rceil$, then $R_{\mathcal{A}}(G) = |\xi_7| = 2t_h$.
If $q= \lceil \frac{n+\sum\limits_{a=0}^{i}2k_{h-a}}{2} \rceil$, then $R_{\mathcal{A}}(G) = |\xi_{8+2i}| \in (2t_{h-i}, 2t_{h-i-1})$ $(0\leq i \leq h-1)$.
If $\lceil \frac{n+\sum\limits_{a=0}^{i}2k_{h-a}+2}{2} \rceil \leq q \leq \lceil \frac{n+\sum\limits_{a=0}^{i+1}2k_{h-a}-2}{2} \rceil$, then $R_{\mathcal{A}}(G)= |\xi_{9+2i}| = 2t_{h-i-1}$ $(0\leq i \leq h-2)$.

{\it Case 2.2.5.}
$t_0 =2, p = 1$. Then $q \geq \lceil \frac{n}{2} \rceil$. Similarly, if $q = \lceil \frac{n}{2} \rceil$, then $R_{\mathcal{A}}(G) = |\xi_5| \in (2, 2t_h)$.
If $\lceil \frac{n+2}{2} \rceil \leq q \leq \lceil \frac{n+2k_h-2}{2}\rceil$, then $R_{\mathcal{A}}(G) = |\xi_6| = 2t_h$.
If $q= \lceil \frac{n+\sum\limits_{a=0}^{i}2k_{h-a}}{2} \rceil$, then $R_{\mathcal{A}}(G) = |\xi_{7+2i}| \in (2t_{h-i}, 2t_{h-i-1})$ $(0\leq i \leq h-1)$.
If $\lceil \frac{n+\sum\limits_{a=0}^{i}2k_{h-a}+2}{2} \rceil \leq q \leq \lceil \frac{n+\sum\limits_{a=0}^{i+1}2k_{h-a}-2}{2} \rceil$, then $R_{\mathcal{A}}(G)= |\xi_{8+2i}| = 2t_{h-i-1}$ $(0\leq i \leq h-2)$.

{\it Case 2.2.6.}
$t_0 =2, p = 0$. Then $q \geq \lceil \frac{n+2}{2} \rceil$. Similarly,
 if $\lceil \frac{n+2}{2} \rceil \leq q \leq \lceil \frac{n+2k_h-2}{2}\rceil$, then $R_{\mathcal{A}}(G) = |\xi_5| = 2t_h$.
If $q= \lceil \frac{n+\sum\limits_{a=0}^{i}2k_{h-a}}{2} \rceil$, then $R_{\mathcal{A}}(G) = |\xi_{6+2i}| \in (2t_{h-i}, 2t_{h-i-1})$ $(0\leq i \leq h-1)$.
If $\lceil \frac{n+\sum\limits_{a=0}^{i}2k_{h-a}+2}{2} \rceil \leq q \leq \lceil \frac{n+\sum\limits_{a=0}^{i+1}2k_{h-a}-2}{2} \rceil$, then $R_{\mathcal{A}}(G)= |\xi_{7+2i}| = 2t_{h-i-1}$ $(0\leq i \leq h-2)$.

This finishes the proof.

\section{Remarks}

As Haemers pointed out \cite{Haem1}, the mixed extension of graphs is a special case of the so called {\it generalized composition} (see \cite{sch-join} for details). However, Haemers's definition is more convenient and powerful. It is quite helpful for the classifications of graphs and further for identifying which graphs are determined by the spectra. On reflection, we propose the following problems, the first one of which is a natural step.

\begin{prob}
Which graphs with exactly one positive $\mathcal{A}$-eigenvalue are determined by their $\mathcal{A}$-spectra?
\end{prob}

\begin{prob}
Determine the graphs with all but two $\mathcal{A}$-eigenvalues equal to $-2$ and $-1$.
\end{prob}

For the anti-adjacency matrix, we expect more general results about the  {\rm HL}-index $R_{\mathcal{A}}(G)$ of a graph $G$.

\begin{prob}
Investigate the HL-index w.r.t. the anti-adjacency matrices of graphs.
\end{prob}

Another interesting problem is the {\it nullity} $\eta_A(G)$ of a graph $G$, which is defined to be the multiplicity of zero as an eigenvalue of the adjacency matrix.  Similarly, we consider the nullity $\eta_{\mathcal{A}}(G)$  of the anti-adjacency matrix. The nullity of a graph is important in mathematics, since it is related to the singularity of (anti-)adjacency matrix.  Note that the anti-adjacency matrices of graphs seem to be usually more sparse than adjacency matrices. Hence, the nullity of anti-adjacency matrix may be larger than that of adjacency matrix. For example, $\eta_A(P_{2k+1}) =1$ and $\eta_A(P_{2k}) = 0$ for $k \geq 1$; while $\eta_{\mathcal{A}}(P_{2k+1}) = 2k-3$ and $\eta_{\mathcal{A}}(P_{2k}) = 2k-4$ for $k \geq 3$ (see \cite[Lemma 2.1]{wang-lu-DM})

\begin{prob}
For the anti-adjacency matrix, give lower and upper bounds of nullity involving graph parameters, and characterize the extreme graphs.
\end{prob}

On the other hand, it is well-know that the HL-index and the nullity of graphs are of great interest in chemistry.  As a comparison in \cite{wang-li-CILS}, it seems that the spectral radius of adjacency matrix of a graph is closely related to the  chemical properties of octane isomers, while the spectral radius of anti-adjacency matrix of graphs may be more efficient for the benzenoid hydrocarbons. In the end,
we pose the last one problem to finish this paper.

\begin{prob}
With respect to the adjacency and anti-adjacency matrices of graphs, compare the HL-index and nullity of graphs
and study their applications in the chemistry.
\end{prob}

\section*{Acknowledgement}

The first author and the last two authors would like to thank to the Professor Willem H. Haemers for sharing his pearls of wisdom with them
during the course of this research and they are grateful for his comments on an earlier version of the manuscript.

We are also grateful to two anonymous peers for their many helpful comments and suggestions, which have considerably improved the presentation of the first draft.

\small{

}
\end{document}